\newtheorem{theorem}{Theorem}[section]
\newtheorem{lemma}[theorem]{Lemma}
\newtheorem{proposition}[theorem]{Proposition}
\theoremstyle{definition}
\newtheorem{definition}[theorem]{Definition}
\newtheorem{remark}[theorem]{Remark}
\newtheorem{assumption}[theorem]{Assumption}
\newtheorem*{notation}{Notation}
\newcommand{\norm}[1]{\left\lVert#1\right\rVert}
\newcommand{\R}{\mathbb{R}}
\newcommand{\N}{\mathbb{N}}
\newcommand{\E}{\mathcal{E}}
\newcommand{\rcal}{\mathcal{R}}
\newcommand{\e}{_{\varepsilon}}
\newcommand{\h}{_{\mathrm{hom}}}
\newcommand{\p}{_{\mathrm{pot}}}
\newcommand{\harpoon}{\rightharpoonup}
\newcommand{\pinv}{P_{{\mathrm{inv}}}} 
\newcommand{\expect}[1]{\left\langle {#1} \right\rangle}
\newcommand{\unf}{\mathcal{T}_{\varepsilon}} 
\newcommand{\brac}[1]{\left({#1}\right) } 
\newcommand{\cb}[1]{\left\lbrace {#1} \right\rbrace}
\newcommand{\wt}{\overset{2}{\rightharpoonup}} 
\newcommand{\st}{\overset{2}{\rightarrow}}
\title{Stochastic homogenization of $\Lambda$-convex gradient flows}
\author[1]{Martin Heida\thanks{martin.heida@wias-berlin.de}} 
\author[2]{Stefan Neukamm\thanks{stefan.neukamm@tu-dresden.de}}
\author[2]{Mario Varga\thanks{mario.varga@tu-dresden.de}}
\affil[1]{Weierstrass Institute for Applied Analysis and Stochastics, Berlin}
\affil[2]{Faculty of Mathematics, Technische Universit\"at Dresden}
\begin{document}
\maketitle

\centerline{\textit{This paper is dedicated to Alexander Mielke on the occasion of his 60th birthday.}}
\begin{abstract}
In this paper we present a stochastic homogenization result for a  class of Hilbert space evolutionary gradient systems driven by a quadratic dissipation potential and a $\Lambda$-convex energy functional featuring random and rapidly oscillating coefficients. Specific examples included in the result are Allen-Cahn type equations and evolutionary equations driven by the $p$-Laplace operator with $p\in (1,\infty)$. The homogenization procedure we apply is based on a stochastic two-scale convergence approach. In particular, we define a stochastic unfolding operator which can be considered as a random counterpart of the well-established notion of periodic unfolding. The stochastic unfolding procedure grants a very convenient method for homogenization problems defined in terms of ($\Lambda$-)convex functionals.

\noindent
{\bf Keywords:} Stochastic homogenization, stochastic unfolding, two-scale convergence, gradient system.
\end{abstract}
\section{Introduction}
Homogenization theory deals with the derivation of effective, macroscopic models for problems that involve two or more length (or time) scales. In stochastic homogenization the considered models are described in terms of coefficient fields that are randomly varying on a small scale, say $0<\varepsilon\ll 1$. A typical situation involves stationary random coefficient fields of the form $\R^d\ni x\mapsto a(\omega,\frac{x}{\varepsilon})=a_0(\tau_{\frac{x}{\varepsilon}}\omega)$ where $\omega\in \Omega$ stands for a ``random configuration'' and $a_0$ is defined on a probability space $(\Omega,\mathcal{F},P)$ that is equipped with a measure preserving action $\tau_{x}:\Omega\to \Omega$, see Section \ref{Intro} for the precise description of random coefficients.

In this paper we consider stochastic homogenization of gradient flows defined in terms of two integral functionals with random and rapidly-oscillating integrands---a quadratic \textit{dissipation} functional $\rcal\e: Y\to \R$ and a $\Lambda$-convex \textit{energy} functional $\E\e: Y\to \R\cup \cb{\infty}$. In particular, these functionals are defined on a state space $Y=L^2(\Omega\times Q)$ (the dual space is denoted by $Y^*$), where $Q\subset \R^d$ is open and bounded, and they admit the form
\begin{align*}
\rcal_{\varepsilon}(\dot{y}) & = \frac{1}{2}\int_{\Omega}\int_{Q}r(\tau_{\frac{x}{\varepsilon}}\omega,x)|\dot{y}(\omega,x)|^2 dx dP(\omega),\\
\mathcal{E}_{\varepsilon}(y) & = \int_{\Omega}\int_{Q} V(\tau_{\frac{x}{\varepsilon}}\omega,x, \nabla y(\omega,x))+ f(\tau_{\frac{x}{\varepsilon}}\omega,x,y(\omega,x))dx dP(\omega).
\end{align*}
Besides usual measurability statements, the main assumptions for $V(\omega,x,\cdot)$ are convexity and $p$-growth conditions with $p \in (1,\infty)$, and we assume that $f(\omega,x,\cdot)$ has $\theta$-growth with $\theta\in [2,\infty)$ and it is $\lambda$-convex, i.e., there exists $\lambda \in \R$ such that $f(\omega,x,\cdot)-\frac{\lambda}{2}|\cdot|^2$ is convex. The latter implies that $\E\e(\cdot)-\Lambda \rcal\e(\cdot)$ is convex for suitable $\Lambda\in \R$, i.e., $\E\e$ is $\Lambda$-convex w.r.t. $\rcal\e$. For the precise definitions and assumptions, see Section \ref{Intro}.

The evolution of the gradient flow is described by a state variable $y\in H^1(0,T;Y)$ and it is determined by the following differential inclusion
\begin{equation}\label{eq:145}
0 \in D\rcal_{\varepsilon}(\dot{y}(t))+ \partial_{F}\mathcal{E}_{\varepsilon}(y(t)) \quad \text{for a.e. }t\in (0,T), \quad y(0)=y^0\in Y. 
\end{equation}
Above, $\partial_F \E\e: Y \to 2^{Y^*}$ denotes the Frech{\'e}t subdifferential (see \cite{kruger2003frechet}), which is, in the specific case of a $\Lambda$-convex energy $\E\e$, given by: $\xi \in \partial_F \E\e(y)$ if
\begin{equation*}
\E\e(y)\leq \E\e(\widetilde{y})+\expect{\xi,y-\widetilde{y}}_{Y^*,Y}-\Lambda \rcal\e(\widetilde{y}-y) \quad \text{for all }\widetilde{y}\in Y.
\end{equation*} 
In this regard, the differential inclusion from \eqref{eq:145} is equivalent to the evolutionary variational inequality (\textit{EVI})
\begin{equation}\label{eq:569:p}
\expect{D\rcal\e(\dot{y}(t)),y(t)-\widetilde{y}}_{Y^*,Y}\leq \E\e(\widetilde{y})-\E\e(y(t))-\Lambda \rcal\e(y(t)-\widetilde{y}), \tag{$\mathrm{EVI}$}
\end{equation}
$\text{for all }\widetilde{y}\in Y$. We refer to the textbooks \cite{brezis1973ope,zeidler2013nonlinear,roubicek2013nonlinear,ambrosio2008gradient} for a general and detailed theory of gradient flows. In the simple case  $V(\omega,x,F)=A(\omega,x)F\cdot F$ and $f(\omega,x,\alpha)=\alpha^4 -\alpha^2$, \eqref{eq:145} corresponds to the weak formulation of an Allen-Cahn equation. Also, in the case that $V(\omega,x,F)=a(\omega,x)|F|^p$ with $p\in (1,\infty)$, the evolution is driven by the $p$-Laplace operator with oscillatory coefficients.

In the limit $\varepsilon\to 0$, we derive an effective gradient flow given in terms of a state space $Y_0= L^2_{\mathrm{inv}}(\Omega)\otimes L^2(Q)$ and homogenized functionals $\rcal\h: Y_0\to \R$, $\E\h: Y_0\to \R\cup \cb{\infty}$, see Section \ref{Intro} for the specific definitions. In particular, we obtain the following \textit{well-prepared E-convergence} statement for the limit $\varepsilon\to 0$: 
\begin{align*}
& \text{If }y\e(0)\to y(0) \quad \text{strongly in }Y, \quad \E\e(y\e(0)) \to \E\h(y(0)),\\
& \text{then for all }t\in [0,T], \quad y\e(t)\to y(t) \quad \text{strongly in }Y, \quad \E\e(y\e(t))\to \E\h(y(t)),
\end{align*}
where $y\e$ and $y$ denote the unique solutions to the gradient flows given in terms of $\brac{Y,\E\e,\rcal\e}$ and $\brac{Y_0,\E\h,\rcal\h}$, respectively (see Theorem \ref{s3_thm_5}).

The proof of this homogenization result relies on a general approach for asymptotic analysis of gradient flows and on the stochastic unfolding procedure, which we briefly explain in the following:

\textbf{General approach.} In the last decades, a number of general strategies for asymptotic analysis of sequences of abstract gradient systems were developed, we refer to \cite{mielke2016evolutionary} for a comprehensive overview. In particular, an early contribution in this field is obtained in \cite{attouch1978convergence,attouch1984variational}, where gradient flows on an abstract Hilbert space with fixed dissipation potential $\rcal\e=\rcal$ and convex energy functionals $\E\e$ are considered. In this setting, e.g., Mosco convergence $\E\e\overset{M}{\to}\E_0$ is sufficient to conclude well-prepared E-convergence. Novel strategies have been developed in  \cite{sandier2004gamma,serfaty2011gamma} and \cite{mielke2013nonsmooth}, which allow the treatment of very general problems with varying (nonquadratic, convex) dissipation potentials $\rcal\e$ and possibly nonconvex energy functionals $\E\e$. They are based on \textit{De Giorgi's $\brac{\rcal,\rcal^*}$} formulation (see, e.g., \cite[Introduction]{mielke2016evolutionary}). Also, using an integrated version of the \eqref{eq:569:p} formulation, in \cite{daneri2010lecture} a method for sequences with $\Lambda$-convex energies is proposed (see also \cite{mielke2014deriving}). In \cite{stefanelli2008brezis}, the \textit{Brezis-Ekeland-Nayroles principle} is utilized for the development of a procedure for E-convergence for convex dissipation and energy functionals. 

Many approaches for proving  $E$-convergence for problems  with \textit{nonconvex} energy functionals rely on the relative compactness in $Y$ of the energy ``sublevels'' $\cb{y\in Y:\E\e(y)\leq c, \; \forall \varepsilon}$ (or a similar strong-type compactness property). In our specific problem (which involves a nonconvex, $\Lambda$-convex energy functional) we only have compactness in weak topologies at our disposal. The lack of compactness in a strong topology is due to two reasons. The first reason comes from the fact that we consider convergence in the $L^2$-probability space: While in the deterministic periodic case (i.e., when $x\mapsto\tau_x\omega$ is periodic  almost surely), the compact embedding $H^{1}(Q)\subset \subset L^2(Q)$ yields strong compactness of the energy sublevels if $p=2$, in the general stochastic setting, the embedding of $L^2(\Omega)\otimes H^1(Q)$ into $L^2(\Omega\times Q)$ is not compact. The second reason is a possible mismatch between the growth of  $f$ and the growth control via $V$: If $p<2$ and $d$ is large, then even in the deterministic periodic case we are not able to obtain apriori strong $L^2$-type compactness. For this reason, we consider a modified approach that we briefly describe in the following and we refer to Sections~\ref{Intro}~and~\ref{Section_3.2} for details.

We define a new time-dependent energy functional $\widetilde{\E}\e:[0,T]\times Y\to \R\cup \cb{\infty}$,
\begin{equation*}
 \widetilde{\E}\e(t,u)=e^{2\Lambda t}\E\e(e^{-\Lambda t}u)-\Lambda \rcal\e(u),
\end{equation*}
for which $\widetilde{\E}\e(t,\cdot)$ is convex. If $y\e$ satisfies \eqref{eq:569:p} a.e., then using the Fenchel equivalence the new variable $u\e(t):=e^{\Lambda t}y\e(t)$ fulfills (cf. Lemma \ref{lem:709:p})
\begin{equation}\label{eq:190}
\expect{D\rcal\e(\dot{u}\e(t)),u\e(t)}_{Y^*,Y}+\widetilde{\E}\e(t,u\e(t))+\widetilde{\E}^*\e(t,-D\rcal\e( \dot{u}\e(t)))=0,
\end{equation}
where $\widetilde{\E}\e^*(t,\cdot)$ denotes the convex conjugate of $\widetilde{\E}\e(t,\cdot)$.
Using the chain rule and the quadratic structure of $\rcal\e$ in form of $(D\mathcal R_{\varepsilon})^*=D\mathcal R_{\varepsilon}$, we have $\frac{d}{dt}\rcal\e(u\e(t))=\expect{D\rcal\e(u\e(t)),\dot{u}\e(t)}_{Y^*,Y}=\expect{D\rcal\e(\dot{u}\e(t)),u\e(t)}_{Y^*,Y}$. Hence, an integration of \eqref{eq:190} over $(0,T)$ yields
\begin{equation}\label{eq:509:p}
\rcal\e(u\e(T))+\int_{0}^T \widetilde{\E}\e(t,u\e(t))+\widetilde{\E}^*\e(t,-D\rcal\e(\dot{u}\e(t)))dt = \rcal\e(u\e(0)).
\end{equation}
This formulation is equivalent to \eqref{eq:569:p} and it is convenient for passing to the limit $\varepsilon\to 0$ by only using  weak convergence of the solution $y\e$ (resp. $u\e$). In fact, \eqref{eq:509:p} is the analogue of the formulation used in the general convex case in \cite{attouch1978convergence,attouch1984variational} with the difference that in our case the energy functionals are time dependent and that the dissipation functionals feature oscillations on scale $\varepsilon$. 

\textbf{Stochastic unfolding.} In order to conduct the limit passage $\varepsilon\to 0$ in \eqref{eq:509:p}, we are required to treat objects with random and rapidly oscillating coefficients. For this task, we introduce the \textit{stochastic unfolding method} that allows a straightforward analysis and it presents a random counterpart of the well-established periodic unfolding method. 

The notion of periodic two-scale convergence \cite{nguetseng1989general,allaire1992homogenization} (see also \cite{lukkassen2002two}) and the periodic unfolding procedure \cite{cioranescu2002periodic} (see also \cite{cioranescu2008periodic,Visintin2006,mielke2007two}) are prominent and useful tools in multiscale modeling and homogenization suited for problems involving periodic coefficients. We refer to some of the many problems treated using these methods \cite{lukkassen2002two,cioranescu2004homogenization,griso2004error,
mielke2007two,neukamm2010homogenization,mielke2014two,
liero2015homogenization,hanke2017phase}. 
In the stochastic setting, the notion of two-scale convergence is generalized in \cite{bourgeat1994stochastic} (see also \cite{andrews1998stochastic,sango2011stochastic}) and in \cite{zhikov2006homogenization} (see also \cite{mathieu2007quenched,faggionato2008random,heida2011extension}). Yet, as far as we know, the concept of unfolding has not been investigated earlier in the stochastic case. 

We extend the idea of the periodic unfolding procedure to the stochastic case. Namely, we introduce a linear isometric operator, the \textit{stochastic unfolding operator}, that enjoys many similarities to the periodic unfolding operator. Also, similarly as in the periodic case, stochastic two-scale convergence in the mean from \cite{bourgeat1994stochastic} might be equivalently characterized as weak convergence of the unfolded sequence. In this respect, we develop a general procedure for stochastic homogenization problems, see also \cite{varga2019stochastic} for a detailed analysis of this method, and \cite{nvw2019} for an extension to abstract, linear evolution systems in an operator theoretic framework.
Stochastic unfolding has first been introduced by the second and third author in a discrete version in \cite{neukamm2017stochastic} where the discrete-to-continuum limit of a rate-independent evolution is analyzed.
\medskip

\noindent\textbf{Related results.} In the periodic setting homogenization results of this type are obtained for quasilinear parabolic equations, e.g., in \cite{neuss2007effective,woukeng2010periodic,fatima2012unfolding} (via two-scale convergence and unfolding), for reaction-diffusion systems with different diffusion length scales in \cite{mielke2014two} (via unfolding), for Cahn-Hilliard type gradient flows in \cite{liero2015homogenization} (via unfolding). In the stochastic case, parabolic type equations are treated in \cite{zhikov1982averaging,delarue2009stochastic,heida2012stochastic,efendiev2005homogenization}. However, the approach we consider is different, it relies on the more general gradient flow formulation and we do not rely on differentiability of the integrands $V$ and $f$ and on continuity assumptions on their derivatives.

\medskip  
\noindent\textbf{Structure of the paper.} In Section \ref{Intro} we present the main stochastic homogenization result of this paper. Section \ref{S_Stoch} is dedicated to the introduction of the stochastic unfolding procedure. In Section \ref{Section_3.2} we present the proof of the main Theorem \ref{s3_thm_5}.

\begin{notation} $(\Omega,\mathcal{F},P)$ denotes a complete and separable probability space, the corresponding mathematical expectation is denoted by $\expect{\cdot}=\int_{\Omega} \cdot dP(\omega)$. For $Q\subset \R^d$ open, we denote by $\mathcal{L}(Q)$ the Lebesgue $\sigma$-algebra. For a Banach space $X$, its dual space is denoted by $X^*$ and the Borel $\sigma$-algebra on $X$ is given by $\mathcal{B}(X)$. For $p\in (1,\infty)$, $L^p(\Omega)$ and $L^p(Q)$ are the usual Banach spaces of $p$-integrable functions defined on $(\Omega,\mathcal F,P)$ and $Q$, respectively. We introduce function spaces for functions defined on $\Omega\times Q$ as follows: For closed subspaces $X\subset L^p(\Omega)$ and $Z\subset L^p(Q)$, we denote by $X\otimes Z$ the closure of $$X\overset{a}{\otimes}Z:=\cb{\sum_{i=1}^{n}\varphi_i \eta_i:  \varphi_i \in X, \eta_i\in Z, n\in \mathbb{N}}$$ in $L^p(\Omega\times Q)$. Note that in the case $X=L^p(\Omega)$ and $Z=L^p(Q)$, we have $X\otimes Z = L^p(\Omega\times Q)$. Up to isometric isomorphisms, we may identify $L^p(\Omega\times Q)$ with the Bochner spaces $L^p(\Omega;L^p(Q))$ and $L^p(Q;L^p(\Omega))$. Slightly abusing the notation, for closed subspaces $X\subset L^p(\Omega)$ and $Z\subset W^{1,p}(Q)$, we denote by $X \otimes Z$ the closure of
$$X\overset{a}{\otimes}Z:=\cb{\sum_{i=1}^{n}\varphi_i \eta_i:  \varphi_i \in X, \eta_i\in Z, n\in \mathbb{N}}$$
in $L^p(\Omega;W^{1,p}(Q))$. In this regard, we may identify $u\in L^p(\Omega)\otimes W^{1,p}(Q)$ with the pair $(u,\nabla u)\in L^p(\Omega\times Q)^{1+d}$. We mostly focus on the space $L^p(\Omega\times Q)$ and the above notation is convenient for keeping track of its various subspaces.
\end{notation}
\section{Homogenization of gradient flows}\label{Intro} First, we briefly recall the standard functional analytic setting for stochastic homogenization introduced by Papanicolaou and Varadhan in \cite{Papanicolaou1979} (see also \cite{jikov2012homogenization}). In the second part of this section we present the main homogenization result.
\begin{assumption} \label{Assumption_2_1}
Let $\brac{\Omega,\mathcal{F},P}$ be a complete and separable probability space. Let $\tau=\cb{\tau_x}_{x\in \R^{d}}$ denote a group of invertible measurable mappings $\tau_x:\Omega\to \Omega$ such that:
\begin{enumerate}[label=(\roman*)]
\item (Group property). $\tau_0=Id$ and $\tau_{x+y}=\tau_x\circ \tau_y$ for all $x,y\in \R^{d}$.
\item (Measure preservation). $P(\tau_x E)=P(E)$ for all $E\in \mathcal{F}$ and $x\in \R^{d}$.
\item (Measurability). $(\omega,x)\mapsto \tau_{x}\omega$ is $\brac{\mathcal{F}\otimes \mathcal{L}(\R^d),\mathcal{F}}$-measurable.
\end{enumerate}
\end{assumption}
Throughout the paper we assume that $(\Omega,\mathcal F,P,\tau)$ satisfies Assumption \ref{Assumption_2_1}.
The separability assumption on the measure space implies that $L^p(\Omega)$ is separable. We say that $(\Omega,\mathcal F,P,\tau)$ is \textit{ergodic} ($\expect{\cdot}$ is ergodic), if
\begin{align*}
  \text{ every shift invariant }E\in \mathcal{F} \text{ (i.e.,~}\tau_x E=E \text{ for all }x\in \R^d)\text{ satisfies } P(E)\in \cb{0,1}. 
\end{align*}

We introduce two auxiliary subspaces of $L^p(\Omega)$ that are important for the homogenization procedure. We consider the group of isometric operators $\cb{U_x}_{x\in \R^{d}}$,  $U_x:L^p(\Omega)\to L^p(\Omega)$ defined by $U_x \varphi(\omega)=\varphi(\tau_{x}\omega)$. This group is strongly continuous (see \cite[Section 7.1]{jikov2012homogenization}). For $i=1,...,d$, we consider the one-parameter group of operators $\cb{U_{h e_i}}_{h\in \R}$ ($\cb{e_i}$ being the usual basis of $\R^d$) and its infinitesimal generator $D_i:\mathcal{D}_i\subset L^p(\Omega)\rightarrow L^p(\Omega)$,
\begin{equation*}
D_i \varphi=\lim_{h\rightarrow 0} \frac{U_{he_i}\varphi-\varphi}{h},
\end{equation*}
which we refer to as the \textit{stochastic} derivative. $D_i$ is a linear and closed operator and its domain $\mathcal{D}_i$ is dense in $L^p(\Omega)$. We set $W^{1,p}(\Omega)=\cap_{i=1}^{d}\mathcal{D}_i$ and define for $\varphi\in W^{1,p}(\Omega)$ the \textit{stochastic gradient} as $D\varphi=(D_1 \varphi,...,D_d \varphi)$. In this manner, we obtain  a linear, closed and densely defined operator $D:W^{1,p}(\Omega)\rightarrow L^p(\Omega)^d$, and we denote by
\begin{equation*}
L^p_{\mathrm{pot}}(\Omega):=\overline{\mathrm{ran}(D)}\subset L^p(\Omega)^d
\end{equation*}
the closure of the range of $D$ in $L^p(\Omega)^d$. We denote the adjoint of $D$ by $D^*:\mathcal{D}^*\subset{L^q(\Omega)^d}\rightarrow L^q(\Omega)$ which is a linear, closed and densely defined operator, $\mathcal{D}^*$ denotes the domain of $D^*$ and $q=\frac{p}{p-1}$. 
Note that $W^{1,q}(\Omega)^d\subset \mathcal{D}^*$ and for all $\varphi\in W^{1,p}(\Omega)$ and $\psi\in W^{1,q}(\Omega)$ we have the integration by parts formula, $i=1,...,d$,
\begin{equation*}
\expect{ \psi D_i \varphi}=-\expect{\varphi D_i \psi},
\end{equation*}
and thus $D^*\psi=-\sum_{i=1}^d D_i \psi_i$ for $\psi\in W^{1,q}(\Omega)^d$. We define the subspace of \textit{shift-invariant functions} in $L^p(\Omega)$ by 
\begin{equation*}
L^p_{{\mathrm{inv}}}(\Omega)=\cb{\varphi\in L^p(\Omega):U_x \varphi=\varphi \quad \text{for all }x \in \R^{d}},
\end{equation*}
and denote by $ P_{\mathrm{inv}}: L^p(\Omega) \to L^p_{\mathrm{inv}}(\Omega)$ the conditional expectation with respect to the $\sigma$-algebra of shift invariant sets $\cb{ E \in \mathcal{F} : \tau_x E = E \text{ for all } x\in \R^d}$. $P_{\mathrm{inv}}$ is a contractive projection and for $p=2$ it coincides with the orthogonal projection onto $L^2_{\mathrm{inv}}(\Omega)$. Also, if $\expect{\cdot}$ is ergodic, then it holds $L^p_{\mathrm{inv}}(\Omega)\simeq \R$ and $P_{\mathrm{inv}}\varphi = \expect{\varphi}$.
\smallskip

\noindent \textbf{Heterogeneous system.} 
Let $Q\subset \R^d$ be open and bounded. Let $p\in (1,\infty)$ and $\theta\in [2,\infty)$.
The system that we consider is defined on a state space

$$Y = L^2(\Omega \times Q).$$
The dissipation functional is given by $\mathcal{R}_{\varepsilon}: Y\to [0,\infty)$,
\begin{equation*}
\mathcal{R}_{\varepsilon}(\dot{y})=\frac{1}{2}\expect{\int_{Q}r(\tau_{\frac{x}{\varepsilon}}\omega,x)|\dot{y}(\omega,x)|^2 dx}.
\end{equation*}
The energy functional $\E_{\varepsilon}:Y\to \R\cup \cb{\infty}$ is defined as
\begin{equation*}
\E_{\varepsilon}(y)=\expect{\int_{Q} V(\tau_{\frac{x}{\varepsilon}}\omega,x, \nabla y(\omega,x))+f(\tau_{\frac{x}{\varepsilon}}\omega,x,y(\omega,x))dx},
\end{equation*}
for $y \in (L^p(\Omega)\otimes W^{1,p}_0(Q))\cap L^{\theta}(\Omega \times Q)=:\mathrm{dom}(\mathcal{E}\e)$ and $\E_{\varepsilon} = \infty$ otherwise. Above, $r:\Omega \times Q \to \R$, $V: \Omega\times Q \times \R^d\to \R$ and $f:\Omega\times Q \times \R \to \R$ and we consider the following assumptions: There exists $c>0$ such that:
\begin{enumerate}[label=(A\arabic*)]
\item \label{C1:ass:a} $r$ is $\mathcal{F}\otimes \mathcal{L}(Q)$-measurable and for a.e. $(\omega,x)\in \Omega\times Q$, we have $\frac{1}{c}\leq r(\omega,x)\leq c$.
\item \label{C2:ass:a} $V(\cdot,\cdot, F)$ is $\mathcal{F}\otimes \mathcal{L}(Q)$-measurable for all $F\in \R^{d}$, $V(\omega, x, \cdot)$ is convex for a.e. $(\omega,x)\in \Omega\times Q$ and
\begin{equation}\label{eq:271}
\frac{1}{c}|F|^p-c\leq V(\omega, x, F) \leq c(|F|^p+1)
\end{equation}
for a.e. $(\omega,x) \in \Omega\times Q$ and all $F\in \R^{d}$.
\item \label{C3:ass:a} $f(\cdot,\cdot,\alpha)$ is $\mathcal{F}\otimes \mathcal{L}(Q)$-measurable for all $\alpha \in \R$. There exists $\lambda \in \R$ such that for a.e. $(\omega,x) \in \Omega\times Q$ 
\begin{align}\label{eq:276}
& f(\omega,x, \cdot) \text{ is }\lambda\text{-convex, i.e., } \quad \alpha \mapsto f(\omega,x, \alpha)-\frac{\lambda}{2} |\alpha|^2 \text{ is convex},\nonumber \\
&\frac{1}{c}|\alpha|^{\theta} - c \leq f(\omega,x,\alpha)\leq c(|\alpha|^{\theta}+1) \quad \text{for all }\alpha \in \mathbb{R}. 
\end{align}
\end{enumerate}
We remark that the above assumptions imply that there exists $\Lambda\in \R$ such that $y\mapsto \E_{\varepsilon}(y)-\Lambda \mathcal{R}_{\varepsilon}(y)$ is convex, i.e. $\E\e$ is $\Lambda$-convex w.r.t. $\rcal\e$. In particular, if $\lambda<0$, then we set $\Lambda=\lambda c$, and in the case $\lambda\geq 0$, $\Lambda= \frac{\lambda}{c}$. 

Let $T>0$ be a finite time horizon. We consider the evolutionary variational inequality (EVI) formulation of the gradient flow $\brac{Y,\E\e,\rcal\e}$: Find $y\in H^1(0,T;Y)$ such that for a.e. $t\in (0,T)$,
\begin{equation}\label{eq:317}
\expect{D\rcal\e(\dot{y}(t)),y(t)-\widetilde{y}}_{Y^*,Y}\leq \E\e(\widetilde{y})-\E\e(y(t))-\Lambda \rcal\e(y(t)-\widetilde{y}) \quad \text{for all }\widetilde{y}\in Y.
\end{equation}

\begin{remark}[Existence and uniqueness]
Assumptions \ref{C1:ass:a}-\ref{C3:ass:a} imply that $\E\e$ is proper, l.s.c., coercive and $\Lambda$-convex w.r.t. $\rcal\e$. In this respect, the classical theory of maximal monotone operators with Lipschitz perturbations implies that for an initial datum $y^0\in \mathrm{dom}(\E\e)$, there exists a unique $y\in H^1(0,T;Y)$ which satisfies \eqref{eq:317} and $y(0)=y^0$, see \cite{brezis1973ope,barbu2010nonlinear}, where the \textit{Yosida regularization} technique is used for the proof of this result. In view of the continuous embedding $H^1(0,T;Y)\subset C([0,T],Y)$, we identify functions in $H^1(0,T;Y)$ by their continuous representatives. Moreover, the following standard apriori estimate holds 
\begin{equation}\label{eq:323}
\int_{0}^t \rcal\e(\dot{y}(s))ds \leq \E\e(y^0)-\E\e(y(t)) \quad  \text{for all }t \in [0,T],
\end{equation} 
which follows by testing \eqref{eq:145} with $\dot{y}(s)$ and by the chain rule for the $\Lambda$-convex functional $\E\e$. \eqref{eq:323} in combination with the growth conditions \eqref{eq:271} and \eqref{eq:276} yields
\begin{equation}\label{eq:295}
\norm{y(t)}^p_{L^p(\Omega)\otimes W^{1,p}_0(Q)}+\norm{y(t)}^{\theta}_{L^{\theta}(\Omega\times Q)} \leq c \brac{\E\e(y^0)+2c}. 
\end{equation} 
\end{remark}
\medskip

\noindent \textbf{Effective system.} In the limit $\varepsilon\to 0$, we derive an effective gradient flow which is described as follows. The state space is given by
$$Y_0=L^2_{\mathrm{inv}}(\Omega)\otimes L^2(Q).$$
The effective dissipation potential is given by $\mathcal{R}_{\mathrm{hom}}: Y_0\to [0,\infty)$,
\begin{equation*}
\mathcal{R}_{\mathrm{hom}}(\dot{y})=\expect{\int_{Q}r(\omega,x) |\dot{y}(\omega,x)|^2 dx}.
\end{equation*}
The energy functional is $\E_{\mathrm{hom}}: Y_{0} \to \R\cup \cb{\infty}$, 
\begin{align}\label{equation_442}
\begin{split}
\E_{\mathrm{hom}}(y)  =  & \inf_{\chi \in L^p_{\mathrm{pot}}(\Omega)\otimes L^p(Q)}\expect{\int_{Q} V\brac{\omega,x,\nabla y(\omega,x)+\chi(\omega,x)}dx}\\ & +\expect{\int_{Q}   f(\omega,x,y(\omega,x))dx} 
\end{split}
\end{align}
for $y \in (L^p_{\mathrm{inv}}(\Omega)\otimes W^{1,p}_0(Q))\cap \brac{L^{\theta}_{\mathrm{inv}}(\Omega)\otimes L^{\theta}(Q)}=: \mathrm{dom}(\E_{\mathrm{hom}})$ and $\E_{\mathrm{hom}}=\infty$ otherwise. We remark that $\E\h(\cdot)-\Lambda \rcal\h(\cdot)$ is convex with the same $\Lambda\in \R$ as for $\E\e$. 

The gradient flow $\brac{Y_0,\E\h,\rcal\h}$ in the EVI formulation also admits a unique solution, i.e., for an initial datum $y^0\in \mathrm{dom}(\E\h)$, there exists a unique $y\in H^1(0,T; Y_0)$ such that $y(0)=y^0$ and for a.e. $t\in (0,T)$,
\begin{equation}\label{eq:318}
\expect{D\rcal\h(\dot{y}(t)),y(t)-\widetilde{y}}_{Y^*_0,Y_0}\leq \E\h(\widetilde{y})-\E\h(y(t))-\Lambda \rcal\h(y(t)-\widetilde{y}), 
\end{equation}
$\text{for all }\widetilde{y}\in Y_0$.

The main result of this paper is the following homogenization theorem. In particular, the proof relies on the modified abstract strategy discussed in the introduction and on the stochastic unfolding procedure that is explained in Section \ref{S_Stoch}.
\begin{theorem}[Homogenization]\label{s3_thm_5} Let $p\in (1,\infty)$, $\theta \in [2,\infty)$ and $Q\subset \R^d$ be open and bounded. Assume \ref{C1:ass:a}-\ref{C3:ass:a}, and consider $y^0\in \mathrm{dom}(\E_{\mathrm{hom}})$, $y^0_{\varepsilon} \in \mathrm{dom}(\E\e)$ such that, as $\varepsilon\to 0$,
\begin{equation*}
y_{\varepsilon}^0 \to y^0 \quad\text{strongly in }Y, \quad \limsup_{\varepsilon\to 0}\E_{\varepsilon}(y^0_{\varepsilon}) < \infty.  
\end{equation*}
Let $y\e\in H^1(0,T; Y)$ be the unique solution to the EVI \eqref{eq:317} with $y\e(0)=y\e^0$. Then, for all $t\in (0,T]$, as $\varepsilon \to 0$, 
\begin{align*}
& y\e(t) \to y(t) \quad \text{strongly in }Y,
\end{align*}
where $y\in H^1(0,T; Y_0)$ is the unique solution to the EVI \eqref{eq:318} with $y(0)=y^0$. Moreover, if we additionally assume that $\E_{\varepsilon}(y^{0}\e)\to \E_{\mathrm{hom}}(y^0)$, then it holds that $\dot{y}\e \to \dot{y}$ strongly in $L^2(0,T; Y)$ and $\E_{\varepsilon}(y\e(t))\to \E_{\mathrm{hom}}(y(t))$ for all $t\in [0,T]$.

\noindent
\textit{(For the proof see Section \ref{Section_3.2}.)}
\end{theorem}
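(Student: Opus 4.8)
The plan is to follow the modified abstract strategy outlined in the introduction, working with the convexified, time-dependent energy $\widetilde{\E}\e(t,\cdot)$ and the variable $u\e(t)=e^{\Lambda t}y\e(t)$, and to pass to the limit in the convex formulation \eqref{eq:509:p} using stochastic unfolding. First I would record the a priori estimates: from \eqref{eq:323} and the growth bounds \eqref{eq:271}, \eqref{eq:276} together with $\limsup_\varepsilon \E\e(y\e^0)<\infty$, I obtain that $y\e$ is bounded in $L^\infty(0,T;(L^p(\Omega)\otimes W^{1,p}_0(Q))\cap L^\theta(\Omega\times Q))$ and $\dot y\e$ is bounded in $L^2(0,T;Y)$ (using coercivity of $\rcal\e$ via \ref{C1:ass:a}). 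Hence, passing to a subsequence, $y\e \rightharpoonup y$ weakly in $H^1(0,T;Y)$, and by the Aubin--Lions-type compactness in the $t$-variable (only in time, since we have no spatial compactness) $y\e(t)\rightharpoonup y(t)$ weakly in $Y$ for every $t$. Via the stochastic unfolding operator $\unf$, the bound on $\nabla y\e$ gives, after unfolding, a two-scale limit: $\unf \nabla y\e \rightharpoonup \nabla y + \chi$ weakly in $L^p(\Omega\times Q)^d$ with $\nabla y$ the weak limit of $\mathbb E[\unf\nabla y\e]$ living in $L^p_{\mathrm{inv}}(\Omega)\otimes W^{1,p}_0(Q)$ and corrector $\chi\in L^p_{\mathrm{pot}}(\Omega)\otimes L^p(Q)$; similarly $\unf y\e \to y$ strongly (the unfolded zeroth-order term has no oscillation to exploit, and $y\in Y_0$ by the invariance of the limit).

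The core of the argument is the liminf/limsup analysis of \eqref{eq:509:p}. For the two nonnegative terms $\int_0^T \widetilde{\E}\e(t,u\e(t))\,dt$ and $\int_0^T \widetilde{\E}^*\e(t,-D\rcal\e(\dot u\e(t)))\,dt$ I would establish a joint lower semicontinuity bound: the energy part is handled by writing $\widetilde{\E}\e$ in terms of the unfolded functionals and using convexity of $V(\omega,x,\cdot)$ and $\lambda$-convexity of $f$ plus Ioffe/Fatou-type lower semicontinuity under two-scale (weak-unfolded) convergence, which yields $\liminf_\varepsilon \int_0^T\widetilde{\E}\e(t,u\e(t))\,dt \ge \int_0^T \widetilde{\E}\h(t,u(t))\,dt$; the conjugate part is treated dually, using that $\widetilde{\E}\e^*$ unfolds to a functional whose limit is bounded below by $\widetilde{\E}\h^*$ (here the inf over correctors in the definition \eqref{equation_442} of $\E\h$ is exactly what the cell-problem/two-scale structure produces on the dual side). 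Combined with the quadratic structure $(D\rcal\e)^*=D\rcal\e$, weak lower semicontinuity of $\rcal\e(u\e(T))$ after unfolding ($\rcal\e$ unfolds to a functional converging to $\rcal\h$ by continuity of $r$ composed with $\unf$ and strong-weak lower semicontinuity), and the convergence $\rcal\e(u\e(0))\to\rcal\h(u(0))$ coming from $y\e^0\to y^0$ strongly together with $\limsup \E\e(y\e^0)<\infty$, I pass to the limit in \eqref{eq:509:p} to obtain
\begin{equation*}
\rcal\h(u(T))+\int_0^T \widetilde{\E}\h(t,u(t))+\widetilde{\E}^*\h(t,-D\rcal\h(\dot u(t)))\,dt \le \rcal\h(u(0)).
\end{equation*}
By the Fenchel--Young inequality the reverse inequality always holds, so this is an equality, which by the equivalence cited in Lemma~\ref{lem:709:p} (run in reverse) means $u$, hence $y(t)=e^{-\Lambda t}u(t)$, solves the limit EVI \eqref{eq:318} with $y(0)=y^0$; uniqueness for $\brac{Y_0,\E\h,\rcal\h}$ then upgrades the subsequential convergence to convergence of the whole sequence.

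It remains to upgrade weak to strong convergence in $Y$ and to obtain energy convergence. For this I would exploit that all the liminf inequalities above must be equalities in the limit (a standard consequence of having matched the limit identity), which forces $\int_0^T\rcal\e(\dot u\e(t))\,dt \to \int_0^T\rcal\h(\dot u(t))\,dt$ and, combined with \ref{C1:ass:a} and the quadratic structure, strong convergence $\dot u\e\to\dot u$ in $L^2(0,T;Y)$ once the additional well-preparedness hypothesis $\E\e(y\e^0)\to\E\h(y^0)$ is assumed (which pins the initial dissipation/energy split); then $y\e(t)-y\e^0=\int_0^t\dot y\e$ converges strongly in $Y$ for each $t$, giving $y\e(t)\to y(t)$ strongly, and feeding this back into the (now equality) energy balance yields $\E\e(y\e(t))\to\E\h(y(t))$. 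The main obstacle I anticipate is the lower semicontinuity of the \emph{dual} term $\int_0^T\widetilde{\E}^*\e(t,-D\rcal\e(\dot u\e(t)))\,dt$ under only weak two-scale convergence of $\dot u\e$: one must show that the unfolded conjugate functionals $\Gamma$-converge (or at least $\liminf$-converge) to $\widetilde{\E}^*\h$, and this is precisely where the corrector structure of $\E\h$ enters nontrivially — it requires recognizing that minimizing over $\chi\in L^p_{\mathrm{pot}}(\Omega)\otimes L^p(Q)$ in \eqref{equation_442} is dual to restricting the test functions in the conjugate to the ``solenoidal'' complement, so that the two-scale liminf of the primal part and the two-scale recovery on the dual part are compatible. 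Handling the mismatch between the $p$-growth of $V$ and the $\theta$-growth of $f$ (the $p<2$ issue flagged in the introduction) is folded into keeping all estimates in the correct product spaces and never invoking spatial compactness.
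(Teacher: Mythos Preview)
Your overall architecture matches the paper's: convex reduction via $u\e=e^{\Lambda t}y\e$, compactness, liminf inequalities for each term of \eqref{eq:509:p}, Fenchel--Young to close, then upgrade. Two points need correcting.

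\smallskip
\textbf{Strong convergence of $y\e(t)$.} This is where your argument has a genuine gap. The theorem asserts $y\e(t)\to y(t)$ strongly in $Y$ \emph{without} the extra hypothesis $\E\e(y\e^0)\to\E\h(y^0)$; that hypothesis is only used for the second assertion about $\dot y\e$ and energy convergence. Your route --- obtain strong convergence of $\dot u\e$ in $L^2(0,T;Y)$ and then integrate --- explicitly invokes the well-preparedness assumption, so it cannot deliver the first conclusion. Moreover, the quantity $\int_0^T\rcal\e(\dot u\e)\,dt$ does not appear in \eqref{eq:509:p}, so the ``all liminfs become equalities'' mechanism does not force its convergence. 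What it \emph{does} force is $\rcal\e(u\e(T))\to\rcal\h(u(T))$, i.e.\ convergence of the weighted $L^2$-norm of $\unf u\e(T)$; combined with the already-established weak convergence $\unf u\e(T)\rightharpoonup u(T)$ this yields $\unf u\e(T)\to u(T)$ strongly, and since $u(T)$ is shift-invariant, $u\e(T)\to u(T)$ strongly in $Y$. Replacing $T$ by any $t\in(0,T]$ gives the claim. This is the correct mechanism and does not need energy well-preparedness.

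Relatedly, your earlier statement that ``$\unf y\e\to y$ strongly'' at the compactness stage is not available --- at that point you only have weak two-scale convergence of $y\e$ (and of $\nabla y\e$). Strong convergence is an \emph{output} of the argument, not an input.

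\smallskip
\textbf{The conjugate term.} You correctly flag $\liminf_\varepsilon\int_0^T\widetilde{\E}\e^*(t,-D\rcal\e(\dot u\e))\,dt\ge\int_0^T\widetilde{\E}\h^*(t,-D\rcal\h(\dot u))\,dt$ as the main difficulty, but your proposed mechanism (``$\Gamma$-convergence of the unfolded conjugates'' / duality with a solenoidal complement) is too vague to be a proof. The paper's argument is concrete and primal: use a measurable selection (Lemma~\ref{lem:128:a}) to find $w(t)$ and a corrector $\chi(t)$ realizing $\widetilde{\E}\h^*(t,-D\rcal\h(\dot u(t)))=\langle -D\rcal\h(\dot u(t)),w(t)\rangle-\widetilde{\E}\h(t,w(t))$, then build a strong two-scale recovery sequence $w\e$ for the pair $(w,\chi)$ (Lemma~\ref{lem:99:a2}), and finally use the definition of the conjugate pointwise, $\widetilde{\E}\e^*(t,-D\rcal\e(\dot u\e))\ge\langle -D\rcal\e(\dot u\e),w\e\rangle-\widetilde{\E}\e(t,w\e)$, whose right-hand side converges because it pairs a weakly convergent with a strongly two-scale convergent sequence. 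This is where the infimum over $L^p\p(\Omega)\otimes L^p(Q)$ in $\E\h$ actually enters: it is matched by the recovery construction for the gradient corrector, not by any dual ``solenoidal'' restriction.
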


\begin{remark}[Convergence of gradients]
We remark that in the proof we additionally show that $y\e(t)\overset{2}{\harpoon}y(t)$ in $L^{\theta}(\Omega\times Q)$ and in $L^p(\Omega\times Q)$, where ``$\overset{2}{\harpoon}$'' is weak stochastic two-scale convergence in the mean defined in Definition \ref{def46}. Also, it holds $P_{\mathrm{inv}}\nabla y\e(t)\harpoon \nabla y(t)$ weakly in $L^p(\Omega\times Q)^d$. If we additionally assume that $V(\omega,x,\cdot)$ is strictly convex, we may obtain that for all $t\in (0,T]$ it holds
\begin{equation*}
\nabla y\e(t)\overset{2}{\harpoon} \nabla y(t)+\chi(t) \quad \text{in }L^p(\Omega\times Q)^d,
\end{equation*}
where $\chi(t)\in L^p_{\mathrm{pot}}(\Omega)\otimes L^p(Q)$ is the unique minimizer in the corrector problem 
\begin{equation*}
\inf_{\chi\in L^p\p(\Omega)\otimes L^p(Q)}\expect{\int_{Q}V(\omega,x,\nabla y(t,\omega,x)+\chi(\omega,x))dx}.
\end{equation*}
\end{remark}
\begin{remark}[Ergodic case]\label{rem:82:c}
If we additionally assume that $\expect{\cdot}$ is ergodic, the limit system is driven by deterministic functionals. In particular, the state space reduces to $Y_0=L^2(Q)$. The dissipation potential is given by
\begin{equation*}
{\mathcal{R}}_{\mathrm{hom}}(\dot{y})=\int_Q r\h(x)|\dot{y}(x)|^2 dx, 
\end{equation*}
where $r\h(x)=\expect{r(\omega,x)}$. The energy functional boils down to 
\begin{equation*}
\E_{\mathrm{hom}}(y)=\int_{Q} V_{\mathrm{hom}}\brac{x,\nabla y(x)}+ f_{\mathrm{hom}}(x,y(x)) dx
\end{equation*}
in $W^{1,p}_0(Q)\cap L^{\theta}(Q)$ and otherwise $\infty$. Above, $f_{\mathrm{hom}}(x,\alpha)=\expect{f(\omega,x,\alpha)}$ for $x\in Q$ and $\alpha \in \R$, and  $V_{\mathrm{hom}}(x,F)=\inf_{\chi \in L^p_{\mathrm{pot}}(\Omega)}\expect{V(x,\omega,F+\chi(\omega))}$ for $x\in Q$, $F\in \R^d$. Moreover, $V\h$ satisfies analogous $p$-growth conditions as $V$. The identification of $\E\h$ can be obtained by a measurable selection argument from Remark \ref{rem:901:a} (cf. proof of Lemma \ref{lem:128:a}).
\end{remark}

\section{Stochastic unfolding method}\label{S_Stoch}
In this section we introduce the stochastic unfolding method. In particular, in Section \ref{S_Stoch_1} we define the unfolding operator and present its main properties. In Section \ref{S_Two} we obtain weak two-scale type compactness statements and we construct suitable recovery sequences. To keep the exposition simple, the proofs are presented in the end, in Section \ref{S_Proofs}.
\subsection{Stochastic unfolding operator and two-scale convergence in the mean}\label{S_Stoch_1}

\begin{lemma}\label{L:unf}
Let $\varepsilon>0$, $p\in (1,\infty)$, $q=\frac{p}{p-1}$, and $Q\subset\R^d$ be open. There exists a unique linear isometric isomorphism
  \begin{equation*}
    \unf: L^p(\Omega \times Q) \rightarrow L^p(\Omega \times Q)
  \end{equation*}
  which satisfies
  \begin{equation*}
    \text{for all } u\in L^p(\Omega)\overset{a}{\otimes} L^p(Q),  \qquad (\unf u)(\omega,x)=u(\tau_{-\frac{x}{\varepsilon}}\omega,x)\qquad \text{a.e. in }\Omega\times Q.
  \end{equation*}
  Moreover, its adjoint is the unique linear isometric isomorphism $\unf^{*}:L^q(\Omega \times Q) \to L^q(\Omega \times Q)$ that satisfies for all $u\in L^q(\Omega)\overset{a}{\otimes}L^q(Q)$, $(\unf^{*}u)(\omega,x)=u(\tau_{\frac{x}{\varepsilon}}\omega,x)$ a.e.~in $\Omega\times Q$.

\noindent(For the proof see Section \ref{S_Proofs}.)
\end{lemma}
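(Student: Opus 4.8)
The plan is to construct $\unf$ first on the algebraic tensor product $L^p(\Omega)\overset{a}{\otimes}L^p(Q)$ via the explicit pointwise formula, and then to extend it by density. Concretely, for $u=\sum_{i=1}^n\varphi_i\eta_i$ with $\varphi_i\in L^p(\Omega)$ and $\eta_i\in L^p(Q)$ I would set
\[
(\unf u)(\omega,x):=\sum_{i=1}^n \varphi_i(\tau_{-\frac{x}{\varepsilon}}\omega)\,\eta_i(x).
\]
Two points need to be checked here. Joint $\mathcal F\otimes\mathcal L(Q)$-measurability of $(\omega,x)\mapsto\varphi_i(\tau_{-\frac{x}{\varepsilon}}\omega)$ follows from Assumption~\ref{Assumption_2_1}(iii). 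Independence of the chosen representation of $u$ follows since, if $\sum_j\psi_j\zeta_j=0$ a.e.\ in $\Omega\times Q$, then by Fubini for a.e.\ $x$ the map $\omega\mapsto\sum_j\psi_j(\omega)\zeta_j(x)$ vanishes $P$-a.e., hence --- because $\omega\mapsto\tau_{-\frac{x}{\varepsilon}}\omega$ is an invertible $P$-preserving map and thus sends $P$-null sets to $P$-null sets --- so does $\omega\mapsto\sum_j\psi_j(\tau_{-\frac{x}{\varepsilon}}\omega)\zeta_j(x)$; a second application of Fubini then gives that the latter vanishes a.e.\ in $\Omega\times Q$.

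Next I would verify that $\unf$ is an isometry on $L^p(\Omega)\overset{a}{\otimes}L^p(Q)$: by Tonelli and, for each fixed $x$, measure preservation (Assumption~\ref{Assumption_2_1}(ii)),
\[
\norm{\unf u}_{L^p(\Omega\times Q)}^p=\int_Q\expect{|u(\tau_{-\frac{x}{\varepsilon}}\cdot,x)|^p}\,dx=\int_Q\expect{|u(\cdot,x)|^p}\,dx=\norm{u}_{L^p(\Omega\times Q)}^p.
\]
Since $L^p(\Omega)\overset{a}{\otimes}L^p(Q)$ is dense in $L^p(\Omega\times Q)$, $\unf$ extends uniquely to a linear isometry on $L^p(\Omega\times Q)$; this extension is also the unique bounded operator agreeing with the prescribed formula on a dense subspace, which yields uniqueness.

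For the isomorphism property I would introduce the analogous operator $S^{(p)}$, defined on simple tensors by $(S^{(p)}u)(\omega,x)=u(\tau_{\frac{x}{\varepsilon}}\omega,x)$, which by the same reasoning extends to a linear isometry of $L^p(\Omega\times Q)$. The group law $\tau_{-\frac{x}{\varepsilon}}\circ\tau_{\frac{x}{\varepsilon}}=\tau_{\frac{x}{\varepsilon}}\circ\tau_{-\frac{x}{\varepsilon}}=\mathrm{Id}$ (Assumption~\ref{Assumption_2_1}(i)) shows $\unf S^{(p)}=S^{(p)}\unf=\mathrm{Id}$ on simple tensors, hence --- by density --- on all of $L^p(\Omega\times Q)$; thus $\unf$ is an isometric isomorphism with inverse $S^{(p)}$. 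Finally, to identify the adjoint I would build the $q$-version $S^{(q)}$ in the same way and compute, for $u\in L^p(\Omega)\overset{a}{\otimes}L^p(Q)$ and $v\in L^q(\Omega)\overset{a}{\otimes}L^q(Q)$,
\[
\int_Q\expect{(\unf u)(\cdot,x)\,v(\cdot,x)}\,dx=\int_Q\expect{u(\tau_{-\frac{x}{\varepsilon}}\cdot,x)\,v(\cdot,x)}\,dx=\int_Q\expect{u(\cdot,x)\,v(\tau_{\frac{x}{\varepsilon}}\cdot,x)}\,dx,
\]
using for fixed $x$ the $P$-preserving change of variables $\omega\mapsto\tau_{\frac{x}{\varepsilon}}\omega$. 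This gives $\langle\unf u,v\rangle=\langle u,S^{(q)}v\rangle$ on a dense set, hence $\unf^{*}=S^{(q)}$, which is the claimed formula; and $\unf^{*}$, being the adjoint of an isometric isomorphism, is itself an isometric isomorphism.

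I do not anticipate a serious obstacle in this argument. The only delicate points are the joint measurability and the representation-independence of the pointwise formula, together with the systematic use of Fubini--Tonelli in combination with the measure-preserving changes of variables $\omega\mapsto\tau_{\pm\frac{x}{\varepsilon}}\omega$; once the isometry estimate is available, the extension by density and the adjoint identification are entirely routine.
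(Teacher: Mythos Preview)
Your proposal is correct and follows essentially the same route as the paper: define the operator on simple tensors via the pointwise formula, verify measurability and the isometry identity using measure preservation, and extend by density. The only minor variation is that you obtain surjectivity by exhibiting the explicit inverse $S^{(p)}$, whereas the paper first identifies the adjoint $\unf^{*}=\mathcal T_{-\varepsilon}$ as an isometry and then invokes the standard fact that an isometry with isometric adjoint is surjective; both arguments are equally short and routine.
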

\begin{definition}[Unfolding operator and two-scale convergence in the mean]\label{def46}
The operator $\unf:L^p(\Omega \times Q)\to L^p(\Omega \times Q)$ from Lemma~\ref{L:unf} is called the stochastic unfolding operator. We say that a sequence $(u\e) \subset L^p(\Omega \times Q)$ weakly (strongly) two-scale converges in the mean in $L^p(\Omega \times Q)$ to $u\in L^p(\Omega \times Q)$ if, as $\varepsilon\to 0$,
  \begin{equation*}
    \unf u\e \rightarrow u \quad \text{ weakly (strongly) in }L^p(\Omega \times Q).
  \end{equation*} 
  In this case we write $u\e\wt u$ (resp. $u_{\varepsilon} \st u$) in $L^p(\Omega\times Q)$.
\end{definition}

The below lemma directly follows from the isometry property of $\unf$ and the usual properties of weak and strong convergence in $L^p(\Omega\times Q)$; therefore, we do not present its proof.

\begin{lemma}[Basic properties]\label{lemma_basics} Let $p\in (1,\infty)$, $q=\frac{p}{p-1}$ and $Q\subset \R^d$ be open.
Consider sequences $(u\e)$ in $L^p(\Omega \times Q)$ and $(v\e)$ in $L^q(\Omega \times Q)$.
\begin{enumerate}[label=(\roman*)]
\item If $u\e \overset{2}{\rightharpoonup} u$ in $L^p(\Omega\times Q)$, then $\sup_{\varepsilon\in (0,1)}\norm{u\e}_{L^p(\Omega\times Q)}< \infty$ and 
\begin{equation*}
\norm{u}_{L^p(\Omega\times Q)}\leq \liminf_{\varepsilon\to 0}\norm{u\e}_{L^p(\Omega\times Q)}.
\end{equation*}
\item If $\limsup_{\varepsilon\rightarrow 0}\norm{u\e}_{L^p(\Omega\times Q)}<\infty$, then there exist a subsequence $\varepsilon'$ and $u\in L^p(\Omega \times Q)$ such that $u_{\varepsilon'} \overset{2}{\harpoon} u$ in $L^p(\Omega \times Q)$.
\item  $u\e \overset{2}{\to} u$ in $L^p(\Omega\times Q)$ if and only if $u\e \overset{2}{\harpoon} u$ in $L^p(\Omega\times Q)$ and $\norm{u\e}_{L^p(\Omega\times Q)} \to \norm{u}_{L^p(\Omega\times Q)}$.
\item  If $u\e \overset{2}{\harpoon} u$ in $L^p(\Omega \times Q)$ and $v\e \overset{2}{\to} v$ in $L^q(\Omega \times Q)$, then
\begin{equation*}
\expect{\int_Q u\e(\omega,x) v\e (\omega,x) dx}\rightarrow \expect{\int_Q u(\omega,x)v(\omega,x) dx}.
\end{equation*}
\end{enumerate}
\end{lemma}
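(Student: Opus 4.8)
My plan is to reduce all four statements to standard facts about the Banach space $L^p(\Omega\times Q)$ by exploiting the single structural property supplied by Lemma~\ref{L:unf}: the stochastic unfolding operator $\unf$ is a \emph{linear isometric isomorphism} of $L^p(\Omega\times Q)$ (and, with $q$ in place of $p$, of $L^q(\Omega\times Q)$). For $1<p<\infty$ and a $\sigma$-finite base measure the space $L^p(\Omega\times Q)$ is separable, reflexive and uniformly convex, and I will use precisely these three properties. Concretely, I will rewrite every assertion about two-scale convergence as the corresponding assertion about weak/strong convergence of the unfolded sequence $\unf u\e$ via Definition~\ref{def46}, passing between $u\e$ and $\unf u\e$ by means of the isometry $\norm{\unf w}_{L^p(\Omega\times Q)}=\norm{w}_{L^p(\Omega\times Q)}$.

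For (i): $u\e\wt u$ unfolds to $\unf u\e\harpoon u$ weakly in $L^p(\Omega\times Q)$, which is a weakly convergent, hence (by Banach--Steinhaus) norm-bounded, sequence; I will read off the uniform bound and the liminf inequality from boundedness and weak lower semicontinuity of the norm, converting $\norm{\unf u\e}$ back to $\norm{u\e}$ by the isometry. For (ii): the hypothesis $\limsup_\varepsilon\norm{u\e}_{L^p(\Omega\times Q)}<\infty$ is the same as $\limsup_\varepsilon\norm{\unf u\e}_{L^p(\Omega\times Q)}<\infty$, so reflexivity of $L^p(\Omega\times Q)$ yields a weakly convergent subsequence $\unf u_{\varepsilon'}\harpoon u$, which is by definition $u_{\varepsilon'}\wt u$. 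For (iii): I will invoke the Radon--Riesz property of the uniformly convex space $L^p(\Omega\times Q)$, namely that $\unf u\e\to u$ strongly in $L^p(\Omega\times Q)$ if and only if $\unf u\e\harpoon u$ weakly and $\norm{\unf u\e}_{L^p(\Omega\times Q)}\to\norm{u}_{L^p(\Omega\times Q)}$; replacing $\norm{\unf u\e}$ by $\norm{u\e}$ and translating through Definition~\ref{def46} gives the stated equivalence.

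For (iv), the one ingredient of substance is the identity expressing that $\unf$ preserves the $L^p$--$L^q$ duality pairing:
\begin{equation*}
\expect{\int_Q (\unf a)(\omega,x)\,(\unf b)(\omega,x)\,dx}=\expect{\int_Q a(\omega,x)\,b(\omega,x)\,dx}\qquad\text{for }a\in L^p(\Omega\times Q),\ b\in L^q(\Omega\times Q),
\end{equation*}
where $\unf$ acting on $L^q$ is understood as the operator of Lemma~\ref{L:unf} with $q$ in place of $p$. I will verify this first on the algebraic tensor products $L^p(\Omega)\overset{a}{\otimes}L^p(Q)$ and $L^q(\Omega)\overset{a}{\otimes}L^q(Q)$, where it follows at fixed $x$ from the measure-preserving change of variables $\omega\mapsto\tau_{-x/\varepsilon}\omega$ (equivalently, it amounts to $\unf^{*}$ from Lemma~\ref{L:unf} being the inverse of the $L^q$-unfolding), and then extend it to all of $L^p(\Omega\times Q)\times L^q(\Omega\times Q)$ by density, using the isometry bounds. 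Applying it with $a=u\e$, $b=v\e$ gives $\expect{\int_Q u\e v\e\,dx}=\expect{\int_Q (\unf u\e)(\unf v\e)\,dx}$; since $\unf u\e\harpoon u$ weakly in $L^p(\Omega\times Q)$ while $\unf v\e\to v$ strongly in $L^q(\Omega\times Q)$, the pairing on the right converges to $\expect{\int_Q uv\,dx}$.

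I do not expect a genuine obstacle anywhere — which is presumably why the authors omit the proof; the only step meriting a line of care is the density/isometry extension of the pairing identity in (iv) from the algebraic tensor products to the full spaces.
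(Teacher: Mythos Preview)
Your proposal is correct and matches the paper's own stance: the authors explicitly omit the proof, noting that the lemma follows directly from the isometry property of $\unf$ together with the usual properties of weak and strong convergence in $L^p(\Omega\times Q)$, which is precisely the reduction you carry out. The pairing identity you establish for (iv) is exactly the statement that the adjoint $\unf^{*}$ from Lemma~\ref{L:unf} coincides with the inverse of the $L^q$-unfolding operator, so your argument is the intended one.
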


For homogenization of variational problems, in particular problems driven by convex integral functionals, the following transformation and (lower semi-)continuity properties are very useful.

\begin{proposition}\label{P_Cont_1}
Let $p\in (1,\infty)$ and $Q \subset \R^d$ be open and bounded. Let $V: \Omega \times Q \times \R^{m}\to \R$ be such that $V(\cdot,\cdot,F)$ is $\mathcal{F} \otimes \mathcal{L}(Q)$-measurable for all $F\in \R^m$ and $V(\omega,x,\cdot)$ is continuous for a.e. $(\omega,x)\in \Omega \times Q$. Also, we assume that there exists $c>0$ such that for a.e. $(\omega,x)\in \Omega\times Q$
\begin{equation*}
|V(\omega, x, F)|\leq c(1+|F|^p), \quad \text{for all }F\in \R^m.
\end{equation*}
\begin{enumerate}[label=(\roman*)]
\item For all $u\in L^p(\Omega \times Q)^{m}$, we have
\begin{equation}\label{intform}
\expect{\int_Q V(\tau_{\frac{x}{\varepsilon}}\omega, x,u(\omega,x))dx}=\expect{\int_Q V(\omega, x, \unf u(\omega,x))dx} \,.
\end{equation}
\item 
If $u\e \overset{2}{\to} u$ in $L^p(\Omega\times Q)^m$, then 
\begin{equation*}
\lim_{\varepsilon\to 0}\expect{\int_{Q}V(\tau_{\frac{x}{\varepsilon}}\omega,x, u\e(\omega,x))dx} = \expect{\int_{Q} V(\omega, x, u(\omega,x))dx}.
\end{equation*} 
\item We additionally assume that for a.e. $(\omega,x)\in \Omega\times Q$, $V(\omega, x,\cdot)$ is convex. Then, if $u\e \overset{2}{\harpoon} u$ in $L^p(\Omega\times Q)^m$,
\begin{equation*}
\liminf_{\varepsilon\to 0}\expect{\int_{Q} V(\tau_{\frac{x}{\varepsilon}} \omega,x,u\e(\omega,x)) dx}\geq \expect{\int_{Q} V(\omega, x, u(\omega,x))dx}.
\end{equation*}
\end{enumerate}
\noindent(For the proof see Section \ref{S_Proofs}.)
\end{proposition}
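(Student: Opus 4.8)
The plan is to deduce all three parts from the transformation identity (i), which trades the oscillatory integrand $V(\tau_{x/\varepsilon}\omega,x,\cdot)$ for the fixed integrand $V(\omega,x,\cdot)$ at the price of unfolding the argument; parts (ii) and (iii) then reduce to classical (semi)continuity properties of a single integral functional on $L^p$. The key preliminary observation is that the abstractly defined operator $\unf$ is nothing but composition with a measure-preserving point map. Setting $T_\varepsilon(\omega,x)=(\tau_{-x/\varepsilon}\omega,x)$ and $S_\varepsilon(\omega,x)=(\tau_{x/\varepsilon}\omega,x)$, Assumption~\ref{Assumption_2_1}(iii) makes these maps $(\mathcal F\otimes\mathcal L(Q))$-measurable, the group property $\tau_0=\mathrm{Id}$, $\tau_{x+y}=\tau_x\circ\tau_y$ makes them mutually inverse, and measure preservation together with Fubini shows that each pushes $P\otimes(\mathcal L|_Q)$ forward to itself; in particular $\expect{\int_Q h\circ S_\varepsilon\,dx}=\expect{\int_Q h\,dx}$ for every $h\in L^1(\Omega\times Q)$, and $u\mapsto u\circ T_\varepsilon$ is a linear isometry of $L^p(\Omega\times Q)$. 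Since this isometry agrees with the defining formula of $\unf$ from Lemma~\ref{L:unf} on the dense subspace $L^p(\Omega)\overset{a}{\otimes}L^p(Q)$, I would conclude $\unf u=u\circ T_\varepsilon$ a.e.\ for every $u\in L^p(\Omega\times Q)$ (componentwise for $\R^m$-valued $u$), and hence $(\unf u)\circ S_\varepsilon=u$ a.e.

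With this in hand, (i) is immediate. For $u\in L^p(\Omega\times Q)^m$ the function $h(\omega,x):=V(\omega,x,(\unf u)(\omega,x))$ is $(\mathcal F\otimes\mathcal L(Q))$-measurable because $V$ is a Carath\'eodory integrand and $\unf u$ is measurable, and it belongs to $L^1(\Omega\times Q)$ by the bound $|V(\omega,x,F)|\le c(1+|F|^p)$ and $\unf u\in L^p$. Composing with $S_\varepsilon$ and using $(\unf u)\circ S_\varepsilon=u$ a.e.\ gives $h\circ S_\varepsilon(\omega,x)=V(\tau_{x/\varepsilon}\omega,x,u(\omega,x))$ a.e.; integrating and invoking the measure preservation of $S_\varepsilon$ yields $\expect{\int_Q V(\tau_{x/\varepsilon}\omega,x,u)\,dx}=\expect{\int_Q h\,dx}=\expect{\int_Q V(\omega,x,\unf u)\,dx}$, which is \eqref{intform}.

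For (ii) and (iii) I would use (i) to rewrite the oscillatory integral as $I(\unf u\e)$, where $I(w):=\expect{\int_Q V(\omega,x,w(\omega,x))\,dx}$ is finite on all of $L^p(\Omega\times Q)^m$ by the growth bound, and then invoke standard properties of $I$. For (ii): a routine dominated-convergence argument (pass to an arbitrary subsequence, extract a.e.\ convergence together with an $L^p$-dominating function, use continuity of $V(\omega,x,\cdot)$ and the bound $|V(\omega,x,\cdot)|\le c(1+|\cdot|^p)$) shows that $I$ is strongly continuous on $L^p(\Omega\times Q)^m$, and since $\unf u\e\to u$ strongly this gives $I(\unf u\e)\to I(u)$. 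For (iii): $I$ is moreover convex because $V(\omega,x,\cdot)$ is convex, and a convex, strongly continuous (hence strongly lower semicontinuous) functional on a Banach space is sequentially weakly lower semicontinuous; since $\unf u\e\rightharpoonup u$ weakly, $\liminf_{\varepsilon\to0}I(\unf u\e)\ge I(u)$, which is the assertion.

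I expect the only genuinely delicate step to be the reduction in (i): one must carefully justify that the abstractly defined isometry $\unf$ is actually realized by the point transformation $T_\varepsilon$ — so that the pointwise identity $(\unf u)\circ S_\varepsilon=u$ and the superposition $V(\cdot,\cdot,\unf u)$ become meaningful almost everywhere for a general $u\in L^p$ and not only for tensors — and to check measurability of the Nemytskii compositions involved. Everything downstream of (i) is classical $L^p$ functional analysis and uses no homogenization-specific input.
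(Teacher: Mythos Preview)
Your proof is correct, and for parts (ii) and (iii) it coincides with the paper's argument: both rewrite the oscillatory integral as $I(\unf u_\varepsilon)$ via (i), then appeal to strong continuity of the Nemytskii functional $I$ for (ii), and to weak lower semicontinuity of a convex, strongly l.s.c.\ functional for (iii).

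The genuine difference is in (i). The paper first proves \eqref{intform} only for $u\in L^p(\Omega)\overset{a}{\otimes}L^p(Q)^m$ (where the pointwise formula for $\unf u$ is available by definition), and then passes to general $u$ by approximation: choose $u_k\in L^p(\Omega)\overset{a}{\otimes}L^p(Q)^m$ with $u_k\to u$ in $L^p$, pass to a pointwise a.e.\ convergent subsequence, and apply dominated convergence on both sides of \eqref{intform}. You instead identify $\unf$ globally as the composition operator $u\mapsto u\circ T_\varepsilon$ with the measure-preserving bijection $T_\varepsilon(\omega,x)=(\tau_{-x/\varepsilon}\omega,x)$, by observing that two bounded linear operators agreeing on a dense subspace coincide; this gives the pointwise identity $(\unf u)\circ S_\varepsilon=u$ a.e.\ for \emph{every} $u\in L^p$, and \eqref{intform} follows in one step from measure preservation of $S_\varepsilon$. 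Your route is slightly more conceptual and avoids the approximation/dominated convergence step, at the cost of the extra (but easy) verification that $T_\varepsilon$ is measurable and measure preserving on the product space. Both arguments are standard and roughly equal in length; yours has the mild advantage that the identity $\unf u=u\circ T_\varepsilon$ is itself a useful byproduct.
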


\begin{remark}[{Comparison to the notion of} \cite{bourgeat1994stochastic}]\label{R_Stoch_1}
The notion of weak two-scale convergence in the mean of Definition~\ref{def46}, i.e., weak convergence of unfolded sequences, coincides with the convergence notion introduced in \cite{bourgeat1994stochastic} (see also \cite{andrews1998stochastic}). More precisely, for a bounded sequence $(u_{\varepsilon})\subset L^p(\Omega\times Q)$ we have $u\e\wt u$ in $L^p(\Omega\times Q)$ (in the sense of Definition~\ref{def46}) if and only if $u\e$ stochastically two-scale converges in the mean to $u$ in the sense of \cite{bourgeat1994stochastic}, i.e. 
\begin{equation}\label{eq:1}
\lim_{\varepsilon\rightarrow 0}\expect{\int_{Q}u_{\varepsilon}(\omega,x)\varphi(\tau_{\frac{x}{\varepsilon}}\omega,x)dx}=\expect{ \int_{Q}u(\omega,x)\varphi(\omega,x)dx},
\end{equation}
for any $\varphi\in L^q(\Omega\times Q)$ that is admissible (in the sense that the mapping $(\omega,x)\mapsto \varphi(\tau_{\frac{x}{\varepsilon}}\omega,x)$ is well-defined). Indeed, with help of $\unf$ (and its adjoint) we might rephrase the integral on the left-hand side in \eqref{eq:1} as
\begin{equation}\label{eq:1234}
\expect{\int_{Q}u_{\varepsilon}(\unf^{*}\varphi)\, dx}=\expect{\int_{Q}(\unf u_{\varepsilon})\varphi dx},
\end{equation}
which proves the equivalence. For the reason of this equivalence, we use the terms \textit{weak} and \textit{strong stochastic two-scale convergence in the mean} instead of talking about \textit{weak} or \textit{strong convergence of unfolded sequences}. 

The arguments in this paper are inspired by both, the \textit{unfolding approach}---we transform intregrals with oscillations into integrals without (or controlable) oscillations---and \textit{two-scale convergence} in the sense that we make use of oscillating test-functions.
\end{remark}
\subsection{Two-scale limits of gradients}\label{S_Two}
\def\per{{\sf per}}
 
The following proposition presents a weak two-scale compactness statement for sequences of gradient fields.

\begin{proposition}[Compactness]\label{prop1}
Let $p\in (1,\infty)$ and $Q\subset \R^d$ be open. Let $(u\e)$ be a bounded sequence in $L^p(\Omega)\otimes W^{1,p}(Q)$. Then, there exist $u\in L^p_{{\mathrm{inv}}}(\Omega)\otimes W^{1,p}(Q)$ and $\chi\in L^p_{\mathrm{pot}}(\Omega)\otimes L^p(Q)$ such that, up to a subsequence,
\begin{equation}\label{equation1}
u\e \overset{2}{\harpoon} u  \quad \text{in }L^p(\Omega\times Q), \quad \nabla u\e \overset{2}{\harpoon} \nabla u +\chi  \quad \text{in }L^p(\Omega\times Q)^d.
\end{equation}
If, additionally, $\expect{\cdot}$ is ergodic, then $u=P_{\mathrm{inv}} u=\expect{u} \in W^{1,p}(Q)$ and $\expect{u_{\varepsilon}}\harpoon u$ weakly in $W^{1,p}(Q)$.

\noindent(For the proof see Section \ref{S_Proofs}.)
\end{proposition}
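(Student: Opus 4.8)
The plan is to use the stochastic unfolding operator to transform the gradient fields and then exploit the fact that the unfolded gradients, suitably rescaled, become amenable to a compactness argument involving the stochastic derivative $D$. First I would extract, using Lemma~\ref{lemma_basics}(ii) applied to the bounded sequences $(u\e)$ in $L^p(\Omega\times Q)$ and $(\nabla u\e)$ in $L^p(\Omega\times Q)^d$, a subsequence and limits $u\in L^p(\Omega\times Q)$, $v\in L^p(\Omega\times Q)^d$ such that $u\e\overset{2}{\harpoon}u$ and $\nabla u\e\overset{2}{\harpoon}v$, i.e.\ $\unf u\e\harpoon u$ and $\unf\nabla u\e\harpoon v$ weakly in $L^p$. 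The heart of the matter is to identify $u$ and $v$: I claim $u$ is shift-invariant in $\omega$ and lies in $L^p_{\mathrm{inv}}(\Omega)\otimes W^{1,p}(Q)$, and $v=\nabla u+\chi$ with $\chi\in L^p\p(\Omega)\otimes L^p(Q)$.

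The key structural identity is a commutation relation between unfolding and the two derivatives. On the dense set $L^p(\Omega)\overset{a}{\otimes}W^{1,p}(Q)$ one computes directly from the formula $(\unf w)(\omega,x)=w(\tau_{-x/\varepsilon}\omega,x)$ that the spatial gradient of the unfolded function satisfies, heuristically, $\nabla_x(\unf w)=\unf(\nabla_x w)-\tfrac1\varepsilon\unf(Dw)$, equivalently $\unf(\nabla w)=\nabla_x(\unf w)+\tfrac1\varepsilon D(\unf w)$ where here $D$ denotes the stochastic gradient acting in the $\omega$-variable (it commutes with $\unf$ up to the $\varepsilon$-scaling built into $\tau_{-x/\varepsilon}$; this should be made precise as a lemma, or may already be available from the cited \cite{varga2019stochastic}). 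Thus $\unf\nabla u\e-\nabla_x(\unf u\e)=\tfrac1\varepsilon D(\unf u\e)$ is bounded in $L^p$, which forces $D(\unf u\e)\to 0$ in $L^p(\Omega\times Q)^d$; since $D$ is closed and $\unf u\e\harpoon u$, one concludes $Du=0$, i.e.\ $u\in L^p_{\mathrm{inv}}(\Omega)\otimes W^{1,p}(Q)$ (using that shift-invariance in each $\omega$-slice is preserved and the $x$-regularity is inherited from the weak limit of $\nabla_x(\unf u\e)$). For $v$: test $\unf\nabla u\e$ against functions of the form $\psi(\omega)\eta(x)$ with $\psi\in W^{1,q}(\Omega)^d$ having $D^*\psi=0$ (i.e.\ $\psi$ ``solenoidal'' in the stochastic sense) and $\eta\in C_c^\infty(Q)$; the contribution of the $\tfrac1\varepsilon D(\unf u\e)$ term integrates by parts to zero against such $\psi$, so in the limit $\langle v,\psi\eta\rangle=\langle \nabla u,\psi\eta\rangle$, which means $v-\nabla u$ is orthogonal to the annihilator of $L^p\p(\Omega)$ in the $\omega$-variable, hence $v-\nabla u=:\chi\in L^p\p(\Omega)\otimes L^p(Q)$ by the standard Helmholtz-type decomposition $L^p(\Omega)^d = L^p\p(\Omega)\oplus(\text{solenoidal part})$, applied pointwise in $x$ and then integrated; some care with the tensor structure and with $p\neq 2$ (where one works with duality rather than orthogonal projections) is needed here.

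Finally, in the ergodic case $L^p_{\mathrm{inv}}(\Omega)\simeq\R$, so $u=\pinv u=\expect{u}$ is a deterministic function in $W^{1,p}(Q)$; to get $\expect{u\e}\harpoon u$ weakly in $W^{1,p}(Q)$ I would note that $\expect{\unf u\e}=\expect{u\e}$ (the unfolding operator preserves the mean, since it is built from measure-preserving maps — this follows from the isometry and density, or directly from Fubini on $L^p(\Omega)\overset{a}{\otimes}L^p(Q)$), and likewise $\expect{\unf\nabla u\e}=\expect{\nabla u\e}=\nabla\expect{u\e}$, while $\expect{\chi}=0$ since $\chi\in L^p\p(\Omega)\otimes L^p(Q)$ has zero mean in $\omega$; combining with the weak two-scale convergences and testing against deterministic $L^q(Q)$ functions gives $\expect{u\e}\harpoon \expect{u}=u$ and $\nabla\expect{u\e}\harpoon\nabla u$ in $L^p(Q)$, hence weak convergence in $W^{1,p}(Q)$.

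The main obstacle I anticipate is establishing and correctly using the commutation identity $\unf(\nabla u\e)=\nabla_x(\unf u\e)+\tfrac1\varepsilon D(\unf u\e)$ with the appropriate domains: one must check it on a dense subspace, verify that all terms make sense (in particular that $\unf u\e\in L^p(\Omega)\otimes W^{1,p}(Q)$ with the stochastic gradient well-defined, which requires $u\e$ to have enough regularity in $\omega$ — or else one argues by approximation, first for $u\e\in L^p(\Omega)\overset{a}{\otimes}W^{1,p}(Q)$ and then passing to the closure), and then pass this identity through the weak limit using closedness of $D$. The second delicate point, for general $p\in(1,\infty)$, is replacing the orthogonal Helmholtz decomposition by the correct duality characterization of $L^p\p(\Omega)$ via $\langle\chi,\psi\rangle=0$ for all stochastically-solenoidal $\psi\in L^q(\Omega)^d$, and verifying this identification respects the tensor-product structure $\cdot\otimes L^p(Q)$.
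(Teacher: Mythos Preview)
Your overall strategy---extract weak two-scale limits of $u\e$ and $\nabla u\e$, then identify them by testing---matches the paper's. The gap is in how you deploy the commutation identity. The relation $\unf(\nabla w)=\nabla_x(\unf w)+\tfrac1\varepsilon D(\unf w)$ holds on $W^{1,p}(\Omega)\overset{a}{\otimes}W^{1,p}(Q)$, but for a general $u\e\in L^p(\Omega)\otimes W^{1,p}(Q)$ neither $\nabla_x(\unf u\e)$ nor $D(\unf u\e)$ exists as an $L^p$ function: the map $x\mapsto u\e(\tau_{-x/\varepsilon}\omega,x)$ is not weakly differentiable in $x$ unless $u\e$ has stochastic regularity in $\omega$, which is not assumed. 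The approximation argument you sketch does not rescue this: if you approximate $u\e$ by $u\e^k\in W^{1,p}(\Omega)\overset{a}{\otimes}W^{1,p}(Q)$, the left-hand side converges as $k\to\infty$ but the two terms on the right do not converge individually (only their sum is controlled), so you cannot conclude that $D(\unf u\e)$ exists, let alone that it equals $\varepsilon$ times something bounded. Consequently the step ``$D(\unf u\e)\to 0$, hence $Du=0$ by closedness'' breaks down, and so does ``$x$-regularity is inherited from the weak limit of $\nabla_x(\unf u\e)$''.

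The paper's remedy is to shift the identity to the adjoint side: for $\varphi\in W^{1,q}(\Omega)$ and $\eta\in C_c^\infty(Q)$ one has the exact formula $\partial_i\unf^*(\varphi\eta)=\tfrac1\varepsilon\unf^*(\eta D_i\varphi)+\unf^*(\varphi\partial_i\eta)$, valid because the \emph{test functions} are chosen smooth. Testing $\nabla u\e$ (resp.\ $\varepsilon\nabla u\e$) against $\unf^*(\varphi\eta)$ and integrating by parts in $x$ gives an identity in which only $\unf u\e$ and $\unf\nabla u\e$ appear on the other side; passing to the limit yields $\expect{u(\cdot,x)D_i\varphi}=0$ for a.e.\ $x$, hence $u\in L^p_{\mathrm{inv}}(\Omega)\otimes L^p(Q)$ via Lemma~\ref{lemA}(i). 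For the $W^{1,p}(Q)$-regularity of $u$ the paper does not try to differentiate $\unf u\e$ in $x$; instead it works with $\pinv u\e$, uses the identity $\pinv\circ\unf=\unf\circ\pinv=\pinv$, and observes that $\pinv$ commutes with $\nabla$, so $\pinv u\e$ is genuinely bounded in $L^p_{\mathrm{inv}}(\Omega)\otimes W^{1,p}(Q)$ and its weak limit is $u$ (Lemma~\ref{lem7}).

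Your identification of $\chi$ (test against $\psi\in W^{1,q}(\Omega)^d$ with $D^*\psi=0$, so the singular $\tfrac1\varepsilon$-term drops out) is exactly what the paper does once translated to the test-function side, and your treatment of the ergodic case via $\expect{\unf u\e}=\expect{u\e}$ is correct and equivalent to the paper's argument through $\pinv$, which equals $\expect{\cdot}$ under ergodicity.
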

We remark that the above result is already established in \cite{bourgeat1994stochastic} in the context of two-scale convergence in the mean in the $L^2$-space setting. We recapitulate its short proof from the perspective of stochastic unfolding, see Section \ref{S_Proofs}.

\begin{remark}\label{R_Two_1} Note that the proof of the above proposition reveals that $P_{\mathrm{inv}}u\e \harpoon u$ weakly in $L^p_{\mathrm{inv}}(\Omega)\otimes W^{1,p}(Q)$ (see Lemma \ref{lem7}). If we consider a closed subspace $X\subset W^{1,p}(Q)$ and assume that $u\e(\omega)\in X$ $P$-a.e., then $P_{\mathrm{inv}}u\e \in L^p_{\mathrm{inv}}(\Omega)\otimes X$. Therefore, it follows that $u\in L^p_{\mathrm{inv}}(\Omega)\otimes X$. This observation is useful if we consider boundary value problems, e.g., if $X=W^{1,p}_0(Q)$. We may argue similarly for closed convex subsets in $W^{1,p}(Q)$.
\end{remark}

\begin{lemma}[Recovery sequence]\label{Nonlinear_recovery} Let $p,\theta \in (1,\infty)$ and $Q\subset \R^d$ be open. For $\chi\in L^p_{\mathrm{pot}}(\Omega)\otimes L^p(Q)$ and $\delta>0$, there exists a sequence $g_{\delta,\varepsilon}(\chi) \in L^p(\Omega)\otimes W^{1,p}_0(Q)$ such that
\begin{equation*}
\|g_{\delta,\varepsilon}(\chi)\|_{L^{\theta}(\Omega\times Q)} \leq \varepsilon c(\delta), \quad \limsup_{\varepsilon\to 0}\|\mathcal{T}_{\varepsilon}\nabla g_{\delta, \varepsilon}(\chi)-\chi\|_{L^p(\Omega\times Q)^d}\leq \delta,
\end{equation*}
where $c(\delta)>0$ does not depend on $\varepsilon$.

\noindent
(For the proof see Section \ref{S_Proofs}.)
\end{lemma}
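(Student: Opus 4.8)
\textbf{Proof proposal for Lemma \ref{Nonlinear_recovery}.}

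The plan is to first reduce to a dense class of $\chi$ for which an explicit construction is available, and then pass to the general case by a diagonal argument. Since $L^p\p(\Omega)\otimes L^p(Q)$ is the closure of $L^p\p(\Omega)\overset{a}{\otimes}L^p(Q)$, and since elements of $L^p\p(\Omega)=\overline{\mathrm{ran}(D)}$ are approximated by stochastic gradients $D\varphi$ with $\varphi\in W^{1,p}(\Omega)$, it suffices to treat the model case $\chi(\omega,x)=D\varphi(\omega)\,\eta(x)$ with $\varphi\in W^{1,p}(\Omega)$ and $\eta\in C_c^\infty(Q)$ (finite sums of such being dense, and linearity of the construction in $\chi$ handling the sum). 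For such $\chi$ the natural candidate is the two-scale ansatz
\begin{equation*}
 g_{\varepsilon}(\omega,x)=\varepsilon\,\varphi\!\left(\tau_{-\frac{x}{\varepsilon}}\omega\right)\eta(x),
\end{equation*}
which should be thought of as the ``unfolded'' analogue of the classical oscillating test function $\varepsilon\varphi(\frac{x}{\varepsilon})\eta(x)$. First I would check that $g_\varepsilon\in L^p(\Omega)\otimes W^{1,p}_0(Q)$: the factor $\eta\in C_c^\infty(Q)$ takes care of the boundary condition, and the needed regularity in $\omega$ comes from $\varphi\in W^{1,p}(\Omega)$ together with the measurability and measure-preservation of $\tau$ (Assumption \ref{Assumption_2_1}); the $L^\theta$-bound $\|g_\varepsilon\|_{L^\theta(\Omega\times Q)}\le \varepsilon\,\|\varphi\|_{L^\theta(\Omega)}\|\eta\|_{L^\theta(Q)}=\varepsilon\,c$ is immediate by the unitarity of $U_x$ on $L^\theta(\Omega)$ (here one uses that $\varphi\in L^\theta(\Omega)$, which holds for the dense class one chooses, e.g.\ bounded $\varphi$).

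The core computation is the gradient. Differentiating the ansatz, $\nabla g_\varepsilon(\omega,x)=\nabla\big[x\mapsto \varphi(\tau_{-x/\varepsilon}\omega)\big]\,\varepsilon\eta(x)+\varphi(\tau_{-x/\varepsilon}\omega)\,\varepsilon\nabla\eta(x)$. The second term has $L^p$-norm $O(\varepsilon)$ and is negligible. For the first term, the key point is the intertwining relation between the spatial gradient and the stochastic gradient: formally $\nabla_x\big[\varphi(\tau_{-x/\varepsilon}\omega)\big]=-\frac{1}{\varepsilon}(D\varphi)(\tau_{-x/\varepsilon}\omega)$ — this is exactly the infinitesimal version of $U_{he_i}$ and is how $D_i$ was defined. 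Here one must be a little careful since $D\varphi$ is only an $L^p(\Omega)$-object, so I would justify the identity rigorously by testing against smooth functions and using the definition of $D_i$ as the generator, or equivalently by first proving it for a core of nice $\varphi$ (e.g.\ $\varphi$ in the domain where $U_{he_i}\varphi$ is differentiable) and extending by density and the closedness of $D$. Granting this, the $\varepsilon$'s cancel and
\begin{equation*}
 \nabla g_\varepsilon(\omega,x)= -(D\varphi)(\tau_{-x/\varepsilon}\omega)\,\eta(x)+O(\varepsilon)\quad\text{in }L^p(\Omega\times Q)^d.
\end{equation*}
Now apply the unfolding operator: since $\unf$ acts on $L^p(\Omega)\overset{a}{\otimes}L^p(Q)$ by $(\unf u)(\omega,x)=u(\tau_{-x/\varepsilon}\omega,x)$, we have $\unf\big[(\omega,x)\mapsto(D\varphi)(\tau_{x/\varepsilon}\omega)\eta(x)\big](\omega,x)=(D\varphi)(\omega)\eta(x)$ — wait, I must track signs: the relevant identity is $(\unf)\big[\nabla g_\varepsilon\big](\omega,x)=-(D\varphi)(\omega)\eta(x)+O(\varepsilon)$ only if one is careful about whether the exponent is $+x/\varepsilon$ or $-x/\varepsilon$; the correct normalization is the one for which $\unf\nabla g_\varepsilon\to -D\varphi\,\eta$ strongly (not merely weakly), and one adjusts the sign in the ansatz ($g_\varepsilon=-\varepsilon\varphi(\cdots)\eta$ if needed) so that $\unf\nabla g_\varepsilon\to \chi = D\varphi\,\eta$. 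In any case, the outcome is $\|\unf\nabla g_\varepsilon-\chi\|_{L^p(\Omega\times Q)^d}\to 0$, i.e.\ the $\limsup$ is $0<\delta$, and $\|g_\varepsilon\|_{L^\theta}\le\varepsilon c$, for $\chi$ in the dense class; the $\delta$ in the statement is only needed for the density step.

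Finally, for general $\chi\in L^p\p(\Omega)\otimes L^p(Q)$ and given $\delta>0$, choose $\chi_\delta$ in the dense class with $\|\chi-\chi_\delta\|_{L^p(\Omega\times Q)^d}\le\delta$, set $g_{\delta,\varepsilon}(\chi):=g_\varepsilon(\chi_\delta)$ (the construction being linear, with the above estimates), and conclude by the triangle inequality together with the isometry of $\unf$:
\begin{equation*}
 \limsup_{\varepsilon\to0}\|\unf\nabla g_{\delta,\varepsilon}(\chi)-\chi\|_{L^p}\le \limsup_{\varepsilon\to0}\|\unf\nabla g_\varepsilon(\chi_\delta)-\chi_\delta\|_{L^p}+\|\chi_\delta-\chi\|_{L^p}\le 0+\delta=\delta,
\end{equation*}
while $\|g_{\delta,\varepsilon}(\chi)\|_{L^\theta}\le\varepsilon\,c(\delta)$ with $c(\delta)$ depending on $\chi_\delta$ (hence on $\delta$) but not on $\varepsilon$. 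I expect the main obstacle to be the rigorous justification of the intertwining identity $\nabla_x[\varphi(\tau_{-x/\varepsilon}\,\cdot)] = -\tfrac1\varepsilon (D\varphi)(\tau_{-x/\varepsilon}\,\cdot)$ at the level of $L^p$-functions — i.e.\ making sense of ``$\nabla$ of a stationary field'' when $\varphi$ is merely in $W^{1,p}(\Omega)$ — which one handles by working first on a dense core of $\varphi$ where everything is classical and then invoking closedness of $D$ and the isometry of the relevant shift operators; the rest is bookkeeping with $\unf$ and its unitarity.
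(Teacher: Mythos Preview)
Your proposal is correct and matches the paper's proof: the paper writes your ansatz as $g_{\delta,\varepsilon}=\varepsilon\,\unf^{-1}g_\delta$ with $g_\delta=\sum_i\varphi_i^\delta\eta_i^\delta$ (this resolves the sign you were unsure about, since $\unf^{-1}$ involves $\tau_{+x/\varepsilon}$, so that $\nabla g_{\delta,\varepsilon}=\unf^{-1}Dg_\delta+\varepsilon\,\unf^{-1}\nabla g_\delta$ and hence $\unf\nabla g_{\delta,\varepsilon}=Dg_\delta+\varepsilon\nabla g_\delta\to Dg_\delta$ strongly). The $L^\theta$-integrability of $\varphi_i^\delta$ that you assume is justified in the paper by first approximating $\varphi\in W^{1,p}(\Omega)$ in $L^p$ by bounded functions $\varphi_k$ and then mollifying via the group action, $\varphi_k^n=\int_{\R^d}\rho_n(y)\,U_y\varphi_k\,dy$, to land in $W^{1,p}(\Omega)\cap L^\infty(\Omega)$ with $D\varphi_k^n\to D\varphi$ in $L^p(\Omega)^d$ along a diagonal sequence.
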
 
\subsection{Proofs of the statements in Section \ref{S_Stoch}}\label{S_Proofs}
Before presenting the proofs, we recall some basic facts from functional analysis which will be helpful in the following.

\begin{remark}
Let $p \in (1,\infty)$ and $q= \frac{p}{p-1}$.
  \begin{enumerate}[label=(\roman*)]
  \item $\expect{\cdot}$ is ergodic $\Leftrightarrow$ $L^p_{\mathrm{inv}}(\Omega)\simeq \R$ $\Leftrightarrow$ $P_{\mathrm{inv}} f=\expect{f}$. 
  \item The following orthogonality relations hold (for a proof see \cite[Section 2.6]{brezis2010functional}): We identify the dual space $L^p(\Omega)^*$ with $L^q(\Omega)$, and define for a set $A\subset L^q(\Omega)$ its orthogonal complement $A^{\bot}\subset L^p(\Omega)$ as 
\begin{equation*} 
A^{\bot}=\cb{\varphi\in L^p(\Omega):\expect{\varphi \psi}=0 \text{ for all }\psi\in A}.
\end{equation*}  
It holds
    \begin{equation}\label{orth}
      \mathrm{ker}(D)=\mathrm{ran}(D^*)^{\bot}, \quad 
      L^p_{\mathrm{pot}}(\Omega)= \overline{\mathrm{ran}(D)}=\mathrm{ker}(D^*)^{\bot}.
    \end{equation}
    Above, $\mathrm{ker}(\cdot)$ denotes the kernel and $\mathrm{ran}(\cdot)$ the range of an operator.
  \end{enumerate}
\end{remark}
\begin{proof}[Proof of Lemma \ref{L:unf}]
We first define $\unf$ on $\mathcal{A}:=\{u(\omega,x)=\varphi(\omega)\eta(x)\,:\,\varphi\in L^p(\Omega),\,\eta\in L^p(Q)\,\}\subset L^p(\Omega\times Q)$ by setting $(\unf u)(\omega,x)=\varphi(\tau_{-\frac{x}{\varepsilon}}\omega) \eta(x)$ for all $u=\varphi\eta\in\mathcal{A}$. In view of Assumption \ref{Assumption_2_1} (iii), $\unf u$ is $\mathcal F\otimes\mathcal L(Q)$-measurable and using the measure preserving property of $\tau$, we have
  \begin{equation*}
    \|\unf u\|_{L^p(\Omega\times Q)}^p=\int_Q\expect{|\varphi(\tau_{-\frac{x}{\varepsilon}}\omega)|^p}|\eta(x)|^p\,dx=\|\varphi\|_{L^p(\Omega)}^p\|\eta\|_{L^p(Q)}^p=\|u\|_{L^p(\Omega\times Q)}^p.
  \end{equation*}
Since $\mbox{span}(\mathcal{A})$ is dense in $L^p(\Omega\times Q)$, $\unf$ extends to a linear isometry from $L^p(\Omega\times Q)$ to $L^p(\Omega\times Q)$. We define a linear isometry $\mathcal{T}_{-\varepsilon}: L^q(\Omega\times Q)\to L^q(\Omega\times Q)$ analogously as $\mathcal{T}_{\varepsilon}$, with $\varepsilon$ replaced by $-\varepsilon$. Then for any $\varphi\in L^p(\Omega)\stackrel{a}{\otimes} L^p(Q)$ and $\psi\in L^q(\Omega)\stackrel{a}{\otimes} L^q(Q)$ we have (thanks to the measure preserving property of $\tau$  and Fubini):
  \begin{eqnarray*}
    \expect{\int_Q(\unf\varphi)\psi\,dx}&=&\int_Q\expect{\varphi(\tau_{-\frac{x}{\varepsilon}}\omega,x)\psi(\omega,x)}dx\\
  &=&\int_Q\expect{\varphi(\omega,x)\psi(\tau_{\frac{x}{\varepsilon}}\omega,x)}dx=\expect{\int_Q\varphi(\mathcal T_{-\varepsilon}\psi)dx}.
  \end{eqnarray*}
  Since  $L^p(\Omega)\stackrel{a}{\otimes} L^p(Q)$ and $L^q(\Omega)\stackrel{a}{\otimes} L^q(Q)$ are dense in $L^p(\Omega\times Q)$ and $L^q(\Omega\times Q)$, respectively, we conclude that $\unf^*=\mathcal T_{-\varepsilon}$. Since $\mathcal{T}_{\varepsilon}^*$ is an isometry, it follows that $\unf$ is surjective (see \cite[Theorem 2.20]{brezis2010functional}). Analogously, $\mathcal{T}_{\varepsilon}^*$ is also surjective.
\end{proof}
\begin{proof}[Proof of Proposition \ref{P_Cont_1}]
We first note that $V$ is a Carath{\'e}odory integrand in the sense of Remark \ref{rem:appen} (if necessary we tacitly redefine it by $V(\omega,x,\cdot)=0$ for $(\omega,x)$ in a set of measure $0$) and therefore it follows that $V$ is a normal integrand (see Appendix \ref{sec:norm:a}). For fixed $\varepsilon>0$, the mapping $(\omega,x)\mapsto (\tau_{\frac{x}{\varepsilon}}\omega,x)$ is $\brac{\mathcal{F}\otimes \mathcal{L}(Q),\mathcal{F}\otimes \mathcal{L}(Q)}$-measurable and therefore $(\omega,x,F)\mapsto V(\tau_{\frac{x}{\varepsilon}}\omega,x,F)$ defines as well a Carath{\'e}odory and thus normal integrand. Hence, with the help of the growth condition, all the integrals in the statement of the proposition are well-defined.

\textit{Proof of (i):} We first consider the case $u\in L^p(\Omega)\overset{a}{\otimes} L^p(Q)^m$. By Fubini's theorem, the measure preserving property of $\tau$, and  by the transformation $\omega\mapsto \tau_{-\frac{x}{\varepsilon}}\omega$, we have
\begin{align*}
\expect{\int_{Q}V(\tau_{\frac{x}{\varepsilon}}\omega,x,u(\omega,x))dx} & =\int_{Q}\expect{V(\tau_{\frac{x}{\varepsilon}}\omega,x,u(\omega,x))}dx \\ & = \int_{Q}\expect{V(\omega,x,u(\tau_{-\frac{x}{\varepsilon}}\omega,x))}dx.
\end{align*}
Since $u\in L^p(\Omega)\stackrel{a}{\otimes}L^p(Q)$, we have $u(\tau_{-\frac{x}{\varepsilon}}\omega,x)=\unf u(\omega,x)$, and thus \eqref{intform} follows. The general case follows by an approximation argument. Indeed, for any $u\in L^p(\Omega\times Q)^m$ we can find a sequence $u_k \in L^p(\Omega)\overset{a}{\otimes} L^p(Q)^m$ such that $u_k \to u$ strongly in $L^p(\Omega\times Q)^m$, and by passing to a subsequence (not relabeled) we may additionally assume that $u_k\to u$ pointwise a.e.~in $\Omega\times Q$. 
By continuity of $V$ in its last variable, we thus have $V(\tau_{\frac{x}{\varepsilon}}\omega,x, u_{k}(\omega,x)) \to V(\tau_{\frac{x}{\varepsilon}}\omega,x, u(\omega,x))$ for a.e.~$(\omega,x)\in \Omega\times Q$. Since $|V(\tau_{\frac{x}{\varepsilon}}\omega,x, u_{k}(\omega,x))|\leq c(1+|u_{k}(\omega,x)|^p)$ a.e.~in $\Omega\times Q$, the dominated convergence theorem (\cite[Theorem 2.8.8]{bogachev2007measure}) implies that $$\lim_{k\to \infty}\expect{\int_{Q}V(\tau_{\frac{x}{\varepsilon}}\omega,x,u_{k}(\omega,x))dx} = \expect{\int_{Q}V(\tau_{\frac{x}{\varepsilon}}\omega,x,u(\omega,x))dx}.$$ In the same way we conclude that $$\lim_{k\to \infty}\expect{\int_{Q}V(\omega,x,\unf u_k(\omega,x))dx} = \expect{\int_{Q}V(\omega,x,\unf u(\omega,x))dx}.$$ Since the integrals on the left-hand sides are the same, \eqref{intform} follows.

\textit{Proof of (ii):} We obtain $\expect{\int_Q V(\tau_{\frac{x}{\varepsilon}}\omega, x,u_{\varepsilon}(\omega,x))dx}=\expect{\int_Q V(\omega, x, \unf u_{\varepsilon}(\omega,x))dx}$ using part (i). Since by assumption $\unf u\e \to u$ strongly in $L^p(\Omega\times Q)^m$, using the growth conditions of $V$ and the dominated convergence theorem, it follows, similarly as in part (i), that we have $\lim_{\varepsilon\to 0}\expect{\int_Q V(\omega, x, \unf u_{\varepsilon}(\omega,x))dx}= \expect{\int_{Q}V(\omega,x,u(\omega,x))dx}$.

\textit{Proof of (iii):} The functional $L^p(\Omega\times Q)^m\ni u \mapsto \expect{\int_{Q}V(\omega, x,u(\omega,x))dx}$ is convex and lower semi-continuous, therefore it is weakly lower semi-continuous (see \cite[Corollary 3.9]{brezis2010functional}). Combining this fact with the transformation formula from (i) and the weak convergence $\unf u_{\varepsilon}\harpoon u$ (by assumption), the claim follows.
\end{proof}
Before stating the proof of Proposition \ref{prop1}, we present some auxiliary lemmas.
\begin{lemma}\label{lemA} Let $p \in (1,\infty)$ and $q=\frac{p}{p-1}$.
\begin{enumerate}[label=(\roman*)]
\item
If $\varphi\in \cb{D^*\psi:\psi\in W^{1,q}(\Omega)^d}^{\bot}$, then $\varphi\in L^p_{{\mathrm{inv}}}(\Omega)$.
\item
If $\varphi \in \cb{\psi\in W^{1,q}(\Omega)^d: D^*\psi=0}^{\bot}$, then $\varphi\in L^p_{\mathrm{pot}}(\Omega)$.
\end{enumerate}
\end{lemma}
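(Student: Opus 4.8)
The two statements are dual to one another in the sense of the orthogonality relations \eqref{orth}, so I would prove (i) directly and then deduce (ii) by a symmetric argument (or vice versa). Throughout, the key algebraic input is the integration by parts formula $\expect{\psi D_i\varphi}=-\expect{\varphi D_i\psi}$ for $\varphi\in W^{1,p}(\Omega)$, $\psi\in W^{1,q}(\Omega)$, together with the characterization of shift-invariant functions as precisely the functions annihilated by every $D_i$, i.e. $L^p_{\mathrm{inv}}(\Omega)=\bigcap_{i=1}^d\ker(D_i)=\ker(D)$ (as an operator on $L^p(\Omega)$), and the dual fact $L^p_{\mathrm{pot}}(\Omega)=\ker(D^*)^{\bot}$ from \eqref{orth}.

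\textbf{Step 1 (proof of (i)).} Let $\varphi\in\{D^*\psi:\psi\in W^{1,q}(\Omega)^d\}^{\bot}$. I want to show $D_i\varphi=0$ for each $i$, which by the remark above gives $\varphi\in L^p_{\mathrm{inv}}(\Omega)$. First, for each fixed $i$ and each scalar $\eta\in W^{1,q}(\Omega)$, the vector $\psi=\eta e_i\in W^{1,q}(\Omega)^d$ has $D^*\psi=-D_i\eta$, so the assumption yields $\expect{\varphi D_i\eta}=0$ for all $\eta\in W^{1,q}(\Omega)$. This says exactly that $\varphi$ lies in $\mathrm{ran}(D_i^*)^{\bot}$ in the scalar sense, hence (by the scalar analogue of \eqref{orth}, $\ker(D_i)=\mathrm{ran}(D_i^*)^{\bot}$) that $\varphi\in\ker(D_i)$. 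Since this holds for every $i=1,\dots,d$, we get $\varphi\in\bigcap_i\ker(D_i)=L^p_{\mathrm{inv}}(\Omega)$. The one subtlety is making sure the test functions $\eta e_i$ really exhaust enough of $W^{1,q}(\Omega)^d$: they do, because to detect $D_i\varphi=0$ componentwise it suffices to pair against arbitrary scalar $\eta$'s in each slot separately, and the density of $W^{1,q}(\Omega)$ in $L^q(\Omega)$ together with closedness of $D_i$ promotes $\expect{\varphi D_i\eta}=0\,\forall\eta$ to $\varphi\in\mathcal D(D_i^*)$ with $D_i^*\,(\cdot)$... — more simply, $\expect{\varphi D_i\eta}=0$ for all $\eta$ in the dense domain $W^{1,q}(\Omega)$ is the definition of $\varphi\perp\mathrm{ran}(D_i)$ after identifying $D_i^{**}=D_i$, which is the content of $\ker(D_i)=\mathrm{ran}(D_i^*)^{\bot}$.

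\textbf{Step 2 (proof of (ii)).} Let $\varphi\in\{\psi\in W^{1,q}(\Omega)^d:D^*\psi=0\}^{\bot}\subset L^p(\Omega)^d$. By \eqref{orth}, $L^p_{\mathrm{pot}}(\Omega)=\ker(D^*)^{\bot}$ where $\ker(D^*)$ is taken inside $L^q(\Omega)^d$; so it suffices to show that $\varphi$ is orthogonal to all of $\ker(D^*)$, not merely to the smoother elements $\{\psi\in W^{1,q}(\Omega)^d:D^*\psi=0\}$. Thus the real work is an approximation/density claim: every $\psi\in\ker(D^*)\subset L^q(\Omega)^d$ can be approximated in $L^q(\Omega)^d$ by elements of $W^{1,q}(\Omega)^d\cap\ker(D^*)$. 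I expect this to follow from a standard mollification in the group variable: convolving $\psi$ against a smooth compactly supported kernel $\rho_\delta$ on $\R^d$ via the action $U_x$, i.e. $\psi_\delta:=\int_{\R^d}\rho_\delta(x)\,U_x\psi\,dx$, produces $\psi_\delta\in W^{1,q}(\Omega)^d$ (smoothness in the group directions is exactly what the mollified average buys), $\psi_\delta\to\psi$ in $L^q(\Omega)^d$ by strong continuity of $\{U_x\}$, and $D^*\psi_\delta=0$ because $D^*$ commutes with each $U_x$ (the $U_x$ are a group of isometries commuting with translations, hence with the generators $D_i$ and their adjoints). Granting this, $\expect{\varphi\cdot\psi}=\lim_\delta\expect{\varphi\cdot\psi_\delta}=0$, so $\varphi\in\ker(D^*)^{\bot}=L^p_{\mathrm{pot}}(\Omega)$.

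\textbf{Main obstacle.} The routine parts (integration by parts, the scalar orthogonality relations) are immediate; the one genuine point requiring care is the density statement in Step 2 — that smooth solenoidal-type fields $W^{1,q}(\Omega)^d\cap\ker(D^*)$ are dense in $\ker(D^*)\subset L^q(\Omega)^d$ — since without it one only gets orthogonality to a proper subset and cannot invoke \eqref{orth}. I would handle it via the group-mollification argument just sketched, checking carefully that the mollified field stays in the domain of $D^*$ with $D^*\psi_\delta=0$; alternatively, if the paper prefers, one can avoid mollification by noting that $\{\psi\in W^{1,q}(\Omega)^d:D^*\psi=0\}$ and $\ker(D^*)$ have the same orthogonal complement because $W^{1,q}(\Omega)^d$ is dense in $L^q(\Omega)^d$ and $D^*$ is closed, so their closures in $L^q$ coincide — but some version of the commuting-mollifier fact is still needed to see that the intersection with $\ker(D^*)$ is preserved under this density. (In fact, since (i) and (ii) are formal adjoints of each other, the cleanest write-up may simply prove (i) carefully and then observe that (ii) is obtained by exchanging the roles of $D$ and $D^*$ and of $p$ and $q$, using $D^{**}=D$.)
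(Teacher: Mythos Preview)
Your proposal is correct and, for part (ii), essentially identical to the paper's proof: both reduce to the density of $W^{1,q}(\Omega)^d\cap\ker(D^*)$ in $\ker(D^*)\subset L^q(\Omega)^d$ and establish it by group mollification (the paper uses the heat kernel $p_t$ as mollifier, you use a generic $\rho_\delta$, which makes no difference).

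For part (i) the routes diverge slightly. You argue abstractly via $L^p_{\mathrm{inv}}(\Omega)=\bigcap_i\ker(D_i)$ and the scalar orthogonality $\ker(D_i)=\mathrm{ran}(D_i^*)^{\bot}$; this is fine, but as you half-notice, the hypothesis only gives $\varphi\perp D_i\eta$ for $\eta\in W^{1,q}(\Omega)$ rather than for all $\eta\in\mathcal D(D_i^*)$, so you still need a density step (the same mollification as in (ii)) to close the argument. The paper instead bypasses this by computing directly: it writes $\expect{(U_{he_i}\varphi-\varphi)\psi}=\int_0^h\expect{\varphi\,D_i^*(U_{-te_i}\psi)}\,dt$ for $\psi\in W^{1,q}(\Omega)$ and observes that the integrand vanishes by assumption, giving $U_{he_i}\varphi=\varphi$ immediately. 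This is marginally cleaner because it needs only the test functions already provided by the hypothesis and no auxiliary density claim. Your closing remark that (i) and (ii) are obtained from each other ``by exchanging the roles of $D$ and $D^*$'' is not quite right---the two statements concern orthogonality to a \emph{range} versus orthogonality to a \emph{kernel}, which are not formal duals.
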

\begin{proof}
\textit{Proof of (i).} First, we note that 
\begin{align*}
\varphi \in L^p_{\mathrm{inv}}(\Omega) \quad \Leftrightarrow \quad U_{h e_i}U_{y}\varphi=U_y\varphi  \quad \text{for all }y\in \R^d,h\in \R, i=1,...,d.
\end{align*} 
We consider $\varphi\in \cb{D^*\psi:\psi\in W^{1,q}(\Omega)^d}^{\bot}$ and we show that $\varphi\in L^p_{\mathrm{inv}}(\Omega)$ using the above equivalence. 
Let $\psi \in W^{1,q}(\Omega)$ and $i\in \cb{1,...,d}$. Then by the group property we have $U_{-h e_i}\psi-\psi=\int_{0}^h U_{-t e_i}D_i^*\psi dt$ and therefore
\begin{align*}
\expect{(U_{h e_i}\varphi-\varphi)\psi}=\expect{\varphi (U_{-he_i}\psi-\psi)}=\langle \varphi \int_{0}^h U_{-t e_i}D_i^*\psi dt\rangle=\int_{0}^h\expect{\varphi D^*_i(U_{-t e_i}\psi)}dt.
\end{align*}
Since $U_{-t e_i}\psi \in W^{1,q}(\Omega)$ for any $t\in [0,h]$, we obtain $\expect{\varphi D^*_i(U_{-t e_i}\psi)}=0$ and thus $U_{h e_i}\varphi = \varphi$. Furthermore, for any $y\in \R^d$, we have $\expect{(U_{h e_i}U_y \varphi - U_y \varphi)\psi}=\expect{(U_{h e_i}\varphi -\varphi)U_{-y}\psi}=0$ by the same argument.

\textit{Proof of (ii).} In view of $L^p_{\mathrm{pot}}(\Omega)=\mathrm{ker}(D^*)^{\bot}$ (see (\ref{orth})), it is sufficient to prove that the set $\cb{\varphi\in W^{1,q}(\Omega)^d: D^*\varphi=0}$ is dense in $\mathrm{ker} (D^*)$. This follows by an approximation argument as in \cite[Section 7.2]{jikov2012homogenization}.
Let $\varphi \in \mathrm{ker}(D^*)$ and we define for $t>0$
\begin{equation*}
\varphi^t(\omega)=\int_{\R^{d}}p_t(y)\varphi(\tau_y \omega)dy, \quad \text{where }p_t(y)=\frac{1}{\brac{4\pi t}^{\frac{d}{2}}}e^{-\frac{|y|^2}{4t}}.
\end{equation*}
Then the claimed density follows, since $\varphi^t\in W^{1,q}(\Omega)^d$, $D^*\varphi^t=0$ for any $t>0$ and $\varphi^t \rightarrow \varphi$ strongly in $L^q(\Omega)^d$ as $t\to 0$. The last statement can be seen as follows. By the continuity property of $U_y$, for any $\varepsilon>0$ there exists $\delta>0$ such that $\expect{|\varphi(\tau_y \omega)-\varphi(\omega)|^q}\leq \varepsilon$ for any $y\in B_{\delta}(0)$.
It follows that
\begin{align*}
\expect{|\varphi^t-\varphi|^q} & =\expect{\bigg|\int_{\R^{d}}p_t(y)\brac{\varphi(\tau_y \omega)-\varphi(\omega)}dy\bigg|^q}\\  & \leq \int_{\R^{d}}p_t(y)\expect{|\varphi(\tau_y \omega)-\varphi(\omega)|^q}dy \\ & =   \int_{B_{\delta}}p_t(y)\expect{|\varphi(\tau_y \omega)-\varphi(\omega)|^q}dy+\int_{\R^{d}\setminus B_{\delta}}p_t(y)\expect{|\varphi(\tau_y \omega)-\varphi(\omega)|^q}dy.
\end{align*}
The first term on the right-hand side of the above inequality is bounded by $\varepsilon$ as well as the second term for sufficiently small $t>0$.
\end{proof}
\begin{lemma}\label{lem6}
Let $p\in (1,\infty)$ and $Q\subset \R^d$ be open. Let $u\e \in L^p(\Omega)\otimes W^{1,p}(Q)$ be such that $u\e \overset{2}{\harpoon} u$ in $L^p(\Omega \times Q)$ and $\varepsilon \nabla u\e \overset{2}{\harpoon} 0$ in $L^p(\Omega \times Q)^d$. Then $u\in L^p_{{\mathrm{inv}}}(\Omega)\otimes L^p(Q)$.
\end{lemma}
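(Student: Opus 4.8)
The plan is to show that $u(\cdot,x)\in L^p_{\mathrm{inv}}(\Omega)$ for a.e.\ $x\in Q$; since $u\in L^p(\Omega\times Q)$ and $L^p_{\mathrm{inv}}(\Omega)\otimes L^p(Q)$ consists precisely of those $v\in L^p(\Omega\times Q)$ whose realizations $v(\cdot,x)$ belong to the closed subspace $L^p_{\mathrm{inv}}(\Omega)$ for a.e.\ $x$, this yields the claim. By Lemma~\ref{lemA}(i) it is enough to prove $\expect{u(\cdot,x)\,D^*\psi}=0$ for a.e.\ $x\in Q$ and every $\psi\in W^{1,q}(\Omega)^d$; by separability of $W^{1,q}(\Omega)^d$, boundedness of $D^*\colon W^{1,q}(\Omega)^d\to L^q(\Omega)$, and integrability of $x\mapsto\expect{u(\cdot,x)D^*\psi}$ on bounded subsets of $Q$, this reduces to showing, for each fixed $\psi\in W^{1,q}(\Omega)^d$,
\begin{equation*}
\expect{\int_Q u(\omega,x)\,(D^*\psi)(\omega)\,\eta(x)\,dx}=0\qquad\text{for all }\eta\in C_c^\infty(Q).
\end{equation*}

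The main step is to test the hypothesis $\varepsilon\nabla u\e\wt 0$ against the oscillating vector field $\unf^{*}(\psi\otimes\eta)(\omega,x)=\psi(\tau_{\frac{x}{\varepsilon}}\omega)\,\eta(x)$; this is legitimate since $\psi\otimes\eta\in\big(L^q(\Omega)\overset{a}{\otimes}L^q(Q)\big)^d$, so that Lemma~\ref{L:unf} applies and $\|\unf^{*}(\psi\otimes\eta)\|_{L^q(\Omega\times Q)^d}=\|\psi\|_{L^q(\Omega)^d}\|\eta\|_{L^q(Q)}$ is independent of $\varepsilon$. Using the adjoint relation and linearity of $\unf$,
\begin{equation*}
\varepsilon\expect{\int_Q\nabla u\e(\omega,x)\cdot\psi(\tau_{\frac{x}{\varepsilon}}\omega)\,\eta(x)\,dx}=\expect{\int_Q\unf(\varepsilon\nabla u\e)\cdot(\psi\otimes\eta)\,dx}\longrightarrow 0\quad\text{as }\varepsilon\to 0,
\end{equation*}
the convergence being exactly the hypothesis $\varepsilon\nabla u\e\wt 0$ tested against a fixed element of $L^q$. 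For a.e.\ $\omega$ the realization $x\mapsto\psi_i(\tau_{x/\varepsilon}\omega)$ lies in $W^{1,q}_{\mathrm{loc}}(\R^d)$ with $\partial_{x_i}\big[\psi_i(\tau_{x/\varepsilon}\omega)\big]=\frac{1}{\varepsilon}(D_i\psi_i)(\tau_{x/\varepsilon}\omega)$ --- this is the defining property of the generators $D_i$, cf.\ \cite[Section~7.2]{jikov2012homogenization}. Integrating by parts in $x$ (legitimate since $u\e(\omega,\cdot)\in W^{1,p}(Q)$ and $\psi(\tau_{\cdot/\varepsilon}\omega)\,\eta(\cdot)\in W^{1,q}(Q)$ has compact support in $Q$) and using $\sum_i D_i\psi_i=-D^*\psi$,
\begin{equation*}
\varepsilon\expect{\int_Q\nabla u\e\cdot\psi(\tau_{\frac{x}{\varepsilon}}\omega)\,\eta\,dx}=\expect{\int_Q(\unf u\e)(\omega,x)\,(D^*\psi)(\omega)\,\eta(x)\,dx}-\varepsilon\expect{\int_Q u\e\,\psi(\tau_{\frac{x}{\varepsilon}}\omega)\cdot\nabla\eta\,dx}.
\end{equation*}

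Finally I would pass to the limit $\varepsilon\to 0$ in this identity: the left-hand side vanishes by the preceding display; the last term on the right is $O(\varepsilon)$ and drops out, because $(u\e)$ is bounded in $L^p(\Omega\times Q)$ (Lemma~\ref{lemma_basics}(i)) while $\|\psi(\tau_{\cdot/\varepsilon}\,\cdot)\cdot\nabla\eta\|_{L^q(\Omega\times Q)}$ is $\varepsilon$-independent by the isometry of $\unf^{*}$; and since $\unf u\e\harpoon u$ weakly in $L^p(\Omega\times Q)$ and $(D^*\psi)\otimes\eta\in L^q(\Omega\times Q)$ is fixed, the first term on the right converges to $\expect{\int_Q u\,(D^*\psi)\,\eta\,dx}$. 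This establishes the displayed identity, and the reductions of the first paragraph then complete the proof. The only genuinely technical ingredient in this scheme is the commutation $\partial_{x_i}[\psi_i(\tau_{x/\varepsilon}\omega)]=\frac{1}{\varepsilon}(D_i\psi_i)(\tau_{x/\varepsilon}\omega)$, i.e.\ the weak differentiability of the realizations of functions in $W^{1,q}(\Omega)$ underlying the integration by parts; all the remaining steps are soft consequences of the isometry of $\unf$, weak convergence, and Lemmas~\ref{L:unf},~\ref{lemA} and~\ref{lemma_basics}.
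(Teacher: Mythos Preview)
Your proof is correct and follows essentially the same approach as the paper: test $\varepsilon\nabla u\e$ against the oscillating function $\unf^{*}(\psi\eta)$, integrate by parts using $\partial_{x_i}\big[\psi(\tau_{x/\varepsilon}\omega)\big]=\frac{1}{\varepsilon}(D_i\psi)(\tau_{x/\varepsilon}\omega)$, and pass to the limit to obtain the orthogonality relation that Lemma~\ref{lemA}(i) requires. The only cosmetic differences are that the paper works componentwise with a scalar $\varphi\in W^{1,q}(\Omega)$ and writes the derivative identity in the abstract form $\partial_i\big(\varepsilon\unf^{*}(\varphi\eta)\big)=\unf^{*}(D_i\varphi\,\eta+\varepsilon\varphi\,\partial_i\eta)$, whereas you phrase everything via realizations and bundle the components into a single vector $\psi$; your version is in fact slightly more explicit about the separability/measurable-slice reduction and the technical point underlying the integration by parts.
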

\begin{proof}
Consider a sequence $v\e=\varepsilon \mathcal{T}_{\varepsilon}^*(\varphi \eta)$ such that $\varphi\in W^{1,q}(\Omega)$ and $\eta\in C^{\infty}_c(Q)$. Note that $\mathcal{T}\e v\e=\varepsilon \varphi \eta$ and we have, for $i=1,...,d$ and as $\varepsilon\to 0$,
\begin{equation*}
\expect{\int_Q \partial_i u\e v\e dx}=\expect{\int_Q(\mathcal{T}\e \partial_i u\e) (\mathcal{T}\e v\e) dx}=\expect{\int_Q(\mathcal{T}\e \partial_i u\e) \varepsilon \varphi \eta dx}\rightarrow 0.
\end{equation*}
Moreover, it holds that $\partial_{i}v_{\varepsilon}= \mathcal{T}_{\varepsilon}^*(D_i \varphi \eta + \varepsilon \varphi \partial_i \eta)$ and therefore
\begin{align*}
\expect{\int_Q\partial_i u\e v\e dx}  =-\expect{\int_Q u\e \partial_i v\e dx} & =-\expect{\int_Q u\e \mathcal{T}_{\varepsilon}^*(D_i \varphi \eta + \varepsilon \varphi \partial_i \eta)dx}\\ &=
-\expect{\int_Q (\mathcal{T}\e u\e) D_i\varphi \eta+\varepsilon (\mathcal{T}\e u\e) \varphi \partial_i\eta dx}. 
\end{align*}
The last expression converges to $-\expect{\int_Q u D_i\varphi \eta dx}$ as $\varepsilon\to 0$.
As a result of this, $\expect{u(x)D_i\varphi}=0$ for almost every $x\in Q$ and therefore $u\in L^p_{{\mathrm{inv}}}(\Omega)\otimes L^p(Q)$ by Lemma \ref{lemA} (i).
\end{proof}
\begin{lemma}\label{lem7}
Let $p\in (1,\infty)$ and $Q\subset \R^d$ be open. Let $u\e$ be a bounded sequence in $L^p(\Omega)\otimes W^{1,p}(Q)$. Then there exists $u\in L^p_{{\mathrm{inv}}}(\Omega)\otimes W^{1,p}(Q)$ such that (up to a subsequence)
\begin{equation*}
u\e \overset{2}{\harpoon} u \text{ in }L^p(\Omega \times Q), \quad \pinv u\e\overset{2}{\harpoon} u \text{ in }L^p(\Omega \times Q),\quad \pinv \nabla u\e \overset{2}{\harpoon} \nabla u \text{ in }L^p(\Omega \times Q)^d.
\end{equation*}
In particular, it holds that $\pinv u\e \harpoon u$ weakly in $L^p_{\mathrm{inv}}(\Omega)\otimes W^{1,p}(Q)$.
\end{lemma}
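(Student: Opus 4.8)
The plan is to extract one subsequence via the weak two-scale compactness of Lemma~\ref{lemma_basics}(ii), to identify its limit with the help of Lemma~\ref{lem6}, and then to obtain the statements on $\pinv$ from the facts that $\pinv$ commutes with the unfolding operator $\unf$ and with the spatial gradient $\nabla$. Concretely: since $(u\e)$ is bounded in $L^p(\Omega)\otimes W^{1,p}(Q)$, both $(u\e)$ and $(\nabla u\e)$ are bounded in $L^p(\Omega\times Q)$, so by Lemma~\ref{lemma_basics}(ii) there are a subsequence (not relabeled), $u\in L^p(\Omega\times Q)$ and $\xi\in L^p(\Omega\times Q)^d$ with $u\e\wt u$ and $\nabla u\e\wt\xi$. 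Since $\|\varepsilon\nabla u\e\|_{L^p(\Omega\times Q)^d}\le\varepsilon\,C\to0$, we have $\varepsilon\nabla u\e\to0$ strongly, hence $\varepsilon\nabla u\e\wt0$, and Lemma~\ref{lem6} yields $u\in L^p_{\mathrm{inv}}(\Omega)\otimes L^p(Q)$, i.e. $\pinv u=u$.

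The core of the proof is the identity $\pinv\circ\unf=\pinv$, understood on $L^p(\Omega\times Q)$ and componentwise on $L^p(\Omega\times Q)^d$. I would verify it on product functions $\varphi\,\eta$ with $\varphi\in L^p(\Omega)$, $\eta\in L^p(Q)$ (these span a dense subspace and all operators involved are bounded): there $(\unf(\varphi\eta))(\omega,x)=\varphi(\tau_{-x/\varepsilon}\omega)\,\eta(x)$, so it is enough to show $\pinv(U_y\varphi)=\pinv\varphi$ for every fixed $y\in\R^d$, where $(U_y\varphi)(\omega)=\varphi(\tau_y\omega)$. This holds because $\tau_y$ is measure preserving and fixes every shift-invariant set, hence $\int_A U_y\varphi\,dP=\int_{\tau_y A}\varphi\,dP=\int_A\varphi\,dP=\int_A\pinv\varphi\,dP$ for every $A$ in the invariant $\sigma$-algebra, while $U_y\pinv\varphi=\pinv\varphi$ is invariant.

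Granting this, since $\pinv$ is a contraction and therefore weakly continuous, applying $\pinv$ to $\unf u\e\harpoon u$ gives $\pinv u\e=\pinv\unf u\e\harpoon\pinv u=u$ in $L^p(\Omega\times Q)$; applying it to $\unf\nabla u\e\harpoon\xi$ and using that $\pinv$ commutes with $\nabla$ (immediate on $L^p(\Omega)\overset{a}{\otimes}W^{1,p}(Q)$, then by density) gives $\nabla\pinv u\e=\pinv\nabla u\e=\pinv\unf\nabla u\e\harpoon\pinv\xi$ in $L^p(\Omega\times Q)^d$. Testing against functions $\psi(\omega)\zeta(x)$ with $\zeta\in C_c^\infty(Q)$ then shows that $u$ has weak spatial gradient $\pinv\xi$, so $u\in L^p(\Omega)\otimes W^{1,p}(Q)$; combined with $\pinv u=u$ (and the fact that $\pinv$ maps $L^p(\Omega)\otimes W^{1,p}(Q)$ into $L^p_{\mathrm{inv}}(\Omega)\otimes W^{1,p}(Q)$) this yields $u\in L^p_{\mathrm{inv}}(\Omega)\otimes W^{1,p}(Q)$ with $\nabla u=\pinv\xi$.

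It remains to upgrade the two weak convergences to two-scale convergences. Since $\pinv u\e$ and $\nabla\pinv u\e$ are invariant in $\omega$, and $\unf$ acts as the identity on $L^p_{\mathrm{inv}}(\Omega)\otimes L^p(Q)$, we have $\unf\pinv u\e=\pinv u\e\harpoon u$ and $\unf(\pinv\nabla u\e)=\pinv\nabla u\e\harpoon\nabla u$, which is exactly $\pinv u\e\wt u$ and $\pinv\nabla u\e\wt\nabla u$. Finally, $(\pinv u\e)$ is bounded in the reflexive space $L^p_{\mathrm{inv}}(\Omega)\otimes W^{1,p}(Q)$, and each of its weak limit points there must agree with its weak $L^p(\Omega\times Q)$-limit $u$, so $\pinv u\e\harpoon u$ weakly in $L^p_{\mathrm{inv}}(\Omega)\otimes W^{1,p}(Q)$. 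The main obstacle is the identity $\pinv\unf=\pinv$: this is where the precise setup enters (the measure-preserving action $\tau$, the description of $\pinv$ as conditional expectation with respect to the invariant $\sigma$-algebra, and the invariance of that $\sigma$-algebra under the single shifts $\tau_y$); the rest is routine, apart from the standard care needed to pass from elementary functions to all of $L^p(\Omega\times Q)$ by density and boundedness.
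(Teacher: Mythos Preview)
Your argument is correct and follows the same overall architecture as the paper: establish the commutation identity $\pinv\circ\unf=\pinv$, use Lemma~\ref{lem6} to see that the two-scale limit is shift-invariant, and combine these to identify the limits of $\pinv u\e$ and $\pinv\nabla u\e$.

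There are two minor differences worth noting. First, you prove $\pinv\unf=\pinv$ by verifying $\pinv U_y=\pinv$ directly from the defining property of the conditional expectation, whereas the paper argues by duality, using that $\unf^*\pinv^*=\pinv^*$ because $\mathrm{ran}(\pinv^*)\subset L^q_{\mathrm{inv}}(\Omega)$. Second, and more substantively, to identify $\nabla u$ you first pass to the two-scale limit $\xi$ of the full gradients $\nabla u\e$, then apply $\pinv$ and use an ordinary (non-oscillating) integration by parts on $\pinv u\e$; the paper instead applies Lemma~\ref{lem6} directly to $\pinv u\e$, obtains separate two-scale limits $v$ and $w$ for $\pinv u\e$ and $\nabla\pinv u\e$, and identifies $w=\nabla v$ by testing with the oscillating functions $\unf^*(\varphi\eta)$ (exploiting $\unf\pinv=\pinv$ so that the $\frac{1}{\varepsilon}$-term vanishes by invariance). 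Your route is slightly more economical here, since the auxiliary object $\xi$ lets you bypass the oscillating-test-function computation; the paper's route, on the other hand, never needs to introduce the two-scale limit of $\nabla u\e$ at this stage (that object appears only later in Proposition~\ref{prop1}). Both arrive at the same conclusion with comparable effort.
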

\begin{proof}
\textit{Step 1. Proof of the identity $\pinv \circ \mathcal{T}\e=\mathcal{T}\e\circ \pinv =\pinv$.} The second identity holds by definition of $\pinv$. To show that  $\pinv \circ \mathcal{T}\e=\pinv$, we consider $v\in L^p(\Omega \times Q)$, $\varphi\in L^q(\Omega)$ and $\eta\in L^q(Q)$. We have
\begin{align*}
\expect{\int_Q (\pinv\unf v) (\varphi \eta) dx} =\expect{\int_Q (\unf v) \pinv^* (\varphi \eta) dx} & =\expect{\int_Q v \pinv^* (\varphi \eta) dx} \\ & =\expect{\int_Q  (\pinv v) (\varphi \eta) dx},
\end{align*}
where we use the fact that $\unf^* P_{\mathrm{inv}}^*= P_{\mathrm{inv}}^*$ since the adjoint $P_{\mathrm{inv}}^*$ of $P_{\mathrm{inv}}$ satisfies $\mathrm{ran}(P_{\mathrm{inv}}^*)\subset L^q_{\mathrm{inv}}(\Omega)$. The claim follows by an approximation argument since $L^q(\Omega)\overset{a}{\otimes}L^q(Q)$ is dense in $L^q(\Omega\times Q)$.

\textit{Step 2. Convergence of $\pinv u\e$.} $\pinv$ is bounded and it commutes with $\nabla$, and therefore
\begin{equation*}
\limsup_{\varepsilon\to 0} \expect{\int_Q |\pinv u\e|^p+|\nabla \pinv u\e|^p dx}< \infty.
\end{equation*}
As a result of this and with help of Lemma \ref{lemma_basics} (ii) and Lemma \ref{lem6}, it follows that $\pinv u\e\overset{2}{\harpoon} v$ and $\nabla \pinv u\e\overset{2}{\harpoon} w$ (up to a subsequence), where $v\in L^p_{{\mathrm{inv}}}(\Omega)\otimes L^p(Q)$ and $w\in L^p_{{\mathrm{inv}}}(\Omega)\otimes L^p(Q)^d$.

Let $\varphi\in W^{1,q}(\Omega)$ and $\eta\in C^{\infty}_c(Q)$.
On the one hand, we have, as $\varepsilon\to 0$,
\begin{equation*}
\expect{\int_Q (\partial_i \pinv u\e) \mathcal{T}_{\varepsilon}^*(\varphi \eta) dx}=\expect{\int_Q\unf (\partial_i \pinv u\e) (\varphi\eta) dx}\rightarrow \expect{\int_Q w_i \varphi\eta dx}.
\end{equation*}
On the other hand, using $\partial_i\mathcal T\e^*(\varphi\eta)=\frac1\varepsilon\mathcal T\e^*(\eta D_i\varphi)+\mathcal T\e^*(\varphi\partial_i\eta)$ and $\mathcal T\e\pinv=\pinv$,
\begin{equation*}
\expect{\int_Q (\partial_i \pinv u\e) \mathcal{T}_{\varepsilon}^*(\varphi \eta) dx}=-\frac{1}{\varepsilon}\expect{\int_Q (\pinv u\e) (D_i\varphi\eta) dx}-\expect{\int_Q(\pinv u\e) \varphi \partial_i\eta dx}.
\end{equation*}
The first term on the right-hand side vanishes since $\pinv u\e(\cdot,x)\in L^p_{{\mathrm{inv}}}(\Omega)$ for almost every $x\in Q$ and by (\ref{orth}). The second term converges to $-\expect{\int_Q v \varphi \partial_i \eta dx}$ as $\varepsilon\rightarrow 0$. Consequently, we obtain $w=\nabla v$ and therefore $v\in L^p_{{\mathrm{inv}}}(\Omega)\otimes W^{1,p}(Q)$. Moreover, using Step 1, we have $\pinv u\e \harpoon u$ weakly in $L^p_{\mathrm{inv}}(\Omega)\otimes W^{1,p}(Q)$.

\textit{Step 3. Convergence of $u\e$.} Since $u\e$ is bounded, by  Lemma~\ref{lemma_basics}~(ii) and Lemma~\ref{lem6} there exists $u\in L^p_{{\mathrm{inv}}}(\Omega)\otimes L^p(Q)$ such that $u\e \overset{2}{\harpoon} u$ in $L^p(\Omega \times Q)$. Also, $\pinv$ is a linear and bounded operator which, together with Step 1, implies that $\pinv u\e\harpoon u$. Using this, we conclude that $u=v$.
\end{proof}
\begin{proof}[Proof of Proposition \ref{prop1}]
Lemma \ref{lem7} implies that $u\e \overset{2}{\harpoon} u$ in $L^p(\Omega \times Q)$ (up to a subsequence), where $u\in L^p_{{\mathrm{inv}}}(\Omega)\otimes W^{1,p}(Q)$. Moreover, it follows that there exists $v\in L^p(\Omega \times Q)^d$ such that $\nabla u\e \overset{2}{\harpoon} v$ in $L^p(\Omega \times Q)^d$ (up to another subsequence). We show that $\chi:=v-\nabla u\in L^p_{\mathrm{pot}}(\Omega)\otimes L^p(Q)$.

Let $\varphi \in W^{1,q}(\Omega)^d$ with $D^*\varphi=0$ and $\eta\in C^{\infty}_c(Q)$. We have, as $\varepsilon\to 0$,
\begin{equation}\label{eq4321}
\expect{\int_Q \nabla u\e \cdot \mathcal{T}_{\varepsilon}^*(\varphi \eta) dx} = \expect{\int_Q \unf \nabla u\e \cdot \varphi \eta dx} \rightarrow \expect{\int_{Q}v \cdot \varphi \eta dx}.
\end{equation}
On the other hand,
\begin{align}\label{eq1234}
\begin{split}
\expect{\int_Q \nabla u\e \cdot \mathcal{T}_{\varepsilon}^*(\varphi \eta) dx}  &=-\expect{\int_Q u\e \sum_{i=1}^d \mathcal{T}_{\varepsilon}^*(\frac{1}{\varepsilon}  \eta D_i \varphi_i+\varphi_i \partial_i\eta) dx}  
\\ &= \frac{1}{\varepsilon} \expect{\int_Q (\unf u\e) ( \eta D^*\varphi) dx}-\expect{\int_Q (\unf u\e) \sum_{i=1}^d \varphi_i\partial_i \eta dx}.
\end{split}
\end{align}
Above, the first term on the right-hand side vanishes by assumption and the second converges to $ \expect{\int_Q\nabla u\cdot \varphi \eta}$ as $\varepsilon\rightarrow 0$. Using
(\ref{eq1234}), (\ref{eq4321}) and Lemma \ref{lemA} (ii) we  complete the proof.
\end{proof}
\begin{proof}[Proof of Lemma \ref{Nonlinear_recovery}]
For $\chi \in L^p_{\mathrm{pot}}(\Omega)\otimes L^p(Q)$ and $\delta>0$, by definition of the space $L^p_{\mathrm{pot}}(\Omega)\otimes L^p(Q)$ and by density of $\mathrm{ran}(D)$ in $L^p_{\mathrm{pot}}(\Omega)$, we find $g_{\delta}=\sum_{i=1}^{n(\delta)}\varphi^{\delta}_i \eta^{\delta}_i$ with $\varphi_i^{\delta} \in W^{1,p}(\Omega)$ and $\eta^{\delta}_i\in C^{\infty}_c(Q)$ such that
\begin{equation*}
\|\chi - Dg_{\delta} \|_{L^p(\Omega\times Q)^d} \leq \delta.
\end{equation*}
Note that we can choose $\varphi_i^{\delta}$ above so that $\varphi_i^{\delta}\in L^{\theta}(\Omega)$. This can be seen by a standard truncation and mollification argument (see \cite[Lemma 2.2]{bourgeat1994stochastic} for the $L^2$-case) that we present here for the convenience of the reader. For a given $\varphi \in W^{1,p}(\Omega)$, by density of $L^{\infty}(\Omega)$ in $L^p(\Omega)$, we find a sequence $\varphi_{k}\in L^{\infty}(\Omega)$ such that $\varphi_k \to \varphi$ in $L^p(\Omega)$. For a sequence of standard mollifiers $\rho_n \in C_{c}^{\infty}(\R^d)$, $\rho_n \geq 0$, we define 
\begin{equation*}
\varphi_{k}^{n}= \int_{\R^d}\rho_n(y)U_{y}\varphi_k dy, \quad \varphi^{n}= \int_{\R^d}\rho_n(y)U_y\varphi dy.
\end{equation*} 
It holds $\varphi_{k}^n \in L^{\infty}(\Omega)\cap W^{1,p}(\Omega)$, $D_i\varphi_{k}^n =\int_{\R^d}-\partial_{i}\rho_n(y) U_y \varphi_k dy$ and $D_i \varphi^n = \int_{\R^d}-\partial_i \rho_n(y)U_y \varphi dy=\int_{\R^d} \rho_n(y)U_y D_i\varphi dy$. Similarly as in the proof of Lemma \ref{lemA} (ii), it follows that $D\varphi^n \to D\varphi$ in $L^p(\Omega)^d$ as $n\to \infty$. In the following we show that for fixed $n\in \N$, $D_i\varphi_k^n\to D_i\varphi^n$ in $L^p(\Omega)$ as $k\to \infty$, which yields the claim (up to extraction of a subsequence $k(n)$). We have, as $k\to \infty$,
\begin{equation*}
\expect{|D_i\varphi_k^n - D_i\varphi^n|^p}= \expect{\big| \int_{\R^d}-\partial_i \rho_n(y) \brac{U_y \varphi_k -U_y \varphi} dy \big|^p}\leq c(n) \expect{|\varphi_k-\varphi|^p}\to 0,
\end{equation*}
where in the last inequality we use that $\partial_i \rho_n$ is compactly supported and $L^{\infty}$, and Jensen's inequality. This means that in the definition of $g_{\delta}$ above, we can choose $\varphi_{i}^{\delta}\in L^{\theta}(\Omega)\cap W^{1,p}(\Omega)$.

We define $g_{\delta,\varepsilon}= \varepsilon \unf^{-1} g_{\delta}$ and note that $g_{\delta,\varepsilon} \in L^p(\Omega)\otimes W_0^{1,p}(Q)\cap L^{\theta}(\Omega\times Q)$ and $\nabla g_{\delta,\varepsilon}=\unf^{-1}D g_{\delta}+\unf^{-1}\varepsilon\nabla g_{\delta}$. As a result of this and with help of the isometry property of $\unf^{-1}$, the claim of the lemma follows.
\end{proof}
\section{Proof of Theorem \ref{s3_thm_5}}\label{Section_3.2}
Before presenting the main proof, we provide three auxiliary lemmas. Lemma \ref{lem:709:p} provides the reduction of the $\Lambda$-convex gradient flows to convex gradient flows. Lemmas \ref{lem:99:a2} and \ref{lem:128:a} provide a suitable recovery sequence that is helpful in the treatment of the term $\int_{0}^{T}\widetilde{\E}\e^*(t,-D\rcal\e(\dot{u}\e(t)))dt$ in \eqref{eq:509:p} (cf. \eqref{eq:738}). 
\begin{lemma}[Convex reduction]\label{lem:709:p}
Let the assumptions of Theorem \ref{s3_thm_5} be satisfied. Let $\widetilde{\E}\e:[0,T]\times Y \to \R\cup \cb{\infty}$ and $\widetilde{\E}\h:[0,T]\times Y_0\to \R\cup \cb{\infty}$ be given by
\begin{equation*}
\widetilde{\E}\e(t,u)= e^{2\Lambda t}\E\e(e^{-\Lambda t}u)-\Lambda \rcal\e(u), \quad \widetilde{\E}\h(t,u)= e^{2\Lambda t}\E\h(e^{-\Lambda t}u)-\Lambda \rcal\h(u). 
\end{equation*}
Then:
\begin{enumerate}[label=(\roman*)] 
\item $\widetilde{\E}\e$ and $\widetilde{\E}\h$ are convex normal integrands (see Definition \ref{def:appen}).
\item $y \in H^1(0,T; Y)$ satisfies \eqref{eq:317} if and only if $u(t):=e^{\Lambda t}y(t)$ satisfies 
\begin{equation}\label{eq:738}
\rcal\e(u(T))+\int_{0}^T \widetilde{\E}\e(t,u(t))+\widetilde{\E}^*\e(t,-D\rcal\e(\dot{u}(t)))dt = \rcal\e(u(0)),
\end{equation}
where $\widetilde{\E}\e^*(t,\cdot)$ denotes the convex conjugate of $\widetilde{\E}\e(t,\cdot)$.
\item $y\in H^1(0,T; Y_0)$ satisfies \eqref{eq:318} if and only if $u(t):=e^{\Lambda t} y(t)$ satisfies
\begin{equation*}
\rcal\h(u(T))+\int_{0}^T \widetilde{\E}\h(t,u(t))+\widetilde{\E}^*\h(t,-D\rcal\h(\dot{u}(t)))dt = \rcal\h(u(0)),
\end{equation*}
where $\widetilde{\E}\h^*(t,\cdot)$ denotes the convex conjugate of $\widetilde{\E}\h(t,\cdot)$.
\end{enumerate}
\end{lemma}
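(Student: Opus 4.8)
The plan is to establish (i) by a direct manipulation of the defining formulas, and (ii)--(iii) by combining the known well-posedness and apriori estimate for the $\Lambda$-convex gradient flow with the Fenchel(--Young) equivalence for convex gradient flows. I treat only (i) and (ii) in detail; (iii) is verbatim the same argument with $(Y,\E\e,\rcal\e)$ replaced by $(Y_0,\E\h,\rcal\h)$, using that $\E\h(\cdot)-\Lambda\rcal\h(\cdot)$ is convex with the same $\Lambda$.

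\emph{Step 1 (convexity and normality, part (i)).} Fix $t\in[0,T]$. Since $u\mapsto e^{-\Lambda t}u$ is linear and $\E\e$ is $\Lambda$-convex w.r.t. $\rcal\e$, i.e. $\E\e-\Lambda\rcal\e$ is convex, one computes
$\widetilde{\E}\e(t,u)=e^{2\Lambda t}\big(\E\e(e^{-\Lambda t}u)-\Lambda\rcal\e(e^{-\Lambda t}u)\big)+e^{2\Lambda t}\Lambda\rcal\e(e^{-\Lambda t}u)-\Lambda\rcal\e(u)$, and since $\rcal\e$ is quadratic, $e^{2\Lambda t}\rcal\e(e^{-\Lambda t}u)=\rcal\e(u)$, so the last two terms cancel and $\widetilde{\E}\e(t,u)=e^{2\Lambda t}(\E\e-\Lambda\rcal\e)(e^{-\Lambda t}u)$, a nonnegative multiple of a convex functional composed with a linear map: hence convex in $u$. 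For the normal-integrand property one checks joint $(t,u)$-measurability: $t\mapsto e^{-\Lambda t}$ is continuous, $\E\e$ and $\rcal\e$ are (l.s.c. and) normal integrands in the sense already used in the paper (via $V,f,r$), and compositions/sums preserve this; lower semicontinuity in $u$ for fixed $t$ follows from that of $\E\e$ and continuity of $\rcal\e$. The same applies to $\widetilde{\E}\h$.

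\emph{Step 2 (the EVI in Fenchel form, part (ii)).} Suppose $y\in H^1(0,T;Y)$ solves \eqref{eq:317} with $y(0)=y^0$. Set $u(t)=e^{\Lambda t}y(t)$, so $u\in H^1(0,T;Y)$ and $\dot u(t)=\Lambda u(t)+e^{\Lambda t}\dot y(t)$. The key identity is the Frenchel/subdifferential equivalence: \eqref{eq:317} holding for a.e. $t$ is equivalent to $-D\rcal\e(\dot y(t))\in\partial_F\E\e(y(t))$, and for a $\Lambda$-convex energy this is in turn equivalent to $-D\rcal\e(\dot u(t))\in\partial\widetilde{\E}\e(t,u(t))$ (ordinary convex subdifferential), because $\widetilde{\E}\e(t,\cdot)$ absorbs the $\Lambda$-correction by construction and the chain rule $\tfrac{d}{dt}e^{\Lambda t}=\Lambda e^{\Lambda t}$ exactly produces the shift $\dot y\mapsto \dot u$; here one uses $(D\rcal\e)^*=D\rcal\e$ (quadratic $\rcal\e$) to rewrite $D\rcal\e(e^{\Lambda t}\dot y)$ appropriately. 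By the Fenchel--Young (in)equality, for a.e. $t$,
\[
\widetilde{\E}\e(t,u(t))+\widetilde{\E}\e^*(t,-D\rcal\e(\dot u(t)))\ge \expect{-D\rcal\e(\dot u(t)),u(t)}_{Y^*,Y},
\]
with equality iff $-D\rcal\e(\dot u(t))\in\partial\widetilde{\E}\e(t,u(t))$. Since $\rcal\e$ is quadratic, the chain rule gives $\tfrac{d}{dt}\rcal\e(u(t))=\expect{D\rcal\e(u(t)),\dot u(t)}_{Y^*,Y}=\expect{D\rcal\e(\dot u(t)),u(t)}_{Y^*,Y}$, so integrating the pointwise identity over $(0,T)$ yields exactly \eqref{eq:738}. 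Conversely, if \eqref{eq:738} holds then, by the Fenchel--Young inequality, the nonnegative integrand $\widetilde{\E}\e(t,u(t))+\widetilde{\E}\e^*(t,-D\rcal\e(\dot u(t)))-\expect{D\rcal\e(\dot u(t)),u(t)}_{Y^*,Y}\ge0$ integrates to zero, hence vanishes a.e.; this forces equality in Fenchel--Young, i.e. $-D\rcal\e(\dot u(t))\in\partial\widetilde{\E}\e(t,u(t))$ a.e., which unwinds back to \eqref{eq:317} for $y(t)=e^{-\Lambda t}u(t)$. One small point to justify is integrability: the apriori bound \eqref{eq:323}/\eqref{eq:295} ensures $\dot y\in L^2(0,T;Y)$, hence $\dot u\in L^2(0,T;Y)$ and $D\rcal\e(\dot u)\in L^2(0,T;Y^*)$, and $t\mapsto\widetilde{\E}\e(t,u(t))$ is integrable because $\E\e(y(t))$ is bounded on $[0,T]$ by \eqref{eq:323}; so all terms in \eqref{eq:738} are finite and the integration-by-parts/chain-rule steps are licit in $H^1(0,T;Y)$.

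\emph{Main obstacle.} The routine parts (convexity, growth/measurability for normality, the final integration) are straightforward. The genuine content is the chain-rule/Fenchel bookkeeping in Step 2: one must verify carefully that the De Giorgi-type identity \eqref{eq:738} is \emph{equivalent} to \eqref{eq:317} and not merely implied by it, which relies on (a) the chain rule $\tfrac{d}{dt}\widetilde{\E}\e(t,u(t))$ being available along $H^1$-trajectories for the $\Lambda$-convex energy (this is where the reformulation via the convex $\widetilde{\E}\e(t,\cdot)$ is essential, since the convex chain rule is classical), and (b) the identity $(D\rcal\e)^*=D\rcal\e$ to move the time-derivative between the two slots of the pairing. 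I expect the cleanest route is to first record the convex chain rule for $t\mapsto\widetilde{\E}\e(t,u(t))$ (using $\partial\widetilde{\E}\e$ and the explicit $\partial_t$-term coming from the $e^{2\Lambda t}$ prefactor, which is absolutely continuous), then read off the pointwise energy-dissipation balance, and only at the end integrate — exactly mirroring the convex-case argument of \cite{attouch1978convergence,attouch1984variational} adapted to the time-dependent energy.
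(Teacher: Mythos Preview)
Your proposal is correct and follows essentially the same strategy as the paper: convexity of $\widetilde{\E}\e(t,\cdot)$ from $\Lambda$-convexity (your explicit rewriting $\widetilde{\E}\e(t,u)=e^{2\Lambda t}(\E\e-\Lambda\rcal\e)(e^{-\Lambda t}u)$ is in fact cleaner than the paper's one-line appeal to $\Lambda$-convexity), measurability by composition, then Fenchel--Young plus the quadratic chain rule $\tfrac{d}{dt}\rcal\e(u)=\expect{D\rcal\e(\dot u),u}$ and integration.

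One comparison worth noting: the paper does \emph{not} use the energy chain rule you propose in your ``Main obstacle'' paragraph. Instead it performs the passage from \eqref{eq:317} to the convex inclusion $-D\rcal\e(\dot u(t))\in\partial\widetilde{\E}\e(t,u(t))$ by a direct algebraic manipulation of the EVI: expand $\rcal\e(y-\widetilde y)$ using the quadratic form, regroup to obtain $\expect{D\rcal\e(\dot y+\Lambda y),y-\widetilde y}+\E\e(y)-\Lambda\rcal\e(y)\le \E\e(\widetilde y)-\Lambda\rcal\e(\widetilde y)$, multiply by $e^{2\Lambda t}$, and substitute $u=e^{\Lambda t}y$, $\hat y=e^{\Lambda t}\widetilde y$. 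This yields the convex EVI $\expect{D\rcal\e(\dot u),u-\hat y}+\widetilde{\E}\e(t,u)\le\widetilde{\E}\e(t,\hat y)$ directly, after which Fenchel--Young gives the pointwise identity and integration gives \eqref{eq:738}. So the ``genuine content'' you flag is handled by elementary algebra rather than by a time-dependent convex chain rule; your suggested route would work but is heavier than needed.
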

\begin{proof}
\textit{Proof of (i).}  For fixed $t$, convexity of $\widetilde{\E}\e(t,\cdot)$ follows from $\Lambda$-convexity of $\E\e$. $\widetilde{\E}\e(t,\cdot)$ is proper and l.s.c. Indeed, this follows by continuity of $\rcal\e$ and by the fact that $\E\e$ is proper and l.s.c.
In the following we show that $\widetilde{\E}\e$ is $\mathcal{L}(0,T)\otimes \mathcal{B}(Y)$-measurable that implies the claim for $\widetilde{\E}\e$. First, we note that $-\Lambda \rcal\e$ is $\mathcal{B}(Y)$-measurable since it is continuous, therefore it is sufficient to show that the mapping $(t,u)\mapsto e^{2\Lambda t}\E\e(e^{-\Lambda t}u)$ is $\mathcal{L}(0,T)\otimes \mathcal{B}(Y)$-measurable.
We note that $\E\e(e^{-\Lambda t}u)$ is the composition of the continuous mapping $(t,u)\mapsto e^{-\Lambda t}u$ (thus $\brac{\mathcal{B}(0,T)\otimes \mathcal{B}(Y),\mathcal{B}(Y)}$-measurable) and the l.s.c. functional $\E\e$ that is, thus, $\mathcal{B}(Y)$-measurable. As a result of this, it is $\mathcal{B}(0,T)\otimes \mathcal{B}(Y)$-measurable. Finally, the expression $e^{2 \Lambda t}\E\e(e^{-\Lambda t}u)$ is a product of a continuous and a measurable functional and therefore it is $\mathcal{L}(0,T)\otimes \mathcal{B}(Y)$-measurable. For $\widetilde{\E}\h$, the claim follows analogously.

\textit{Proof of  (ii).} Since $\rcal\e$ is quadratic we have  $\rcal\e(\widetilde y)=\frac12\expect{D\rcal\e(\widetilde y),\widetilde y}_{Y^*,Y}$. Combined  with \eqref{eq:317}, a simple rearrangement  yields for all $\widetilde{y}\in Y$,
\begin{equation*}
\expect{D\rcal\e\brac{\dot{y}(t)+\Lambda y(t)},y(t)-\widetilde{y}}_{Y^*,Y}+ \E\e(y(t))-\Lambda \rcal\e(y(t)) \leq \E\e(\widetilde{y})-\Lambda \rcal\e(\widetilde{y}).
\end{equation*}
We multiply the above inequality with $e^{2\Lambda t}$ and use linearity of $D\rcal\e$ (resp. quadratic structure of $\rcal\e$) to obtain,
\begin{eqnarray*}
  & & \expect{D\rcal\e \brac{e^{\Lambda t}\dot{y}(t)+\Lambda e^{\Lambda t}y(t)},e^{\Lambda t}(y(t)-\widetilde{y})}_{Y^*,Y}\\ & & + e^{2\Lambda t}\E\e(e^{-\Lambda t}e^{\Lambda t}y(t))-\Lambda \rcal\e(e^{\Lambda t}y(t)) \\ & \leq & e^{2\Lambda t} \E\e(e^{-\Lambda t}e^{\Lambda t}\widetilde{y}) -\Lambda \rcal\e(e^{\Lambda t}\widetilde y) \quad \text{for all }\widetilde y \in Y.
\end{eqnarray*}
With $u(t)=e^{\Lambda t}y(t)$, the definition of $\widetilde{\E}\e$, and with the test-function $\widetilde y=e^{-\Lambda t}\hat y$, the above
inequality reads
\begin{equation*}
\expect{D\rcal\e(\dot{u}(t)),u(t)-\hat{y}}_{Y^*,Y}+\widetilde{\E}\e(t,u(t))\leq \widetilde{\E}\e(t,\hat{y}) \quad \text{for all }\hat{y}\in Y,
\end{equation*}
where we used that $\dot{u}(t)=e^{\Lambda t}\dot{y}(t)+\Lambda e^{\Lambda t} y(t)$. Since $\widetilde{\E}\e(t,\cdot)$ is convex for each $t$, the Fenchel equivalence implies that $u$ satisfies for a.e.~$t\in(0,T)$,
\begin{equation}\label{eq:767}
  \expect{D\rcal\e(\dot{u}(t)),u(t)}_{Y^*,Y}+\widetilde{\E}\e(t,u(t))+\widetilde{\E}^*\e(t,-D\rcal\e( \dot{u}(t)))=0.
\end{equation}
Since $\frac{d}{dt}\rcal\e(u(t))=\expect{D\rcal\e(u(t)),\dot{u}(t)}_{Y^*,Y}=\expect{D\rcal\e(\dot{u}(t)),u(t)}_{Y^*,Y}$, integration of the above identity over 
$(0,T)$ yields \eqref{eq:738}. On the other hand, if \eqref{eq:738} holds, then we have
\begin{equation*}
\int_{0}^T\expect{D\rcal\e(\dot{u}(t)),u(t)}_{Y^*,Y} +\widetilde{\E}\e(t,u(t))+\widetilde{\E}^*\e(t,-D\rcal\e(\dot{u}(t)))dt = 0.
\end{equation*} 
The integrand on the left-hand side is nonnegative by the definition of the convex conjugate and therefore it follows that $u$ satisfies \eqref{eq:767}. This completes the proof.

\textit{Proof of (iii).} The argument is the same as in part (ii).
\end{proof}
\begin{remark}[Extended unfolding] For $p\in (1,\infty)$, the stochastic unfolding operator $\unf: L^p(\Omega\times Q)\to L^p(\Omega\times Q)$ can be extended to a (not relabeled) linear isometry $\unf: L^p(0,T;L^p(\Omega\times Q))\to L^p(0,T; L^p(\Omega\times Q))$. In particular, for functions of the form $u=\eta \varphi\in L^p(0,T;L^p(\Omega\times Q))$ with $\eta \in L^p(0,T)$ and $\varphi \in L^p(\Omega\times Q)$, we define the unfolding by 
\begin{equation*}
\unf u(t,\cdot)=\eta(t)\unf \varphi(\cdot).
\end{equation*}   
By the density of $\cb{\sum_{i}\eta_i\varphi_i:\; \eta_i \in L^p(0,T),\; \varphi_i \in L^p(\Omega\times Q)}$ in $L^p(0,T;L^p(\Omega\times Q))$ we may extend the unfolding operator to a uniquely determined isometry on $L^p(0,T; L^p(\Omega\times Q))$. In the following, we use this extension.
\end{remark}

\begin{lemma}[Recovery sequence]\label{lem:99:a2}
Let $p\in (1,\infty)$, $\theta \in [2,\infty)$ and $Q\subset \R^d$ be open and bounded. Let $w\in L^p(0,T; L^p_{\mathrm{inv}}(\Omega)\otimes W^{1,p}_0(Q)) \cap L^{\theta}(0,T;L^{\theta}_{\mathrm{inv}}(\Omega)\otimes L^{\theta}(Q))$ and $\chi \in L^p(0,T; L^p\p(\Omega)\otimes L^p(Q))$. Then, there exists $w\e \in L^p(0,T; L^p(\Omega)\otimes W_0^{1,p}(Q)) \cap L^{\theta}(0,T; L^{\theta}(\Omega\times Q))$ such that, as $\varepsilon\to 0$,
\begin{align*}
& \unf w\e \to w \quad \text{strongly in }L^{\theta}(0,T; L^{\theta}(\Omega\times Q)),\\
& \unf \nabla w\e \to \nabla w+\chi \quad \text{strongly in }L^p(0,T; L^p(\Omega\times Q)^d).
\end{align*}
\end{lemma}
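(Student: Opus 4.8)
The plan is to construct $w_\varepsilon$ by a diagonal/density argument, building on the two recovery-sequence tools already available: Lemma~\ref{Nonlinear_recovery} (which produces approximate correctors with vanishing $L^\theta$-norm) and the elementary observation that $\unf^{-1}$ is a linear isometry commuting with the relevant structure. First I would reduce to simple tensor data: by density, choose for each $\delta>0$ a finite sum $w^\delta = \sum_k \eta_k^\delta \zeta_k^\delta$ with $\eta_k^\delta\in C^\infty_c(0,T)$ and $\zeta_k^\delta\in L^p_{\mathrm{inv}}(\Omega)\otimes W^{1,p}_0(Q)$ (and, using that $L^\theta$-nice elements are dense, also $\zeta_k^\delta\in L^\theta_{\mathrm{inv}}(\Omega)\otimes L^\theta(Q)$) such that $w^\delta\to w$ in both $L^p(0,T;L^p(\Omega)\otimes W^{1,p}_0(Q))$ and $L^\theta(0,T;L^\theta(\Omega\times Q))$; and similarly $\chi^\delta = \sum_j \beta_j^\delta\,\kappa_j^\delta$ with $\beta_j^\delta\in C^\infty_c(0,T)$ and $\kappa_j^\delta\in L^p_{\mathrm{pot}}(\Omega)\otimes L^p(Q)$ converging to $\chi$ in $L^p(0,T;L^p(\Omega\times Q)^d)$.

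Next, for the macroscopic part I would simply take $\unf^{-1} w^\delta$ — since $w^\delta(t,\cdot)\in L^p_{\mathrm{inv}}(\Omega)\otimes W^{1,p}_0(Q)$ and $\unf$ acts trivially on shift-invariant functions (this is the identity $\unf\circ\pinv=\pinv$ from Step~1 of Lemma~\ref{lem7}, so $\unf^{-1}$ fixes invariant tensors), we get $\unf(\unf^{-1}w^\delta)=w^\delta$ and $\unf\nabla(\unf^{-1}w^\delta)=\nabla w^\delta$. For the corrector part I would invoke Lemma~\ref{Nonlinear_recovery} applied to each fixed $\kappa_j^\delta$: it yields $g_{\delta',\varepsilon}(\kappa_j^\delta)\in L^p(\Omega)\otimes W^{1,p}_0(Q)$ with $\|g_{\delta',\varepsilon}(\kappa_j^\delta)\|_{L^\theta(\Omega\times Q)}\le \varepsilon c(\delta')$ and $\limsup_\varepsilon\|\unf\nabla g_{\delta',\varepsilon}(\kappa_j^\delta)-\kappa_j^\delta\|_{L^p(\Omega\times Q)^d}\le\delta'$. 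Setting $w_{\delta,\delta',\varepsilon}(t) := \unf^{-1}w^\delta(t) + \sum_j \beta_j^\delta(t)\, g_{\delta',\varepsilon}(\kappa_j^\delta)$, the time-integrability is clear (finite sums of bounded time-factors times fixed spatial functions), the $L^\theta$-discrepancy of $\unf w_{\delta,\delta',\varepsilon}$ from $w^\delta$ is $O(\varepsilon)$ uniformly in $t$ (since each $\|g_{\delta',\varepsilon}(\kappa_j^\delta)\|_{L^\theta}=O(\varepsilon)$ and $\unf$ is an isometry), and $\unf\nabla w_{\delta,\delta',\varepsilon} = \nabla w^\delta + \sum_j\beta_j^\delta\,\unf\nabla g_{\delta',\varepsilon}(\kappa_j^\delta)$ is within $\delta'(\sum_j\|\beta_j^\delta\|_{L^p(0,T)})$ of $\nabla w^\delta+\chi^\delta$ in $L^p(0,T;L^p(\Omega\times Q)^d)$ in the $\limsup_\varepsilon$ sense.

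Finally I would run a standard diagonalization: combining the three error scales — $\|w^\delta-w\|$ and $\|\chi^\delta-\chi\|$ tending to $0$ as $\delta\to0$, the corrector-approximation error $O(\delta')$, and the $O(\varepsilon)$ perturbation — one picks $\delta=\delta(\varepsilon)\to0$, $\delta'=\delta'(\varepsilon)\to0$ slowly enough (so that $c(\delta'(\varepsilon))\varepsilon\to0$) and relabels $w_\varepsilon := w_{\delta(\varepsilon),\delta'(\varepsilon),\varepsilon}$, obtaining the two claimed strong two-scale convergences. The main obstacle, and the only point requiring care, is that Lemma~\ref{Nonlinear_recovery} is stated for a single fixed $\chi\in L^p_{\mathrm{pot}}(\Omega)\otimes L^p(Q)$ with time-independent data, so one must handle the time dependence by hand through the tensor decomposition with smooth (hence bounded) time factors $\beta_j^\delta$ and keep the constant $c(\delta')$ from blowing up too fast relative to $\varepsilon$; once the bookkeeping of the three parameters $\varepsilon,\delta,\delta'$ is organized, everything else is routine isometry estimates and triangle inequalities.
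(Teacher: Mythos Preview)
Your proposal is correct and follows essentially the same route as the paper: approximate $\chi$ in $L^p(0,T;L^p_{\mathrm{pot}}(\Omega)\otimes L^p(Q))$ by finite tensor sums with smooth time factors, apply Lemma~\ref{Nonlinear_recovery} to each spatial factor, add the resulting correctors to the macroscopic part, and diagonalize over the approximation parameters.

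The one difference is that you also approximate $w$ by tensor sums $w^\delta$, which is unnecessary and slightly complicates matters. Since $w(t)$ is already shift-invariant for a.e.\ $t$, the identity $\unf\circ\pinv=\pinv$ that you cite from Lemma~\ref{lem7} gives $\unf w=w$ and $\unf\nabla w=\nabla w$ directly for the \emph{original} $w$; the paper therefore simply sets $w^k_{\delta,\varepsilon}:=w+\sum_i\eta^{k,i}g^{k,i}_{\delta,\varepsilon}$ and runs the diagonalization over $(k,\delta,\varepsilon)$. Your extra layer is harmless in principle, but it forces you to justify a simultaneous-density claim (finite tensors with factors in $L^p_{\mathrm{inv}}(\Omega)\otimes W^{1,p}_0(Q)\cap L^\theta_{\mathrm{inv}}(\Omega)\otimes L^\theta(Q)$ dense in the intersection space) that the paper sidesteps entirely.
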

\begin{proof}
Since $\chi \in  L^p(0,T; L^p_{\mathrm{pot}}(\Omega)\otimes L^p(Q))$, we find a sequence $\psi^{k}=\sum_{i=1}^{k}\eta^{k,i} \chi^{k,i}$ with $\eta^{k,i}\in C^{\infty}_c(0,T)$ and $\chi^{k,i} \in L^p_{\mathrm{pot}}(\Omega)\otimes L^p(Q)$, such that
\begin{equation*}
\|\psi^k-\chi\|_{L^p(0,T;L^p(\Omega\times Q)^d)}\to 0 \quad \text{as }k \to \infty.
\end{equation*}
In view of Lemma \ref{Nonlinear_recovery}, for each $\chi^{k,i}$ we find $g_{\delta, \varepsilon}^{k,i}\in (L^p(\Omega)\otimes W^{1,p}_0(Q))\cap L^{\theta}(\Omega\times Q)$ such that
\begin{equation*}
\|g_{\delta,\varepsilon}^{k,i}\|_{L^{\theta}(\Omega\times Q)} \leq \varepsilon c_{k,i}(\delta), \quad \limsup_{\varepsilon\to 0}\|\mathcal{T}_{\varepsilon}\nabla g_{\delta, \varepsilon}^{k,i}-\chi^{k,i}\|_{L^p(\Omega\times Q)^d}\leq \delta.
\end{equation*}

We define $w_{\delta,\varepsilon}^k= w+ \sum_{i=1}^{k}\eta^{k,i} g_{\delta,\varepsilon}^{k,i}$ and we estimate
\begin{eqnarray*}
& & \|\unf w_{\delta,\varepsilon}^k-w\|_{L^{\theta}(0,T; L^{\theta}(\Omega\times Q))}+\|\unf \nabla w_{\delta,\varepsilon}^k - (\nabla w + \chi)\|_{L^p(0,T; L^p(\Omega\times Q)^d)}\\ & \leq & \|\sum_{i=1}^{k}\eta^{k,i} g_{\delta
,\varepsilon}^{k,i}\|_{L^{\theta}(0,T; L^{\theta}(\Omega\times Q))}+ \|\sum_{i=1}^{k}\eta^{k,i} \brac{\unf \nabla g_{\delta,\varepsilon}^{k,i} - \chi^{k,i}}\|_{L^p(0,T; L^p(\Omega\times Q)^d)} \\
& & + \norm{\psi^k-\chi}_{L^p(0,T; L^p(\Omega\times Q)^d)}\\
& \leq & \varepsilon \sum_{i=1}^{k}c_{k,i}(\delta) + \sum_{i=1}^{k} c_{k,i} \norm{\unf \nabla g_{\delta,\varepsilon}^{k,i} - \chi^{k,i}}_{L^p(\Omega\times Q)^d} + \norm{\psi^k-\chi}_{L^p(0,T; L^p(\Omega\times Q)^d)}.
\end{eqnarray*}
Letting first $\varepsilon \to 0$, secondly $\delta \to 0$, and finally $k\to \infty$, the right-hand side above vanishes. As a result of this, we can extract diagonal sequences $k(\varepsilon)$ and $\delta(\varepsilon)$ such that $w\e:=w^{k(\varepsilon)}_{\delta(\varepsilon),\varepsilon}$ satisfies the claim of the lemma.
\end{proof}

\begin{lemma}[Measurable selection]\label{lem:128:a}
Let the assumptions of Lemma \ref{lem:709:p} be satisfied. Let $\xi \in  L^2(0,T; Y_0^*)$. There exists $w \in L^p(0,T;L^p_{\mathrm{inv}}(\Omega)\otimes W^{1,p}_0(Q))\cap L^{\theta}(0,T; L^{\theta}_{\mathrm{inv}}(\Omega)\otimes L^{\theta}(Q))$ such that
\begin{equation*}
\int_{0}^T \widetilde{\E}\h^*(t,\xi(t))dt = \int_{0}^{T}\expect{\xi(t),w(t)}_{Y_0^*,Y_0}dt-\int_{0}^{T}\widetilde{\E}\h(t,w(t))dt.
\end{equation*}
Moreover, there exists $\chi\in L^p(0,T; L^p\p(\Omega)\otimes L^p(Q))$ such that
\begin{eqnarray}\label{eq:745}
& & \int_{0}^T \inf_{\chi\in L^p\p(\Omega)\otimes L^p(Q)} \expect{\int_{Q}e^{2 \Lambda t}V(\omega,x,e^{-\Lambda t}\nabla w(t)+\chi)dx} dt \nonumber \\
& = & \int_{0}^T \expect{\int_{Q}e^{2 \Lambda t}V(\omega,x,e^{-\Lambda t}\nabla w(t)+\chi(t))dx}dt. 
\end{eqnarray}
\end{lemma}
\begin{proof}
First we note that $\widetilde{\E}_{\mathrm{hom}}$ is a convex normal integrand by Lemma \ref{lem:709:p} (i) and $\int_{0}^T\widetilde{\E}\h(t,0)dt<\infty$. Therefore, Proposition \ref{prop:appen1} in Appendix~\ref{sec:norm:a} implies that 
\begin{equation}\label{eq:713}
  \begin{aligned}
    &\int_{0}^T \widetilde{\E}_{\mathrm{hom}}^*(t,\xi(t))dt \\
    &=
    \sup_{w\in L^2(0,T;
      Y_0)}\left(\int_{0}^T\expect{\xi(t),w(t)}_{Y_0^*,Y_0}dt-\int_{0}^T
      \widetilde{\E}_{\mathrm{hom}}(t,w(t))dt\right).
  \end{aligned}
\end{equation}
Using the direct method of the calculus of variations, with the help of the growth conditions of $V$ and $f$, we conclude that the supremum on the right-hand side is attained by some $w\in L^2(0,T; Y_0)$. As a result of this, we have $\int_{0}^T\widetilde{\E}\h(t,w(t))dt < \infty$, which implies that $w \in L^p(0,T;L^p_{\mathrm{inv}}(\Omega)\otimes W^{1,p}_0(Q))\cap L^{\theta}(0,T; L^{\theta}_{\mathrm{inv}}(\Omega)\otimes L^{\theta}(Q))$.

To show \eqref{eq:745}, we define an integrand $\mathcal{I}:[0,T]\times \brac{L^p_{\mathrm{pot}}(\Omega)\otimes L^p(Q)}\to \R\cup \cb{\infty}$ by $\mathcal{I}(t,\chi)=e^{2\Lambda t}\expect{\int_{Q}V(\omega,x,e^{-\Lambda t}\nabla w(t)(\omega,x)+\chi(\omega,x)dx}$. We remark that $\mathcal{I}$ is finite everywhere (up to considering a suitable representative of $\nabla w$) and for all $t\in [0,T]$, $\mathcal{I}(t,\cdot)$ is convex and l.s.c. (using the growth conditions of $V$), in fact, $\mathcal{I}(t,\cdot)$ is continuous. Moreover, for each fixed $\chi \in L^p\p(\Omega)\otimes L^p(Q)$, $\mathcal{I}(\cdot,\chi)$ is $\mathcal{L}(0,T)$-measurable. Indeed, this follows by the observation that $\mathcal{I}(\cdot,\chi)$ is a composition of the mappings $g_1:[0,T]\to [0,T]\times L^{p}(\Omega\times Q)^d$, $g_1(t)=\brac{t,e^{-\Lambda t}\nabla w(t)+\chi}$, and $g_2:[0,T]\times L^{p}(\Omega\times Q)^d\to \R$, $g_2(t,\varphi)=e^{2\Lambda t}\expect{\int_{Q}V(\omega,x,\varphi(\omega,x))dx}$. $g_1$ is $\brac{\mathcal{L}(0,T), \mathcal{L}(0,T)\otimes \mathcal{B}(L^p(\Omega\times Q)^d)}$-measurable and $g_2$ is a Carath{\'e}odory integrand and thus $\brac{\mathcal{L}(0,T)\otimes \mathcal{B}(L^p(\Omega\times Q)^d)}$-measurable. The above statements imply that $\mathcal{I}$ is a convex Carath{\'e}odory integrand, thus a normal convex integrand (see Appendix \ref{sec:norm:a}). As a result of this, Proposition \ref{prop:appen1} (and in particular Remark \ref{rem:901:a}) in Appendix~\ref{sec:norm:a} implies that
\begin{equation*}
\int_{0}^T\inf_{\chi\in L^p\p(\Omega)\otimes L^p(Q)}\mathcal{I}(t,\chi)dt = \inf_{\chi\in L^p(0,T; L^p\p(\Omega)\otimes L^p(Q))}\int_{0}^T\mathcal{I}(t,\chi(t))dt.
\end{equation*}
The infimum on the right-hand side is attained at some $\chi\in L^p(0,T; L^p\p(\Omega)\otimes L^p(Q))$, using the direct method of the calculus of variations. This concludes the proof.
\end{proof}

\begin{proof}[Proof of Theorem \ref{s3_thm_5}] 

\textit{Step 1. Compactness.}
The apriori estimate \eqref{eq:295} and the boundedness of $\E\e(y^0\e)$ yield, for all $t\in [0,T]$,
\begin{equation}\label{eq:102:a2}
\norm{y\e(t)}^p_{L^p(\Omega)\otimes W^{1,p}(Q)}+\norm{y\e(t)}^{\theta}_{L^{\theta}(\Omega\times Q)} \leq c. 
\end{equation}
Also, by the isometry property of $\unf$ and since $\theta\geq 2$, the above implies that $\norm{\unf y\e(t)}_{Y}^{\theta}\leq c$. We remark that $\unf y\e \in H^1(0,T;Y)$ since $\dot{\brac{\cdot}}$ and $\unf$ commute, i.e., $\frac{d}{dt}\brac{\unf y\e}=\unf \dot{y}\e$, where on the left-hand side $\unf y\e$ is pointwise defined as $\unf y\e(t)$ and on the right-hand side $\unf$ is the extension defined on $L^2(0,T; Y)$. As a result of this and using the isometry property of $\unf$, the apriori estimate \eqref{eq:323} implies that 
\begin{equation*}
\norm{\unf y\e}_{H^1(0,T;Y)}^2 \leq c, \quad \norm{\unf y\e(t)-\unf y\e(s)}^2_{Y}\leq c |t-s| \quad \text{for all }s,t \in [0,T].
\end{equation*}
We extract a (not relabeled) subsequence and $y\in H^1(0,T; Y)$ such that $\unf y\e \harpoon y$ weakly in $H^1(0,T; Y)$, and this implies that $\unf \dot{y}\e \harpoon \dot{y}$ weakly in $L^2(0,T; Y)$. We apply the Arzel{\`a}-Ascoli theorem to the sequence $\unf y\e$ to obtain that (up to another subsequence) for all $t\in [0,T]$,
\begin{equation}\label{eq:209:a2}
\unf y\e(t) \harpoon y(t) \quad \text{weakly in }Y. 
\end{equation}
Using (\ref{eq:102:a2}) and Proposition \ref{prop1}, we conclude that $y(t)\in (L^p_{\mathrm{inv}}(\Omega)\otimes W^{1,p}_0(Q))\cap \brac{L^{\theta}_{\mathrm{inv}}(\Omega)\otimes L^{\theta}(Q)}$ and $\unf y\e(t)\harpoon y(t)$ weakly in $L^{\theta}(\Omega\times Q)$ and in $L^p(\Omega\times Q)$ (see also Remark \ref{R_Two_1}). This also implies that $y\in H^1(0,T; Y_0)$. Moreover, for each $t\in [0,T]$ we find $\chi(t)\in L^p_{\mathrm{pot}}(\Omega)\otimes L^p(Q)$ and a subsequence $\varepsilon(t)$ such that $\mathcal{T}_{\varepsilon(t)}\nabla y_{\varepsilon(t)}(t) \harpoon \nabla y(t)+\chi(t)$ weakly in $L^p(\Omega\times Q)^d$. This implies that $P_{\mathrm{inv}}\nabla y\e(t) \harpoon \nabla y(t)$ weakly in $L^p(\Omega\times Q)^d$ for the whole (sub)sequence $\varepsilon$. Note that the assumption on the initial data implies that $\unf y\e(0)\to y^0$ strongly in $Y$ and hence we have $y(0)=y^0$.

In the following step, using Lemma \ref{lem:709:p}, we restate \eqref{eq:317} as a convex problem. For this reason, we define the new variables $u\e(t)=e^{\Lambda t}y\e(t)$ and $u(t)=e^{\Lambda t}y(t)$. Note that $\dot{u}\e(t)=\Lambda e^{\Lambda t} y\e(t)+e^{\Lambda t}\dot{y}\e(t)$ and analogously for $\dot{u}$. The above convergence statements result in 
\begin{align}\label{eq:117:a2}
\begin{split}
& \unf u\e \harpoon u \quad \text{weakly in }H^1(0,T; Y),\\
& \unf u\e(t) \harpoon u(t) \quad \text{weakly in }L^{\theta}(\Omega\times Q) \text{ and }L^p(\Omega\times Q), \quad \text{for all }t \in [0,T].
\end{split}
\end{align}

\textit{Step 2. Reduction to a convex problem.} In view of Lemma \ref{lem:709:p} (ii), we have
\begin{equation}\label{eq:129:a2}
\rcal\e(u\e(T))+ \int_{0}^T \widetilde{\E}\e(t,u\e(t))+\widetilde{\E}\e^*(t,-D\rcal\e(\dot{u}\e(t)))dt = \rcal\e(u\e(0)).
\end{equation}

\textit{Step 3. Passage to the limit $\varepsilon\to 0$ in (\ref{eq:129:a2}).} Note that $u\e(0) =y\e^0\overset{2}{\to} y^0 = u(0)$ in $Y$ and therefore using Proposition \ref{P_Cont_1} (ii), for the right-hand side of (\ref{eq:129:a2}), we have
\begin{equation}\label{eq:233:2a}
\lim_{\varepsilon\to 0}\rcal\e(u\e(0))=\rcal\h(u(0)).
\end{equation}

The first term on the left-hand side is treated similarly, using Proposition \ref{P_Cont_1} (iii) and (\ref{eq:117:a2}), we have 
\begin{equation}\label{eq:237:2a}
\liminf_{\varepsilon\to 0}\rcal\e(u\e(T))\geq \rcal\h(u(T)).
\end{equation}

We treat the second term on the left-hand side of (\ref{eq:129:a2}) as follows. By Fatou's lemma we have 
\begin{eqnarray*}
&& \liminf_{\varepsilon\to 0}\int_{0}^T\widetilde{\E}\e(t,u\e(t)) dt \\  & \geq &  \int_0^T \liminf_{\varepsilon\to 0}\expect{\int_{Q}e^{2 \Lambda t}V(\tau_{\frac{x}{\varepsilon}}\omega,x,e^{-\Lambda t}\nabla u\e(t))dx}dt\\ & & +\int_{0}^T \liminf_{\varepsilon\to 0}\expect{\int_{Q}e^{2\Lambda t}f(\tau_{\frac{x}{\varepsilon}}\omega,x,e^{-\Lambda t}u\e(t))-\frac{\Lambda}{2}r(\tau_{\frac{x}{\varepsilon}}\omega,x)|u\e(t)|^2 dx}dt.
\end{eqnarray*}
For fixed $t$, the $\liminf$ in the first term is a limit for a subsequence $\varepsilon(t)$ and as in Step 1 we find $\chi(t)\in L^p\p(\Omega)\otimes L^p(Q)$ such that, up to another (not relabeled) subsequence, it holds $\nabla u_{\varepsilon(t)}(t) \overset{2}{\harpoon} \nabla u(t)+e^{\Lambda t}\chi(t)$ in $L^p(\Omega\times Q)^d$. Also, we notice that $e^{2\Lambda t}V(\omega,x,e^{-\Lambda t}\cdot)$ is convex and has $p$-growth properties and therefore Proposition~\ref{P_Cont_1}~(iii) implies that
\begin{eqnarray*}
& & \liminf_{\varepsilon \to 0}\expect{\int_{Q}e^{2 \Lambda t}V(\tau_{\frac{x}{\varepsilon}}\omega,x,e^{-\Lambda t}\nabla u\e(t))dx} \\ & \geq & \expect{\int_{Q}e^{2 \Lambda t}V(\omega,x,e^{-\Lambda t}\nabla u(t)+\chi(t))dx}\\
& \geq & \inf_{\chi\in L^p\p(\Omega)\otimes L^p(Q)}\expect{\int_{Q}e^{2\Lambda t}V(\omega,x,e^{-\Lambda t}\nabla u(t)+\chi)dx}. 
\end{eqnarray*}
On the other hand, we remark that the integrand $e^{2\Lambda t}f(\omega,x,e^{-\Lambda t}\cdot)-\frac{\Lambda}{2}r(\omega,x)|\cdot|^2$ is convex and satisfies $\theta$-growth conditions. As a result of this and by (\ref{eq:117:a2}), Proposition~\ref{P_Cont_1}~(iii) yields
\begin{eqnarray*}
& & \liminf_{\varepsilon\to 0}\expect{\int_{Q}e^{2\Lambda t}f(\tau_{\frac{x}{\varepsilon}}\omega,x,e^{-\Lambda t}u\e(t))-\frac{\Lambda}{2}r(\tau_{\frac{x}{\varepsilon}}\omega,x)|u\e(t)|^2 dx} \\ & \geq & \expect{\int_{Q}e^{2\Lambda t}f(\omega,x,e^{-\Lambda t}u(t))-\frac{\Lambda}{2}r(\omega,x)|u(t)|^2 dx}.
\end{eqnarray*}
Using the above two statements we conclude that
\begin{equation}\label{eq:254:2a}
\liminf_{\varepsilon\to 0}\int_{0}^T\widetilde{\E}\e(t,u\e(t))dt \geq \int_{0}^T \widetilde{\E}\h(t,u(t))dt. 
\end{equation}

In order to complete the limit passage, it is left to treat the third term on the left-hand side of (\ref{eq:129:a2}). Using Lemma \ref{lem:128:a}, we find $w \in L^p(0,T;L^p_{\mathrm{inv}}(\Omega)\otimes W^{1,p}_0(Q))\cap L^{\theta}(0,T; L^{\theta}_{\mathrm{inv}}(\Omega)\otimes L^{\theta}(Q))$ such that
\begin{equation*}
\int_{0}^T \widetilde{\E}\h^*(t,-D\rcal\h(\dot{u}(t)))dt = \int_{0}^{T}\expect{-D\rcal\h(\dot{u}(t)),w(t)}_{Y_0^*,Y_0}-\widetilde{\E}\h(t,w(t))dt.
\end{equation*}
Moreover, by the second claim of Lemma \ref{lem:128:a}, we find $\chi\in L^p(0,T; L^p\p(\Omega)\otimes L^p(Q))$ such that
\begin{align}\label{eq:536:ac2}
\int_0^T\widetilde{\E}\h(t,w(t))dt = & \int_{0}^T e^{2 \Lambda t}\expect{\int_{Q}V(\omega,x,e^{-\Lambda t}\nabla w(t)+\chi(t))+f(\omega,x,e^{-\Lambda t}w(t))} \nonumber \\ & -\Lambda \rcal\h(w(t))dt. 
\end{align}
For the pair $\brac{w,e^{\Lambda \cdot}\chi(\cdot)}$ ($e^{\Lambda\cdot}$ denotes the function $t\mapsto e^{\Lambda t}$) Lemma \ref{lem:99:a2} implies the existence of $w\e \in L^p(0,T; L^p(\Omega)\otimes W^{1,p}_0(Q)) \cap L^{\theta}(0,T; L^{\theta}(\Omega\times Q))$ such that
\begin{align}\label{eq:270:a2}
\begin{split}
& \unf w\e \to w \quad \text{strongly in } L^{\theta}(0,T; L^{\theta}(\Omega\times Q)),\\
& \unf \nabla w\e \to \nabla w+e^{\Lambda \cdot}\chi \quad \text{strongly in }L^p(0,T; L^p(\Omega\times Q)^d).
\end{split}
\end{align}
Using the definition of the convex conjugate $\widetilde{\E}\e^*$, we have
\begin{equation*}
\int_{0}^T \widetilde{\E}\e^*(t,-D\rcal\e(\dot{u}\e(t)))dt \geq \int_{0}^T \expect{-D\rcal\e(\dot{u}\e(t)),w\e(t)}_{Y^*,Y}-\widetilde{\E}\e(t,w\e(t))dt.
\end{equation*}
For the first term on the right-hand side we have, using the fact that the extended unfolding operator is unitary, as $\varepsilon\to 0$,
\begin{align}
& \int_{0}^T\expect{-D\rcal\e(\dot{u}\e(t)),w\e(t)}_{Y^*,Y}dt=-\int_{0}^T \expect{\int_{Q}r(\omega,x)\unf \dot{u}\e(t) \unf w\e(t)dx}dt \label{term:280}\\ & \to -\int_0^T \expect{\int_{Q}r(\omega,x)\dot{u}(t)w(t)dx}dt=\int_{0}^T \expect{-D\rcal\h(\dot{u}(t)),w(t)}_{Y^*_0,Y_0}dt.\nonumber
\end{align}
The above convergence follows since (\ref{term:280}) is a scalar product of a strongly and a weakly convergent sequence. Moreover, by Proposition \ref{P_Cont_1} (i),
\begin{eqnarray*}
& & \int_{0}^T \widetilde{\E}\e(t,w\e(t))dt \\ & = & \int_{0}^T e^{2\Lambda t}\expect{\int_{Q}V(\omega,x,e^{-\Lambda t}\unf \nabla w\e(t))+f(\omega,x,e^{-\Lambda t}\unf w\e(t))dx} dt \\ & & - \int_0^T \expect{ \int_{Q} \frac{\Lambda r}{2 }|\unf w\e(t)|^2 dx}dt.
\end{eqnarray*}
As $\varepsilon \to 0$, this expression converges to 
\begin{equation*}
\int_{0}^T e^{2\Lambda t}\expect{\int_{Q}V(\omega,x,e^{-\Lambda t}\nabla w(t)+\chi(t))+f(\omega,x,e^{-\Lambda t}w(t))- \frac{\Lambda r}{2 e^{2\Lambda t}}|w(t)|^2 dx}dt.
\end{equation*}
This follows completely analogously as in the proof of Proposition \ref{P_Cont_1} (ii) using the strong convergences (\ref{eq:270:a2}) and the growth conditions of the integrands (standard argument using Fatou's lemma). By (\ref{eq:536:ac2}), the last expression equals $\int_0^T\widetilde{\E}\h(t,w(t))dt$ and therefore collecting the above statements we conclude that
\begin{equation}\label{eq:292:a2}
\liminf_{\varepsilon\to 0}\int_{0}^T \widetilde{\E}\e^*(t,-D\rcal\e(\dot{u}\e(t)))dt \geq \int_{0}^T \widetilde{\E}\h^*(t,-D\rcal\h(\dot{u}(t)))dt.
\end{equation}

Collecting (\ref{eq:233:2a}), (\ref{eq:237:2a}), (\ref{eq:254:2a}) and (\ref{eq:292:a2}), we obtain that  
\begin{eqnarray*}
& & \int_{0}^T \widetilde{\E}_{\mathrm{hom}}(t,u(t))+\widetilde{\E}^*_{\mathrm{hom}}(t,-D\mathcal{R}_{\mathrm{hom}}(\dot{u}(t))) dt\\ & \leq & -\mathcal{R}_{\mathrm{hom}}(u(T))+\mathcal{R}_{\mathrm{hom}}(u(0))=\int_{0}^T \expect{-D\mathcal{R}_{\mathrm{hom}}(\dot{u}(t)),u(t)}_{Y_0^*,Y_0}dt.
\end{eqnarray*}
This inequality is, in fact, an equality by the Fenchel-Young inequality. Since $u(t)=e^{\Lambda t}y(t)$, Lemma \ref{lem:709:p} (iii) implies that $y$ is the unique solution to \eqref{eq:318} with $y(0)=y^0$.
Furthermore, using (\ref{eq:233:2a}) and (\ref{eq:237:2a}) we obtain
\begin{equation*}
\limsup_{\varepsilon\to 0} \brac{-\mathcal{R}\e(u\e(T))+\mathcal{R}\e(u\e(0))}\leq -\mathcal{R}_{\mathrm{hom}}(u(T))+\mathcal{R}_{\mathrm{hom}}(u(0)).
\end{equation*}
Also, exploiting the equality (\ref{eq:129:a2}) and the liminf inequalities (\ref{eq:254:2a}), (\ref{eq:292:a2}), we obtain
\begin{align*}
\liminf_{\varepsilon\to 0}\brac{-\mathcal{R}\e(u\e(T))+\mathcal{R}\e(u\e(0))} & \geq \int_{0}^T \widetilde{\E}_{\mathrm{hom}}(t,u(t))+\widetilde{\E}^*_{\mathrm{hom}}(t,-D\mathcal{R}_{\mathrm{hom}}(\dot{u}(t))) dt\\
& = - \rcal\h(u(T))+\rcal\h(u(0)).
\end{align*} 
This results in
\begin{equation*}
\lim_{\varepsilon\to 0}\frac{e^{2\Lambda T}}{2}\expect{\int_{Q}r(\omega,x)|\unf y\e(T)|^2 dx}= \lim_{\varepsilon\to 0} \rcal\e(u\e(T)) =\rcal\h(u(T)),
\end{equation*}
where we use that $\rcal\e(u\e(0))$ converges to $\rcal\h(u(0))$. Moreover, we note that $\rcal\h(u(T))=\frac{e^{2\Lambda T}}{2}\expect{\int_{Q}r(\omega,x)|y(T)|^2 dx}$; therefore, the above and (\ref{eq:209:a2}) imply that $\unf y\e(T)\to y(T)$ strongly in $Y$. Since $\unf y(T)=y(T)$ by shift-invariance of $y(T)$, we obtain that $y\e(T)\to y(T)$ strongly in $Y$. We may replace $T$ by any $t\in (0,T]$ in the above procedure to obtain $y\e(t)\to y(t)$ strongly in $Y$. Convergence for the entire sequence is obtained by a standard contradiction argument using the uniqueness of the solution for the limit problem.

\textit{Step 4. Convergence of $\dot{y}\e$ and $\E\e(y\e(t))$.} The EVI \eqref{eq:317} is equivalent to the differential inclusion (cf. \eqref{eq:145} in the Introduction)
\begin{equation*}
0 \in D\rcal\e(\dot{y}\e(t))+ \partial_{F}\E\e(y\e(t)) \quad \text{for a.e. } t \in (0,T).
\end{equation*}
This and the chain rule for the $\Lambda$-convex functional $\E\e$ (see, e.g., \cite{rossi2006gradient}) imply that $\frac{d}{dt}\E\e(y\e(t))=-\expect{D\rcal\e(\dot{y}\e(t)),\dot{y}\e}_{Y^*,Y}$. An integration over $(0,t)$, for an arbitrary $t\in (0,T]$, yields
\begin{equation*}
\int_{0}^t \expect{D\mathcal{R}\e(\dot{y}\e(s)),\dot{y}\e(s)}_{Y^{*},Y}ds = \E\e(y\e(0))-\E\e(y\e(t)). 
\end{equation*}
Since $y\e(t)\to y(t)$ strongly in $Y$ and by (\ref{eq:117:a2}), we obtain that $\liminf_{\varepsilon\to 0}\E\e(y\e(t))\geq \E_{\mathrm{hom}}(y(t))$, which follows using Proposition \ref{P_Cont_1} (cf. \eqref{eq:254:2a}). As a consequence, using the additional assumption $\E\e(y\e(0))\to \E_{\mathrm{hom}}(y(0))$, we obtain 
\begin{align*}
\limsup_{\varepsilon\to 0}\int_{0}^t \expect{D\mathcal{R}\e(\dot{y}\e(s)),\dot{y}\e(s)}_{Y^{*},Y}ds & \leq \E_{\mathrm{hom}}(y(0))-\E_{\mathrm{hom}}(y(t)) \\ & = \int_{0}^t \expect{D\mathcal{R}_{\mathrm{hom}}(\dot{y}(s)),\dot{y}(s)}_{Y^{*}_0,Y_0}ds, 
\end{align*}
where in the last equality we use that $y$ is the solution to the limit problem. Note that it holds $\int_{0}^t\expect{D\mathcal{R}\e(\dot{y}\e(s)),\dot{y}\e(s)}_{Y^{*},Y}ds = \int_0^t\expect{\int_{Q} r |\unf \dot{y}\e(s)|^2 dx}ds$ and since $\unf \dot{y}\e \rightharpoonup \dot{y}$ weakly in $L^2(0,T; Y)$, it follows that
\begin{equation*}
\liminf_{\varepsilon\to 0} \int_{0}^t\expect{D\mathcal{R}\e(\dot{y}\e(s)),\dot{y}\e(s)}_{Y^{*},Y}ds \geq \int_{0}^t \expect{D\mathcal{R}_{\mathrm{hom}}(\dot{y}(s)),\dot{y}(s)}_{Y^{*}_0,Y_0}ds.
\end{equation*}
Combining the last two inequalities and the weak convergence $\unf \dot{y}\e \rightharpoonup \dot{y}$, we conclude that for all $t\in(0,T]$,
\begin{equation*}
\dot{y}\e \to \dot{y} \quad \text{strongly in }L^2(0,t; Y), \quad \E\e(y\e(t))\to \E_{\mathrm{hom}}(y(t)).
\end{equation*} 
\end{proof}
\appendix
\section{Normal integrands and integral functionals}\label{sec:norm:a}
In the following we recall some key facts about measurable integrands and conjugates of integral functionals. A detailed and more general theory can be found in \cite{rockafellar1971convex}.

Let $(S, \Sigma, \mu)$ be a complete measure space with a $\sigma$-finite measure $\mu$ and let $X$ be a separable reflexive Banach space with dual space $X^*$. The product-$\sigma$-algebra of $\Sigma$ and $\mathcal{B}(X)$ (Borel $\sigma$-algebra on $X$) is denoted by $\Sigma\otimes \mathcal{B}(X)$. In the following we refer to a function $f: S \times X \to \R\cup \cb{\infty}$ as an \textit{integrand}. For $s\in S$, we denote the function $x\mapsto f(s,x)$ by $f_s$.
\begin{definition}[Normal integrand]\label{def:appen}
We say that an integrand $f$ is \textit{normal} if the following two conditions hold:
\begin{enumerate}[label=(\roman*)] 
\item $f$ is $\Sigma \otimes \mathcal B(X)$-measurable.
\item For each $s \in S$, the function $f_s$ is proper and l.s.c.
\end{enumerate}
If additionally, for each $s\in S$, $f_s$ is convex, we say that $f$ is a \textit{convex normal integrand}.
\end{definition}
Note that if $f$ is a normal integrand and $x:S\to X$ is a $\brac{\Sigma,\mathcal{B}(X)}$-measurable function, then $s\mapsto f(s, x(s))$ defines a $\Sigma$-measurable mapping.
\begin{remark}[Carath{\'e}odory integrand]\label{rem:appen}
We call an integrand $f$ \textit{Carath{\'e}odory} if $f$ is finite everywhere, $f(\cdot,x)$ is $\Sigma$-measurable for all $x\in X$, and  $f(s,\cdot)$ is continuous for all $s\in S$. If an integrand is Carath{\'e}odory, then it is normal (for the proof see, e.g., \cite[Lemma 4.51]{aliprantis1999infinite}). 
\end{remark}

Let $f$ be a normal integrand. We define $f^*: S \times X^*\to \R\cup \cb{\infty}$ to be the convex conjugate of $f$ in its second variable, i.e., $f^*(s,\xi)=f^*_s(\xi)$ is defined by
\begin{equation*}
  f^*_s(\xi)=\sup_{x\in X}\brac{\expect{\xi,x}_{X^*,X}-f_s(x)}.
\end{equation*}
\begin{proposition}[{\cite[Proposition 2]{rockafellar1971convex}}]
Let $f$ be a normal integrand. If for each $s\in S$, $f^*_s$ is proper (this is true if, e.g., $f\geq -c$ for some $c>0$), then $f^*$ is a convex normal integrand. If $f$ is a convex normal integrand, then $\brac{f^{*}}^*=f$.
\end{proposition}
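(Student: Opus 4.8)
The proposition has two parts. In the first, for a normal integrand $f$ with each $f^*_s$ proper, we must show that $f^*$ is a convex normal integrand; in the second, that $(f^*)^*=f$ whenever $f$ is already a convex normal integrand. Slicewise convexity and lower semicontinuity of $f^*_s$ come for free: $\xi\mapsto\langle\xi,x\rangle-f_s(x)$ is affine and continuous for each $x\in\mathrm{dom}f_s$, and $f^*_s$ is the pointwise supremum of this family, hence convex and lower semicontinuous on $X^*$. Together with the hypothesis that $f^*_s$ is proper, the only real content of the first part is the joint $\Sigma\otimes\mathcal B(X^*)$-measurability of $(s,\xi)\mapsto f^*(s,\xi)$. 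The second part then follows by applying the first part to see that $f^*$ is again a convex normal integrand, and invoking Fenchel--Moreau on each slice.

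\textbf{Joint measurability of $f^*$.} The plan is to rewrite the supremum defining $f^*_s$ as a countable supremum of jointly measurable functions. Since $X$ is separable and $f$ is normal, the epigraph multifunction $s\mapsto\mathrm{epi}f_s\subset X\times\R$ is closed-valued and $\Sigma$-measurable (a standard reformulation of normality, cf. \cite{rockafellar1971convex}). By the Castaing representation theorem there are countably many $\Sigma$-measurable maps $(u_n,\alpha_n):S\to X\times\R$, with $(u_n(s),\alpha_n(s))\in\mathrm{epi}f_s$, such that $\{(u_n(s),\alpha_n(s)):n\in\N\}$ is dense in $\mathrm{epi}f_s$ for every $s$. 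Because $(x,\alpha)\mapsto\langle\xi,x\rangle-\alpha$ is continuous on $X\times\R$ for each fixed $\xi$, and the largest $\alpha$ admissible over a point $x$ of $\mathrm{epi}f_s$ is $\alpha=f_s(x)$, one obtains
\begin{equation*}
 f^*_s(\xi)=\sup_{(x,\alpha)\in\mathrm{epi}f_s}\bigl(\langle\xi,x\rangle-\alpha\bigr)=\sup_{n\in\N}\bigl(\langle\xi,u_n(s)\rangle-\alpha_n(s)\bigr).
\end{equation*}
Each map $(s,\xi)\mapsto\langle\xi,u_n(s)\rangle-\alpha_n(s)$ is $\Sigma\otimes\mathcal B(X^*)$-measurable: $u_n$ and $\alpha_n$ are $\Sigma$-measurable, the duality pairing is norm-continuous on $X^*\times X$, and separability of $X$ and of $X^*$ (the latter because $X$ is separable and reflexive) gives $\mathcal B(X^*\times X)=\mathcal B(X^*)\otimes\mathcal B(X)$. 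A countable supremum of measurable functions is measurable, so $f^*$ is $\Sigma\otimes\mathcal B(X^*)$-measurable, and with the slicewise properties noted above it is a convex normal integrand.

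\textbf{Properness and the biconjugate identity.} For the parenthetical sufficient condition: if $f\geq-c$ then, picking $x_0\in\mathrm{dom}f_s$ (nonempty since $f_s$ is proper), we have $f^*_s(\xi)\geq\langle\xi,x_0\rangle-f_s(x_0)>-\infty$ for all $\xi$ and $f^*_s(0)=-\inf_x f_s(x)\leq c<\infty$, so $f^*_s$ is proper. Now suppose $f$ is a convex normal integrand. Each $f_s$ is proper, convex and lower semicontinuous, hence has a continuous affine minorant by Hahn--Banach, so $f^*_s$ is proper; by the first part $f^*$ is a convex normal integrand, and a second application shows $(f^*)^*$ is one as well. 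Finally the Fenchel--Moreau theorem, applied slicewise to the proper convex lower semicontinuous function $f_s$ on the reflexive space $X$, yields $(f^*_s)^*=f_s$ for every $s$, i.e.\ $(f^*)^*=f$.

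\textbf{Main obstacle.} The pointwise convex analysis (convexity, lower semicontinuity, properness, Fenchel--Moreau) is routine; the delicate step is the joint measurability, specifically the construction of a countable, jointly measurable family of epigraph points that is slicewise dense. This is precisely where normality of $f$ (equivalently, measurability of the epigraph multifunction) and separability of $X$ are indispensable: without them the reduction to a countable supremum breaks down.
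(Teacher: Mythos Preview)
The paper does not supply its own proof of this proposition; it is quoted as \cite[Proposition 2]{rockafellar1971convex} and used as a black box in the appendix. Your argument is correct and is essentially the standard proof one finds in Rockafellar's original paper: reduce joint measurability of $f^*$ to a countable supremum via a Castaing representation of the epigraph multifunction, and obtain the biconjugate identity by slicewise Fenchel--Moreau. The identification $f^*_s(\xi)=\sup_{(x,\alpha)\in\mathrm{epi}\,f_s}(\langle\xi,x\rangle-\alpha)$ and the subsequent passage to the dense countable family are carried out cleanly, and your verification of properness (both the parenthetical $f\geq -c$ case and the convex case via an affine minorant) is accurate.

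One minor remark: you invoke the equivalence between the definition of normality used in the paper ($\Sigma\otimes\mathcal B(X)$-measurability of $f$ together with slicewise properness and lower semicontinuity) and measurability of the epigraph multifunction $s\mapsto\mathrm{epi}\,f_s$. This equivalence is indeed standard under the standing hypotheses (complete $\sigma$-finite $(S,\Sigma,\mu)$, separable $X$), but since the paper adopts the former definition, a one-line pointer to where this equivalence is established (e.g.\ Rockafellar--Wets or the same 1971 paper) would make the argument fully self-contained relative to the paper's conventions.
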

Let $p\in (1,\infty)$ and $q = \frac{p}{p-1}$ be its dual exponent of integrability. Since $\mu$ is $\sigma$-finite, we may identify $L^p(S;X)^*$ with $L^q(S;X^*)$ (see \cite[Theorem 1.5]{showalter2013monotone}). For a given normal integrand $f$, we define an integral functional $I_f: L^p(S;X)\to \R \cup \cb{\pm \infty}$ by 
\begin{equation*}
  I_f(x)= \int_{S}f(s,x(s))d\mu(s),
\end{equation*}
if $s\mapsto f(s,x(s))$ is integrable and otherwise we set $I_f$ to be $+\infty$. Analogously, we define $I_{f^*}:L^q(S; X^*)\to \R \cup \cb{\pm \infty}$. 
\begin{proposition}[{\cite[Theorem 2]{rockafellar1971convex}}]\label{prop:appen1} Let $p \in (1,\infty)$, $q=\frac{p}{p-1}$. Let $f$ be a normal integrand. If there is an element $x\in L^p(S; X)$ such that $I_{f}(x)<\infty$, then for all $\xi \in L^q(S; X^*)$, it holds
\begin{equation}\label{eq:111:a}
I_{f^*}(\xi)= \sup_{x \in L^p(S; X)}\brac{\int_S\expect{\xi(s),x(s)}_{X^*,X}d\mu(s)- I_{f}(x)}.
\end{equation}
\end{proposition}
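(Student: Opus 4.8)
This is precisely the assertion that the convex conjugate of the integral functional $I_f$ on $L^p(S;X)$, computed with respect to the pairing with $L^q(S;X^*)$, is again an integral functional, namely $I_{f^*}$. The plan is to establish the two inequalities separately, and only one of them has content. The inequality $I_{f^*}(\xi)\ge\sup_x\big(\int_S\langle\xi(s),x(s)\rangle_{X^*,X}\,d\mu(s)-I_f(x)\big)$ is immediate from the pointwise Fenchel inequality $f^*(s,\xi(s))\ge\langle\xi(s),x(s)\rangle_{X^*,X}-f(s,x(s))$: if $I_f(x)=+\infty$ there is nothing to prove, and otherwise $s\mapsto f(s,x(s))$ is measurable and nowhere $-\infty$ (normality forces $f_s$ proper) while $s\mapsto\langle\xi(s),x(s)\rangle_{X^*,X}$ is integrable by Hölder, so integrating and rearranging, then taking the supremum over $x\in L^p(S;X)$, gives the claim (with the usual conventions for extended-real integrals).

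\textbf{The substantial inequality.} It remains to show $I_{f^*}(\xi)\le\sigma$, where $\sigma$ denotes the supremum on the right-hand side. I would first rewrite the problem as an interchange of infimum and integral: the integrand $g(s,v):=f(s,v)-\langle\xi(s),v\rangle_{X^*,X}$ is a continuous perturbation of $f$, hence still a normal integrand with $I_g(v_0)<\infty$ for the reference point $v_0$ supplied by the hypothesis, and, writing $m(s):=\inf_{v\in X}g(s,v)=-f^*(s,\xi(s))$, the asserted identity is equivalent to
\[
  \inf_{x\in L^p(S;X)}\int_S g(s,x(s))\,d\mu(s)\;=\;\int_S m(s)\,d\mu(s).
\]
The inequality ``$\ge$'' here is trivial (pointwise $g(s,x(s))\ge m(s)$), so everything rests on producing, for each $\varepsilon>0$, an $x_\varepsilon\in L^p(S;X)$ with $\int_S g(s,x_\varepsilon(s))\,d\mu\le\int_S m(s)\,d\mu+\varepsilon$ (and, on $\{m=-\infty\}$, one where the integral can be driven to $-\infty$). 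Two standard measure-theoretic tools do the job. First, $m$ is $\Sigma$-measurable: for $c\in\R$ the set $\{(s,v)\in S\times X:g(s,v)<c\}$ is $\Sigma\otimes\mathcal B(X)$-measurable because $g$ is a normal integrand, and $\{m<c\}$ is its projection onto $S$, which is $\Sigma$-measurable by the von Neumann projection theorem --- here one uses that $X$ is Polish and $(S,\Sigma,\mu)$ complete. Second, the Aumann--von Neumann measurable selection theorem yields $\Sigma$-measurable selections out of prescribed measurable graphs. Concretely, fix a $\sigma$-finite exhaustion $S=\bigcup_jS_j$ with $\mu(S_j)<\infty$, on $S_j$ intersect the graph $\{(s,v):g(s,v)<m(s)+\varepsilon 2^{-j}\}$ (resp.\ $\{(s,v):g(s,v)<-j\}$ over $\{m=-\infty\}$) with the ball $\{v\in X:\|v\|_X\le k\}$, select $x$ there for $k$ so large that a selection exists off an arbitrarily small part of $S_j$, and define $x_\varepsilon$ to equal $v_0$ on that leftover set and outside $\bigcup_jS_j$. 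Then $x_\varepsilon$ is bounded on a set of finite measure and agrees with $v_0\in L^p(S;X)$ elsewhere, so $x_\varepsilon\in L^p(S;X)$; letting the truncation parameters tend to infinity, with $g(\cdot,v_0(\cdot))$ serving as an integrable envelope on the discarded part, monotone/dominated convergence gives the desired estimate, and $\varepsilon\downarrow0$ concludes. On $\{m=-\infty\}$ one instead obtains $\int_S m\,d\mu=-\infty$, i.e.\ $I_{f^*}(\xi)=+\infty$, and the same construction shows $\sigma=+\infty$.

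\textbf{Main obstacle.} The one genuinely delicate point is forcing the near-optimal measurable selection to lie in $L^p(S;X)$ without losing control of the cost this introduces; this is exactly what the simultaneous truncation in $\|v\|_X$ and exhaustion of the base space, combined with the hypothesis $I_f(v_0)<\infty$ (which provides a ``safe'' default value and an integrable control on the set where truncation is active), are designed to handle. In all uses of this proposition in the present paper the relevant integrand also satisfies a lower bound $f\ge-c$, so $f^*_s$ is automatically proper, negative parts are trivially integrable, and these technicalities essentially evaporate. Since the statement is \cite[Theorem~2]{rockafellar1971convex}, one may of course also simply invoke it.
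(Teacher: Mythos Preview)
The paper does not supply its own proof of this proposition; it is stated in the appendix and attributed directly to \cite[Theorem~2]{rockafellar1971convex}. Your proposal is correct and is essentially the standard argument (and indeed the one Rockafellar gives): the Fenchel--Young inequality yields one direction, and the other is reduced to an interchange of infimum and integral, handled by combining measurability of the pointwise infimum (projection theorem, using completeness of $(S,\Sigma,\mu)$ and separability of $X$) with the Aumann--von~Neumann measurable selection theorem, followed by a truncation/exhaustion step to force the selection into $L^p(S;X)$ using the reference point $v_0$ with $I_f(v_0)<\infty$. Since the paper simply invokes the cited result, there is nothing further to compare.
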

\begin{remark}[Measurable selection]\label{rem:901:a}
The above theorem implies a measurable selection principle for parametrized minimization problems. Namely, setting $\xi=0$ above, we have
\begin{equation*}
\int_{S} \inf_{x\in X}f(s,x)d\mu(s)= \inf_{x \in L^p(S;X)}\int_{S} f(s,x(s))d\mu(s).
\end{equation*}
In particular, if the minimum on the right-hand side is attained, the latter equality implies that there exists a $\brac{\Sigma,\mathcal{B}(X)}$-measurable function $x:S\to X$ such that $\inf_{x\in X}f(s,x)=f(s,x(s))$ $\mu$-a.e. 
\end{remark}

\section*{Acknowledgments} The authors thank Alexander Mielke and Goro Akagi for useful discussions and valuable comments. MH has been funded by Deutsche Forschungsgemeinschaft (DFG) through grant CRC 1114 ``Scaling Cascades
in Complex Systems'', Project C05 ``Effective models for materials and interfaces with many scales''. SN and MV acknowledge funding by the Deutsche Forschungsgemeinschaft (DFG, German Research Foundation) -- project number 405009441, and in the context of TU Dresden's Institutional Strategy \textit{``The Synergetic University''}.

\bibliographystyle{abbrv}
\bibliography{ref3}

\end{document}